\def\section{%
  \@startsection{section}{1}
    {\z@}
    {2.0ex plus 0.8ex minus .1ex}
    {1.0ex plus .2ex}
    {\large\bfseries\boldmath\centering\MakeTextUppercase}%
}
\newcommand{\subsub}{\subset \subset}
\newcommand{\up}[1]{{ {}^{#1} } }
\newcommand{\gradh} { {{}^h \nabla }}
\newcommand{\gradg}{\nabla}
\newcommand{\gradhath} { {{}^{\hat h} \nabla }}
\newcommand{\la}{\lambda}
\def\curlS{\mathcal S}
\def\curlX{\mathcal X}
\def\curlN{\mathcal N}
\def\curlt{\mathfrak t}
\newcommand{\E}{ \mathbb E}
\def\curlM{\mathcal M}
\def\curlR{\mathcal R}
\def\curlP{\mathcal P}
\def\curlT{\mathcal T}
\def\curlC{\mathcal C}
\def\grad{\nabla}
\newcommand{\si}{\sigma}
\newcommand{\downto}{\searrow}
\newcommand{\ga}{\gamma}
\newcommand{\ep}{\epsilon}
\newcommand{\al}{\alpha}
\newcommand{\vol}{V}
\newcommand{\lap}{\Delta}
\newcommand{\de}{\delta}
\newcommand{\be}{\beta}
\newcommand{\ti}{\tilde}
\def\of{\circ}
\newcommand{\partt}{\frac{\partial}{\partial t}}
\def\Sc{{\rm  R}}
\def\Rm{{\rm  Rm}}
\def\si{\sigma}
\def\Riem{{\rm  Rm}}
\def\Ric{{\rm Ric}}
\def\Rc{{\rm Ric}}
\def\Ricci{{\rm Ric}}
\newcommand{\N}{ \mathbb N}
\newcommand{\R}{ \mathbb R}
\newcommand{\B}{ \mathbb B}
\newtheorem{thm}{Theorem}[section]
\newtheorem{lem}[thm]{Lemma}
\newtheorem{rem}[thm]{Remark}
\newtheorem{defi}[thm]{Definition}
\numberwithin{equation}{section}
\begin{document}
\title[Preserving curvature lower bounds under Ricci flow of non-smooth manifolds]{Preserving curvature lower bounds when  Ricci flowing   non-smooth initial data }
\author[Miles Simon]{Miles Simon}

\address{Miles Simon\\ Institut f\"ur Analysis und Numerik\\ Otto-von-Guericke-Universit\"at Magdeburg\\ universit\"atsplatz 2\\ Magdeburg 39106\\ Germany\\} \email{miles.simon@ovgu.de}
\subjclass[2010]{53C44}
\keywords{Ricci flow,\ K\"ahler-Ricci flow}
\thanks{The author is  supported by the Special Priority Program SPP 2026 ``Geometry at Infinity" from the German Research Foundation (DFG)}
\maketitle
\vskip -3.99ex

\centerline{\noindent\mbox{\rule{3.99cm}{0.5pt}}}

\vskip 5.01ex

\ \ \ \ {\bf Abstract:} In this paper we survey some results on Ricci flowing non-smooth initial data.    Among other things, we give a non-exhaustive list of various  weak initial data which can be evolved with the Ricci flow.  We also survey results which show that various  curvature  lower bounds will, possibly up to a constant,  be preserved, if we start with   such possibly non-smooth initial  data. Some   proofs/proof sketches  are given in certain cases.
A list of some open problems related to these areas is given in the last section of the  paper.

\medskip


\section{Introduction}\label{sec:intro}
 For a given smooth manifold $M$,  and an interval $I,$ a family of smooth Riemannian metrics  $(M,g(t))_{t\in I}$ is a solution to Ricci flow 
if $$\partt g(t)= -2\Rc(g(t)) \text{ for all }  t\in I .$$
In the paper \cite{ham3d} Hamilton introduced the Ricci flow   and used  it to prove that any smooth closed three dimensional manifold with  positive Ricci curvature  must be a space form. 
An important property of the Ricci flow which Hamilton  and later others used, is that the non-negativity of various curvature quantities  and in the case of scalar  curvature lower bounds are   preserved under the flow.
Here we present a non-exhaustive list of some curvature    lower bounds  which are preserved under the Ricci flow (definitions of and a brief introduction to the curvature conditions considered here are  given in the next section, Section \ref{flows-and-curvature}).
Here $\Sc(g)$ denotes the scalar curvature, $\Ricci(g)$ the Ricci curvature of $g$ and $\sec(g)\geq k$ means that the sectional curvatures of $g$ are everywhere larger than $k$.\\
In this paper, we use the term {\it manifold} for a manifold without boundary.
Let $(M^n,g(t))_{t\in [0,T]}$ be a smooth, complete, connected  solution   to Ricci flow, and if $M^n$ is non-compact assume further that   
 $\sup_{M\times [0,T]} |\Rm(g(t))|< \infty$ (we call a  solution $(M,g(t))_{t\in I},$ where $I$ is an interval,  {\bf a bounded curvature solution}, if $\sup_{M \times [a,b]} |\Rm(g)|< \infty$ for all compact $[a,b] \subseteq I$ ).  Then 
\begin{itemize} 
\item[(i)]
$M=M^3$    and $\Rc(g(0))\geq 0 \ (\Rc(g(0)) >0)     \implies \Rc(g(t))\geq 0 \ (\Rc(g(t))> 0) $ for all $t\in [0,T]$: Hamilton :  \cite{ham3d}
\item[(ii)] $M=M^3$  and $\sec(g(0)) \geq  0 \ ( \sec(g(0)) > 0 )  $ 
$\implies$ $\sec(g(t)) \geq   0  \ ( \sec(g(t)) >  0 ) $ for all $t\in [0,T].$
Hamilton :  \cite{HamFor}
\item[(iii)] 
$M=M^3,$ $\Sc(g(0))>0$    and $\Rc(g(0))\geq\ep\Sc(g(0)) $ $\implies$ $\Sc(g(t))>0$    and $\Rc(g(t)) \geq\ep\Sc(g(t)) $  for all $t\in [0,T]$
: Hamilton  \cite{ham3d} (Lott  \cite{Lott-Ricci-pinched}). 
\item[(iv)]
$M=M^n,$ $k \in \R$  and $\Sc(g(0)) \geq k$ $\implies$ $\Sc(g(t))\geq k$ for all $t\in [0,T]$ . Hamilton,  \cite{ham3d}.
\item[(v)]
$(M^n ,g(0))$  has  non-negative  isotropic curvature   $\implies$
$(M^n ,g(t))$  has   non-negative  isotropic curvature    for all $t\in [0,T]$: Hamilton, $n=4$, \cite{HamIso},  
Brendle/Schoen \cite{Brendle-Schoen} and  Ngyuen \cite{HuyNguyenThesis} $n \in \N$. 
\item[(vi)] $(M^n ,g(0))$ has non-negative (positive)  curvature
operator $\implies $  $(M^n ,g(t))$ has non-negative (positive)  curvature operator for all $t\in [0,T]$, Hamilton,    \cite{ham4d}
\item[(vii)]   $(M^n ,g(0))$ has $2$-non-negative (positive)  curvature
operator $\implies $  $(M^n ,g(t))$ has $2$-non-negative (positive)  curvature operator for all $t\in [0,T].$  Hamilton, \cite{HamFor}
\item[(viii)]  $(M^n ,g(0))$ is K\"ahler and has non-negative    holomorphic  bisectional curvature, then
  $(M^n ,g(t))$ is K\"ahler and  has non-negative    holomorphic  bisectional curvature, Mok  \cite{Mok}, ( Bando \cite{Bando} for the three dimensional case).
\end{itemize}

\begin{rem}
Regarding (i) and (ii): There are examples of closed smooth  Riemannian four manifolds   $(M^4,g(0)) $ with  $\Ric(g(0))>0$  ( $\sec(g(0))>0$) such that the solution  $(M^4,g(t))_{t\in [0,T)} $ to Ricci flow does {\bf not} have $\Ric(g(t))\geq 0$
( $\sec(g(t))\geq 0$  ) everywhere on $M$ for all $t\in [0,T).$  See for example the papers of {\rm Maximo}, \cite{Maximo}, respectively {\rm Bettiol}  and {\rm Krishnan}  \cite{BetKris}). 
\end{rem}
\begin{rem}
We survey   results for manifolds without boundary in this paper. However, there are
a number of result on preserving curvature conditions and Ricci flowing non-smooth data  for  manifolds with boundary : see
for example  \cite{PulBound}, \cite{GiaBound} , 
\cite{TCBound},  \cite{PhDschlichting},  \cite{BreBound}, \cite{CorMurBound}, \cite{She96} (and the references therein) for definitions of and theorems on  Ricci flow with boundary,  preserving curvature conditions / Ricci flowing non-smooth data  under  Ricci flow with boundary.   
\end{rem}

In Metric  and Differential Geometry an important role is often played  by spaces which are not necessarily smooth but have some curvature bounds from below or above  in a weak sense.
These spaces sometimes  occur as limits of smooth spaces with the same or similar curvature bounds, sometimes they are constructed  directly. 

Some famous examples of classes  of such metric spaces with curvature bounded from below are:  

The spaces   {$\curlM(n,k) $ } of Alexandrov et al. : {\it Alexandrov   metric spaces with dimension $n$ and sectional curvature  not less than $k$} : see \cite{Bur-Gro-Per}\\
The {\it $RCD(k,n)$ ($RCD^{*}(k,n)$) Spaces} introduced and studied  by Lott/Villani  and  Sturm
:  {\it Metric spaces with dimension not larger  than  $n$ and Ricci  curvature  not less than $k$} : see the surveys \cite{Gig} and \cite{Cav-Mil}.

Such spaces may have singularities, either of the metric type or in the topological sense.
A metric space $(M,d)$ is {\it smooth at} a point  $p\in M$ if there is a neighbourhood $U$ of $p$ in $M$ and a homeomorphism $\phi: U \subseteq M  \to   \B_{r}(0) \subseteq \R^n,$ and a smooth Riemannian metric $g$ on $ \B_{r}(0)$ such that the push forward  $\ti d$ of $d$, $\ti d(x,y):= d(\phi^{-1}(x), \phi^{-1}(y))$ agrees with the metric $\hat d$ induced by $(\B_{r}(0),g)$ on $\B_{s}(0)$ for some $s>0$.\\
A metric space $(M,d)$ has a {\it topological singularity} at $p\in M$ if there is no neighbourhood of $p$ in $M$ which is homeomorphic to an  euclidean ball.\\
A metric space $(M,d)$ has a {\it metric singularity} at $p\in M$ if it has no topological singularity at $p\in M$ but it is not smooth at $p$.

The Ricci flow has also been successfully used to evolve   many non-smooth metric spaces with curvature bounds from below in some weak sense and this has led to many applications in differential  and metric geometry. 
\begin{defi}
Let $(M,g(t))_{t \in (0,T)}$ be a smooth, connected  solution to Ricci flow and $(X,d_X)$ a  metric space.  
We say $(M,g(t))_{t\in (0,T)}$    {\rm has  initial conditions  $(X,d_X)$ }  or  {\rm is coming out of $(X,d_X)$}  if the following holds:\\
 There exists a metric $d_0$ on $M$ such that $(M,d_0)$ is isometric to $(X,d_X)$ for which 
$(M,d(g(t)) \to (M,d_0)$ {\it locally, uniformly } as $t\downto 0,$  where  $d(g(t))$ is the metric (distance)  induced by the Riemannian metric $g(t)$  for $t>0$. \\
$(M,d(g(t)) \to (M,d_0)$  {\it locally , uniformly } as $t\downto 0$   means: For all compact $K \subseteq M$  $\sup_{x,y \in K} |d(g(t))(x,y) -d_0(x,y)| \to 0$ as $t\downto 0$. 
\end{defi}

In some circumstances, a number of {\it   curvature bounds} from below  at time zero  are, up to  a constant factor, preserved for  some short time by the Ricci flow even in the case that the initial lower bound is less than zero.
 Furthermore, other estimates can then be shown to hold. 
This has lead to many applications in  the   non-smooth/smooth  setting.

To use this method for a given not-necessarily smooth metric space, one needs to answer two questions:\\
i) Does there exist a Ricci flow or related flow  of $(X,d_X)$?\\
ii) Are the curvature bounds from below of $(X,d_X)$ preserved by the flow, or  at least preserved up to a constant factor?\\

In the following sections we examine a number of 
settings where one is able to 
answer both or one of the above questions, or related questions,   affirmatively   and we explain some applications.

In  Section \ref{open-problems}  we present a list of some open problems on  Ricci flowing non-smooth objects and the preservation of lower curvature bounds. 

\section{Ricci flows of non-smooth data}\label{flows-and-curvature}

We use the following notation in this section. 
 
\begin{defi}\label{def-fair-controlled}
\  \   \\
\noindent i) For two Riemannian  metrics $g$ and $h$  on a smooth manifold $M$  and  a  constant $0<\si<\infty $ we say     {\rm $h$ is $\si$-fair  to  $g$}  if   $\frac{1}
{\si} h  \leq g \leq \si  h.$  Note that $h$ is $\si$  fair   to $g$ $\iff$   $g$ is $\si$ fair   to $h$. \\
ii) A smooth   Riemannian manifold  $(M,h)$ is said to be  {\rm geometrically bounded }    if  \! \!  
\break  $ \sup_M |\Rm(h)| < \infty$ and   $inj(M,h)>0$.\\
iii)
We use the following notation from \cite{Bur-Gro-Per}.
Let  $(X_i,d_i), (X,d_X)$ be metric spaces, $i\in \N,$ and $x_i \in X_i,$ $x\in X$ for all $i\in \N$. 
We say $(X_i,d_i,x_i)$ converges in the pointed Gromov-Hausdorff sense to $(X,d_X,x)$ if
there exists  $\ep(i),R(i) \in \R^+$ with $\ep(i) \to 0$ and $ R(i) \to \infty$ as $i\to \infty$ and maps $F_i: B_{d_i}(x_i,R(i)) \to B_{d_X}(x,R(i))$ with $F_i(x_i) =x$ 
such that $$ |d_X(F_i(y),F_i(z)) - d_i (y,z)|\leq \ep(i)$$
for all $i\in \N$ for all $x, y \in B_{d_i}(x_i,R(i)),$
and $B_{d_X}(x,R(i)- \ep(i)) \subseteq F_i(B_{d_i}(x_i,R(i)))$.  
\end{defi}
We now present an introductory, non-exhaustive,  list of   metric spaces which can be evolved by   Ricci flow,  or a related flow.  \\

{\bf Some spaces which can be evolved by Ricci flow, or a related flow}  
\begin{itemize}
\item[(i)] $g(0)\in C^0(M),$ $M$ closed.    Then there is a Ricci flow solution coming out of $(M,d(g(0))),$  Simon  \cite{SimonC01}  and  \cite{SimonC02} (see also Koch/Lamm \cite{KochLamm1, KochLamm2}).   See  Section  \ref{c0flowsection}.
\item[(ii)]  $g(0)\in C^0(M),$ $(M,h)$ smooth, complete , connected,  $\sup_M |\Rm(h)| < \infty$ ,   and assume $h$ is $ (1+\ep(n))$-fair  to $g(0)$ for a small  $\ep(n)>0$. Then there is a Ricci flow solution coming out of $(M,d(g(0))).$ Simon  \cite{SimonC01} and  \cite{SimonC02} (see also  Koch/Lamm \cite{KochLamm1, KochLamm2}).  See   Section \ref{c0flowsection}.    
\item[(iii)]Assume $(M^4,h)$ smooth, complete , connected,  geometrically bounded, four dimensional  and $g(0)\in W^{2,2}_{loc}(M^4,h)\cap W^{2,2}_{glob}(M^4,h)$  and $g(0),$
is  $a$-fair to $h$  for some $0<a< \infty,$   where 
$g(0)\in W^{2,2}_{glob}(M^4,h)$ means  
\begin{eqnarray} 
\int_M (|\gradh  g(0)|^2 +  |\gradh^2 g(0)|^2) d\vol_h  < \infty 
\end{eqnarray} 
(the $glob$  appearing in the  $W^{2,2}_{glob}(M^4,h)$ is used to note that we are referring to a global norm). 
 Then there is a smooth solution to {\it $h$-Ricci DeTurck flow} $(M,g(t))_{t\in (0,T)}$ such that $g(t) \to g(0)$ in the $W^{2,2}_{loc}$ sense as $t\downto 0$  {\bf and } a 
   Ricci flow  $(M,\ell(t))_{t\in (0,T)}$ solution coming out of $(M,d(g(0))),$ if  {\bf  $d(g(0))$ is defined appropriately} : Lamm/Simon \cite{LammSimon} (see Section \ref{weakmetrics}).  
 The Ricci DeTurck flow is introduced in the next section. 
  For the moment, we merely note that  
  $(M,\ell(t))_{t\in (0,T)}$  is a {\it Ricci flow related solution }  to $g(t)$: that is   $\ell(t):= (\Phi_t)^{*}g(t),$   
  where  $\Phi_t = \Phi(\cdot,t):M \to M$ is a diffeomorphism 
$\Phi:M \times (0,T) \to M$  is smooth. In particular  $g(t)$ is isometric to $\ell(t)$ for all $t>0$.

\item[(iv)]   
$(M^n,h)$ is smooth, complete, connected $n$-dimensional, $\sup_M |\Rm(h)| < \infty$  and $g(0)\in W^{1,n}_{glob}(M,h) \cap W^{1,n}_{loc}(M,h) $  and $g(0),$
is $a$-fair  to $h$  for some $0<a< \infty$,
where 
$g(0)\in W^{1,n}_{glob}(M,h)$ if  
\begin{eqnarray} 
\int_M |\gradh  g(0)|^n  d\vol_h  < \infty .
\end{eqnarray} 
  Then there is a smooth solution to {\it $h$-Ricci DeTurk flow} $(M,g(t))_{t\in (0,T)}$ such that $g(t) \to g(0)$ in the $W^{1,n}_{loc}$ sense. Chu/Lee, 
\cite{ChuLee}.  
  See the next section for a definition of  the Ricci DeTurck flow.  
 
\item[(v)] $(X,d_X,x)$ is the pointed Gromov-Hausdorff limit   of a sequence of smooth, three dimensional  Riemannian manifolds
$(M^3_i,g_i(0) ,x_i)$   which are uniformly non-collapsed,  
that is $\vol(B_{g_{i(0)} }(x,1)) \geq v_0>0$ for some $v_0>0$ for all $x\in M_i$, and the metrics satisfy $\Ricci(g_i(0)) + k g_i(0) \geq 0  $ for some  $k\in \R$   or
$\sec(g_i(0)) + k\geq 0$ for some  $k\in \R$. 
Then there is a smooth Ricci flow $(M,g(t))_{t\in (0,T)}$ coming out of  $(X,d_X).$
Simon   \cite{SimonCrelle3D} for the case $\sup_M |\Rm(g_i(0) )|< \infty$  for all  $i \in \N$ , Simon/Topping \cite{SiTo1, SiTo2}, Hochard \cite{hochard} for the case  $\sup_M |\Rm(g_i(0) )|= \infty$  for all or some $i \in \N.$  
\item[(vi)] $(X,d_X,x)$ is the pointed Gromov-Hausdorff limit of a sequence of smooth, connected, complete  Riemannian manifolds
$(M^n_i,g_i(0),x_i))$   which are uniformly non-collapsed,  
that is $\vol(B_{g_i}(x,1)) \geq v_0>0$ for some $v_0>0$ for all $x\in M_i$,
 and the metrics satisfy $\curlR(g_i(0)) + k I(g_i(0)) \in \curlC $ for some  $k\in \R$ where $\curlC$ is one of  the  curvature conditions  $\curlC_{CO},\curlC_{2C0}, \curlC_{IC1},
 \curlC_{IC2},$     in the list of {\it curvature conditions}  \ref{curvature-conditions} given below. 
Then there is a smooth Ricci flow $(M,g(t))_{t\in (0,T)}$ coming out of  $(X,d_X):$ 
Bamler/Cabezas-Rivas/Wilking, \cite{Bam-Cab-Riv-Wil} for  the case  $\sup_M |\Rm(g_i(0) )|< \infty$  for all   $i \in \N,$   and Bamler/Cabezas-Rivas/Wilking, \cite{Bam-Cab-Riv-Wil} Hochard \cite{HochardThesis}, Lai \cite{Lai1}, in the case    $\sup_M |\Rm(g_i(0) )|= \infty$  for some or all   $i \in \N$  : See Section \ref{sec-ricci-flow-non-collapsed-non-neg} for more details/references.
\item[(vii)] $(X,d_X,x)$ is the pointed Gromov-Hausdorff limit of a sequence of smooth, connected complete  K\"ahler manifolds
$(M^n_i,g_i(0),x_i)$   which are uniformly non-collapsed,  
that is $\vol(B_{g_i(0)}(x,1)) \geq v_0>0$ for some $v_0>0$ for all $x\in M_i$, and the metrics satisfy $\curlR(g_i(0)) + k I(g_i(0)) \in \curlC $ for some  $k\in \R$ where $\curlC$ is the condition $\curlC_{HB}$  of \ref{curvature-conditions}. 
Then there is a smooth K\"ahler Ricci flow $(M,g(t))_{t\in (0,T)}$ coming out of  $(X,d_X).$
Bamler/Cabezas-Rivas/Wilking \cite{Bam-Cab-Riv-Wil}, in the case $\sup_M |\Rm(g_i(0) )|< \infty$  for all   $i \in \N,$   Lee/Tam \cite{LeeTam} for  the case  $\sup_M |\Rm(g_i(0) )|= \infty$  for some or all   $i \in \N.$ 

\item[(viii)] $(M,g(0))$ is a smooth,  closed, K\"ahler manifold, and the potential is bounded in $C^{1,1}$ : 
Then there is a solution to K\"ahler Ricci flow with initial value given by $g(0)$ :  Chen/Tian/Zhang.  See \cite{ChenTianZhang} for details (and see Note 3 below).
\item[(ix)] $M^2$ is a two dimensional,  connected Riemann surface, and $\mu_0 $ is any nonnegative nontrivial Radon measure on $M$ that is non-atomic, that is $\mu_0(\{x\}) = 0$ for all $x \in M.$
Then there exists a smooth Ricci flow $(M,g(t))_{t\in (0,T]}$ such  
that the Riemannian volume measure, $\mu_{g(t)},$ satisfies $\mu_{g(t)} \to \mu_{0}$ weakly as $t\downto 0$ : Topping/Yin  \cite{TopYin1, TopYin2}.
\item[(x)] $(X,d_X)$ is a two dimensional closed Alexandrov space with curvature bounded from below  :  \cite{SimonCrelle3D} combined with the fact, from the theory of Alexandrov-Spaces, that $(X,d_X)$ can be written as a Gromov-Hausdorff limit of smooth Riemannian manifolds with $\sec\geq-k$ for a fixed $k\in \R$ (see \cite{Richard-T-Alex}, and  Section \ref{sec-ricci-flow-non-collapsed-non-neg} for more details and references). 
\item[(xi)]   $(M^2,g(0))$ is a smooth, connected, not necessarily complete Riemannian surface.  Giesen/Topping: See   \cite{GiTop} and the references therein.

\end{itemize}

\begin{itemize}
\item[(Note 1):] 
 Note that if $M$ is compact in (iii) or (iv),  $g \in W^{k,l}_{glob}(M^n,h)$ if and only if $g \in W^{k,l}_{loc}(M^n)$
  where $g \in W^{k,l}_{loc}(M^n)$ if and only  we can find a covering of $M$ by coordinate charts $(U_{\alpha})_{\alpha=1}^N,$ $\phi_{\alpha}: U_{\alpha} \to V_{\alpha}  \subseteq \R^n $  with inverse $\psi_{\alpha}$  for  which, 
  $ (g_{\alpha})_{ij}(x):= g(\psi_{\alpha}(x))( D\psi_{\alpha}(x)(e_i(x)),D\psi_{\alpha}(x)(e_j(x)))$ for $x \in V_{\alpha}$  satisfies   $(g_{\alpha}) _{ij} \in W^{k,l}(V_{\alpha})$ for each $\alpha \in \{1, \ldots, N\}$   for each $i,j\in \{1, \ldots,n \},$ $(k,l) = (1,n)$ in the case   (iv) above,  or $(k,l) = (2,2),$ in the case   (iii) above,   and
   $W^{k,l}(V_{\alpha})$ refers to the usual definition of a Sobolev space. 
  
   \item[(Note 2):] In the  two dimensional case \\
   a) all curvatures are controlled by the scalar curvature and\\
   b)   There is    extra structure, as one can find solutions to Ricci flow which are defined   in terms of a conformal change of the metric: $g(t)= u(t)g(0)$.
   See \cite{ham2d}. 
   We only survey (generally) two dimensional solutions in the general setting of this paper, that is in the setting of spaces with curvature bounded from below, and  we do not survey them  specifically in depth.  There are however many papers on the subject, and we refer the reader to  \cite{GiTop}, \cite{TopYin1},\cite{TopYin2}, \cite{ham2d}, \cite{HamYauHar}, and the references therein for more details.  
   \item[(Note 3)] 
  In the K\"ahler case, like the two dimensional Riemannian case, there is more structure, as one can find solutions in the closed case to K\"ahler Ricci flow which are defined   in terms of a  potential. For a given K\"ahler  form $\omega_0,$  the K\"ahler Ricci flow can be written as   $\omega(t) = \hat \omega_t  +\partial \bar{\partial} u,$  where  the potential $u : M \times [0,T) \to \R$ is a smooth scalar function, $\omega(t)$ is the K\"ahler form associated to the K\"ahler metric $g(t)$ and $\hat \omega_t = \omega_0 + t\chi,$ 
  where $\chi \in -c_1(M)$ is fixed.  The potential satisfies a Monge-Amp{\`e}re type 
  equation (see for example \cite{SonWei} for details). 
   Although  we survey some results in the K\"ahler  setting, we do so more as a subset embedded in the general setting, that is of spaces with curvature bounded from below. 
  We  do not survey them in depth in this paper. There is however a vast amount of literature on the subject, and we refer the reader to the articles \cite{SonWei},\cite{Tos} or the book \cite{BouEysGued}, and the papers referenced therein, for more details.

\end{itemize}

{\bf Curvature operators and curvature conditions}\\
  
We introduce now very briefly some notation for, and   definitions of,  curvature    conditions which  appear in this paper.
\begin{defi}
A curvature operator on $M^n $ is a $(0 \   4)$-tensor   $S,$ locally $S = S_{ijkl},$ defined on $M$ which has the same zero-order symmetries as the 
curvature tensor. 
That is, for all $i,j,k,l \in \{1, \ldots, n\}$ it satisfies: \\
$S_{ijkl} = S_{klij},$ \\
$S_{ijkl} = -S_{jikl} = -S_{ijlk},$\\
$S_{ijkl} + S_{jkil} + S_{kijl} = 0,$
\end{defi}
We use the exposition given in \cite{Bam-Cab-Riv-Wil} and \cite{Boehm-Wilking}.
In the following  $(M^n,g)$ is  a smooth Riemannian manifold and $S$ a curvature operator.
One can naturally define a metric on $\Lambda^2(T_p M),$ by  
$\langle v\wedge w , z\wedge b \rangle := g(p)(v,z)g(p)(w,b) - g(p)(v,b) g(p)(w,z)$ for all $    v\wedge w = v^i \frac{\partial}{\partial x^i}(p)  \wedge w^j \frac{\partial}{\partial x^j}(p) , 
z\wedge b = z^i  \frac{\partial}{\partial x^i}(p)  \wedge b^j \frac{\partial}{\partial x^j}(p)  \in \Lambda^2(T_p M).$ 
Then, if $e_1, \ldots,e_n\in T_p M$ is an orthonormal frame for $(T_p M,g(p))$ so is $ (e_i \wedge e_j)_{i<j, i,j  \in \{1, \ldots n\}} $   an orthonormal frame for $\Lambda^2(T_p M).$ 
One can naturally consider $S$ as  a symmetric,  bilinear      form  $S:   \Lambda^2(T M)  \times \Lambda^2(T M) \to \R$  by defining  
$S(p)(e_i\wedge e_j,e_k \wedge e_l)= S_{ijkl}(p)$ for $i<j$,$k<l,$ and  then extending this linearly.    $S$ can then    be considered as a self-adjoint (with respect to $\langle \cdot, \cdot   \rangle$ ) operator 
$S: \Lambda^2(T M) \to \Lambda^2(T M),$ $\langle S(v\wedge w)  , z\wedge b  \rangle =  \langle  (v\wedge w)  ,S( z\wedge b ) \rangle.$
Hence $S$ is diagonalisable with respect to $\langle \cdot,\cdot \rangle,$ and one can find an orthonormal basis, with respect to $\langle \cdot,\cdot\rangle$,  of eigenvectors. \\
For a smooth Riemannian manifold $(M,g)$, the Riemannian curvature tensor $\curlR = \curlR(g)$ is a curvature tensor, as is $I=I(g)$, the identity tensor given by 
$I(v,w,z,b) = \langle v\wedge w, z\wedge b\rangle.$
\begin{defi}
(Curvature conditions) \label{curvature-conditions}\\
Let $(M,g)$ be a smooth Riemannian manifold, and $S$ be a curvature operator on $M$.
\begin{itemize} 
 \item[(i)] { \rm Non-negative (positive)     Ricci curvature}\\ 
\noindent $S \in \curlC_{Ricci}$  ($S \in \curlC^{+}_{Ricci}$)
if
$ S_{kl}:= g^{ij} S_{ikjl} $ is a non-negative (positive) $(0 \ \ 2)$ tensor.\\
\item[(ii)]{\rm  Non-negative (positive) sectional curvature}\\
$S \in \curlC_{Sec}$  ($S \in \curlC^+_{Sec}$)
if
$  S(v,w,v,w)\geq 0  $  for all $v,w \in T_p M,$ ($  S(v,w,v,w)> 0  $  for all linearly independent non-zero vectors $v,w \in T_p M,$) for all $p\in M$ \\
\item[(iii)]{\rm Non-negative (positive)  curvature Operator}\\
  $S \in \curlC_{CO}$  ($S \in \curlC^+_{CO}$) 
if all eigenvalues of $S$ are non-negative (respectively positive) everywhere.\\
\item[(iv)] {\rm $2$-Non-negative (positive)  curvature Operator}\\
 $S \in \curlC_{2CO}$   ($S \in \curlC^+_{2CO}$)   if  
the sum of the two smallest eigenvalues of $S$ are non-negative  respectively  the sum
of  the two smallest eigenvalues of $S$ are positive everywhere..\\
\item[(v)] {\rm   WPIC :Weakly positive isotropic curvature (respectively PIC:  positive isotropic curvature)   }\\
   $S \in \curlC_{IC}$  (  $  S \in \curlC^+_{IC}$ ) if
$$ S_{1313} +  S_{1414} + S_{2323} +  S_{2424} - 2 S_{1234} \geq 0,$$ (respectively 
$>0$) for all orthonormal frames $\{e_1,e_2,e_3,e_4\}$
in $T_p M$ for all $p\in M.$\\ 
\item[(vi)]{\rm WPIC1 and PIC1 }\\
$S \in \curlC_{IC1}$  ( $  S \in \curlC^+_{IC1}$  )  if
$$ S_{1313} + \lambda^2 S_{1414} + S_{2323} +\la^2  S_{2424} - 2 \la S_{1234} \geq 0,$$ (respectively 
$>0$) for all orthonormal frames $\{e_1,e_2,e_3,e_4\},$
in $T_p M$ for all $p\in M,$ and all  $\la \in [0,1].$\\
\item[(vii)]{\rm  WPIC2 and PIC2  }\\
$S \in \curlC_{IC2}$ ( $ S \in \curlC^+_{IC2}$ ) if
\\
$$S_{1313} + \lambda^2 S_{1414} +\mu^2S_{2323} +\la^2 \mu^2 S_{2424} - 2\mu \la S_{1234} \geq 0 $$  (respectively 
$>0$) for all orthonormal frames $\{e_1,e_2,e_3,e_4\},$
in $T_p M$ for all $p\in M,$ and all  $\mu,\la \in [0,1].$
\item[(viii)] {\rm  Non-negative (positive)  holomorphic bisectional curvature}\\
If  $(M,g)$ is K\"ahler, with complex structure $J$,  then we say $S \in \curlC_{HB} $  \\
if $R(p)(X,JX,Y,JY)\geq  0$ for  all $X,Y  \in T_p M$ ($S \in \curlC^+_{HB} $  
if $R(p)(X,JX,Y,JY) >  0$ for all non-zero $X,Y  \in T_p M$) for all $p\in M$.

\end{itemize}
\end{defi}

Note: The conditions $S \in  \curlC^+_{IC},$ respectively $S\in  \curlC^+_{IC1}, $ respectively $S \in \curlC^+_{IC2}, $ are generally  known  in the literature  as  "$S$ has positive isotropic curvature ", or $S$ is PIC, respectively "$S$ is PIC1", respectively
"$S$ is PIC2". The condition PIC was introduced in the paper \cite{MicMoo}, 
the conditions PIC1, PIC2    in the paper  \cite{Brendle-Schoen}. 

Also :  $S \in \curlC$ {\bf except case (v)} implies $S\in \curlC_{Ricci}$.
For a list of some more relations between the curvature conditions above,  we refer the reader to   the book of Brendle  \cite{BrendBook} or the survey of Topping \cite{Top-Sur}.

This is a very brief introduction  to the curvature    conditions we shall consider. 
For the very many alternative, often more geometric,  but equivalent  definitions of theses conditions, we refer the reader to  \cite{HamFor} ,\cite{ham4d},  \cite{HamIso} ,  \cite{MicMoo}   (where the PIC condition was introduced ),   \cite{Brendle-Schoen} ( where the PIC1 and PIC2 condition were introduced),    as well as  \cite{WilkingLie} (where    Lie-algebraic descriptions of many  curvature conditions was   studied in the context of the Ricci flow),   \cite{Bam-Cab-Riv-Wil} (see Section \ref{sec-ricci-flow-non-collapsed-non-neg} and   \cite{Bam-Cab-Riv-Wil}  for more details), \cite{Top-Sur} for a detailed survey of the PIC1 condition in the Ricci flow context,   and \cite{HuyNguyenThesis}.

 


\section{$C^0$  Limits of smooth Riemannian manifolds   with      curvature lower bounds}\label{c0flowsection}

In the papers \cite{SimonC01} and  \cite{SimonC02}   (in  \cite{SimonC02}  some corrrections to \cite{SimonC01} were made) it was shown for   any $C^0$ Riemannian metric $g(0)$ on a closed manifold $M$, and any  smooth metric $h$, which  satisfies $ \frac{1}{(1+\ep(n))}h \leq g(0) \leq (1+\ep(n))h $ for a small constant $\ep(n)>0$ (such an $h$ may be obtained by mollifying the metric $g(0)$), then there exists a solution $g(t)_{t\in (0,T)}$
to the so called {\it Ricci DeTurck flow with background metric $h$}, such that $|g(t)- g(0)|_{h} \to 0$    as 
 $t \downto 0,$ where $T = T(n,\sup_M |\Riem(h)|_h)  >0.$
 The Ricci DeTurck flow was first studied by DeTurck in the paper \cite{DeTurck}.
 For simplicity  we call  {\it Ricci DeTurck flow with background metric $h$} the { $h$-Ricci DeTurck flow}.
 For an interval $I$, $(M,g(t))_{t \in I}$ is a solution to $h$-Ricci DeTurck flow if it solves the equation
 \begin{eqnarray} \label{Meq}
  \partt {g}_{ij}=&\,{g}^{ab} (\gradh_a\gradh_b {g}_{ij})
  -{g}^{kl}{g}_{ip}h^{pq}R_{jkql}(h)
  -{g}^{kl}{g}_{jp}h^{pq}R_{ikql}(h)\cr
  &\,+\tfrac12{g}^{ab}{g}^{pq}\left(\gradh_i {g}_{pa}\gradh_j{g}_{qb}
    +2\gradh_a{g}_{jp}\gradh_q{g}_{ib}-2\gradh_a{g}_{jp}
    \gradh_b{g}_{iq}\right.\cr
  &\,\left.\qquad\qquad\qquad-2\gradh_j{g}_{pa}\gradh_b{g}_{iq}
    -2\gradh_i{g}_{pa}\gradh_b{g}_{jq}\right),
\end{eqnarray}
 in the smooth sense on $M\times I$, where here, and in the rest of the paper,  $\gradh$ refers to the covariant derivative with respect to $h$. 
 Ricci DeTurck flow and  Ricci flow in the smooth  setting  are closely related : 
 given an  $h$-Ricci DeTurck flow $g(t)_{t\in I}$ on a closed  manifold and an $S \in I$ there  is a smooth  family of diffeomorphisms $\Phi(t):M \to M,$ $t\in I$ with $\Phi(S) = Id$ such that $\ell(t) = (\Phi(t))^*g(t)$  is a smooth solution to Ricci flow. The diffeomorphisms $\Phi(t)$ solve the following ordinary differential equation:
 \begin{align}
    \partt {\Phi}^{\al}(x,t) =& V^{\al}(\Phi(x,t),t),  \ \ \ \text{for all}\ \  
    (x,t)\in M^n \times I,\nonumber \\
     \Phi(x,S)=& x.  \label{ODEDe}
  \end{align}
  where $ {V}^{\al}(y,t) := -{g}^{\be \ga} \left(
      {\Gamma(g)}^{\al}_{\be \ga} -  {\Gamma(h)}^{\al}_{\be \ga}\right)
  (y,t)$

\begin{thm}(  \cite{SimonC01} and \cite{SimonC02} )\label{C0RicciDeturckthm}
For any $n\in \N$ there exists an $\ep(n)>0$ such that the following holds. 
Let   $(M^n,h)$ be a smooth, connected, complete $n$-dimensional 
Riemannian manifold such that $K_j:= \sup_M |\grad^j \Rm(h)|  <  \infty$  for all  $j \in \N_0.$
Assume further that    $g(0)$  is  a $C^0$  Riemannian metric on $M,$  for which   $\frac{1}{(1+ \hat \ep )}h \leq g(0) \leq (1+\hat \ep )h$  holds, where $0<\hat \ep \leq \ep(n).$ Then  
 there exists  a   $T=T(n,\hat \ep, K_0) >0$, $T$ being monotone increasing in the second variable and decreasing in the third variable, and  a  smooth complete  solution $(M,g(t))_{t \in (0,T)}$  to \eqref{Meq}
with initial value $g(0)$, in the sense that   $\sup_{K} |g(t)-g(0)|_{g(0)}\to 0$ for any compact $K \subseteq M,$ as $t\downto 0,$ and    
the following estimates hold:
\begin{eqnarray*}
&& \frac{1}{(1+ 2\hat \ep )}h \leq g(t) \leq (1+2\hat \ep )h \\
&&|\gradh^j g(t)|^2 \leq \frac{c_j(n,h)}{t^j}   
\end{eqnarray*}
for all $t\in (0,T).$
If $g(0)$ is smooth, then the solution may be smoothly extended to $(M,g(t))_{t \in [0,T)}.$ 
Here $c_j(n,h) =a(j,n,\sup_M |\Riem(h)|,  
\ldots, \sup_M |\grad^j \Riem(h)|)$ and $a(j,n,k_0,k_1, \ldots,k_j)$ is monotone increasing in each of the  
variables $k_0,k_1, \ldots,k_j$ for all $j \in \N_0,n \in \N$. 
\end{thm}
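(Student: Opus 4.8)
The plan is to realise the solution as a $C^\infty_{loc}$ limit of smooth Ricci DeTurck flows with smooth initial data, so that the whole content lies in a priori estimates which are uniform under the approximation. First I would mollify $g(0)$ --- for instance by convolving its components in $h$-normal coordinates, or by running $g(0)$ for an instant under a smoothing heat flow --- to produce smooth complete metrics $g_i(0)\to g(0)$ in $C^0_{loc}$ with $\frac{1}{1+\hat\ep_i}h\le g_i(0)\le (1+\hat\ep_i)h$ and $\hat\ep_i\to\hat\ep$ (one chooses $\ep(n)$ with enough room built into the estimates below that every $\hat\ep_i$ is admissible). Since $(M,h)$ has bounded geometry of all orders, $K_j:=\sup_M|\gradh^j\Rm(h)|<\infty$, a Shi-type short-time existence theorem for \eqref{Meq} with smooth, complete, bounded-geometry initial data yields smooth complete solutions $(M,g_i(t))_{t\in[0,T_i)}$. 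It then suffices to prove the asserted estimates for each $g_i$ with $T$ and the $c_j$ independent of $i$, and to pass to the limit.

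\emph{The fairness estimate.} Written for $g$ itself, \eqref{Meq} has the schematic form $\partt g=g^{ab}\gradh_a\gradh_b g+g^{-1}\ast g^{-1}\ast\gradh g\ast\gradh g+g^{-1}\ast g^{-1}\ast g\ast\Rm(h)$, so on any region where $\frac{1}{1+2\hat\ep}h\le g\le (1+2\hat\ep)h$ the leading operator $g^{ab}\gradh_a\gradh_b$ is uniformly elliptic and the zero-order reaction term is bounded by $C(n,\hat\ep)K_0$. Applying the maximum principle to a scalar quantity controlling the extreme eigenvalues of $h^{-1}g$, and using the favourable sign of the $-|\gradh g|^2$-type contribution, one sees these eigenvalues can leave $[\tfrac{1}{1+\hat\ep_i},\,1+\hat\ep_i]$ at rate at most $C(n)K_0$; a continuity/bootstrap argument then produces a time $T=T(n,\hat\ep,K_0)>0$, monotone increasing in $\hat\ep$ and decreasing in $K_0$, on which $\frac{1}{1+2\hat\ep}h\le g_i(t)\le (1+2\hat\ep)h$.

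\emph{Interior derivative estimates and passage to the limit.} On $M\times(0,T)$ the flow is now uniformly parabolic, with coefficients controlled through the $\Rm(h)$-terms by $K_0,K_1,\dots$. A Bernstein--Shi argument, by induction on $j$ and applying the maximum principle to $t^j|\gradh^j g|^2$ plus suitable lower-order corrections, yields $|\gradh^j g_i(t)|^2\le c_j(n,h)/t^j$ with $c_j$ of the asserted monotone form. Combined with the uniform fairness bound, Arzel\`a--Ascoli extracts a subsequence $g_i\to g$ in $C^\infty_{loc}(M\times(0,T))$, and the limit is a smooth complete solution of \eqref{Meq} which inherits both families of estimates.

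\emph{Attainment of the initial value --- the main obstacle.} The delicate point is that $\sup_K|g(t)-g(0)|_{g(0)}\to 0$ as $t\downto 0$ for every compact $K$. The bound $|\partt g|\le C|\gradh^2 g|+C|\gradh g|^2+CK_0\le C/t$ coming from the estimates above is \emph{not} integrable near $0$, so one cannot simply integrate the flow in time. Instead I would establish a localised, pseudolocality-type estimate: if $\mathrm{osc}_{B_r(p)}g(0)$ is small, then $|g(t)-g(0)(p)|_h$ stays comparably small on $B_{r/2}(p)$ for $t\le\tau(r,K_0)$ --- either via a maximum principle for $|g-g(0)(p)|_h^2$ localised by a fixed space-time cutoff, or via a blow-up/contradiction argument (parabolically rescaling at a putative bad point, so that in the limit $h$ becomes flat, uniform continuity of $g(0)$ forces the rescaled initial data to a constant metric, and a Liouville-type uniqueness statement for \eqref{Meq} with constant initial value forces the limit flow to be that constant, a contradiction). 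Feeding in the uniform continuity of $g(0)$ on compact sets then gives the stated convergence; and if $g(0)$ is smooth, standard parabolic regularity for \eqref{Meq} extends the solution smoothly to $t=0$.
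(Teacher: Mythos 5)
Your outline is essentially the proof from \cite{SimonC01}, \cite{SimonC02} that the survey cites (it does not reprove Theorem \ref{C0RicciDeturckthm} itself): mollify $g(0)$, solve \eqref{Meq} from the smooth approximations, preserve $\si$-fairness by a maximum principle in which the smallness of $\ep(n)$ is used to absorb the quadratic gradient terms, prove the Bernstein--Shi bounds $t^j|\gradh^j g|^2\leq c_j$, and pass to a $C^\infty_{loc}$ limit, so I regard it as the same approach. The only step where you deviate is the attainment of the initial value, which in the original is obtained by maximum-principle comparison arguments involving the smooth approximating flows rather than a blow-up/Liouville arguments; your first alternative (a cutoff maximum principle for $|g-g(0)(p)|_h^2$, absorbing the $|\gradh g|^2$ terms via the fairness and smallness of $\ep(n)$ and controlling the cutoff errors by $|\gradh g|\leq c/\sqrt{t}$) is of exactly this flavour and is fine, provided you also note that the mollified initial metrics have bounded derivatives of all orders so that the maximum principles are justified on the complete non-compact $M$.
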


\begin{rem}
If $(M,\hat h)$  is a smooth, complete, connected  manifold with $ \hat K_0 := \sup_{M} |\Rm(\hat h)|< \infty,$ and $g(0)$ a $C^0$ Riemannian metric  such that  $g(0)$ is $1 + \frac{\ep(n)}{2}$ fair to $\hat h$, then we can find, using Theorem 1.2 of Shi (\cite{shi}), 
an  $h$ satisfying the conditions of the Theorem. 
\end{rem}
\begin{rem}
If $M$ is closed, smooth, and $g(0)$ a given $C^0$ Riemannian metric, then   an $h$ and $\hat \ep$  as in the statement of the theorem always exist.
\end{rem} 

In later papers, Koch and Lamm \cite{KochLamm1}, \cite{KochLamm2}   proved   versions of   Theorem  \ref{C0RicciDeturckthm}  and  short time existence for many other non-linear flows with irregular initial data, using fixed point arguments and arguments from harmonic-analysis (see \cite{KochLamm1,KochLamm2}).

In a further paper   Lamm/Simon \cite{LammSimon} it was shown, as a corollary  to the proofs and results of that paper,  that there is a Ricci flow related solution coming out of $(M,d(g(0)))$, if one further assumes
there exists an $i_0>0$ such that $inj(M,h)\geq i_0>0$ : See Section \ref{weakmetrics},  Theorem \ref{WeakRicciStart}.  
\begin{thm}(Lamm/Simon \cite{LammSimon})\label{C0Ricciflow-Thm}
There exists an  $\ep_1(n)>0 $ with the following properties.  
Let $M^n$ be smooth,    $g(0),h,$ $\hat \ep, \ep(n)$  be as in 
 in the statement of Theorem   \ref{C0RicciDeturckthm}  and assume further that $inj(M,h)\geq i_0>0,$ and $\ep(n) < \ep_1(n)$. 
Then there exists a smooth, complete, Ricci flow solution $\ell(t)_{t \in (0,T)}$   with    $\sup_{K} |d(\ell(t))-  d_0| \to 0$ for any compact $K \subseteq M,$ as $t\downto 0,$   where $d_0$ is a metric isometric to $d(g(0))$.  The solution $\ell(t)$ is a Ricci flow related solution to the Ricci DeTurck solution $g(t)$ from Theorem \ref{C0RicciDeturckthm}, and can be written as $\ell(t):= (\Phi_t)^{*} g(t),$ where $\Phi:M \times(0,T) \to M$ is a solution to \eqref{ODEDe} with $S=\frac{T}{2},$ and  $\Phi_t := \Phi(\cdot,t)  $ are diffeomorphisms for $t\in(0,T),$  $\Phi_t \to \Phi_0$ locally uniformly as $t\downto 0$ where $\Phi_0$ is a   homeomorphism. The metric $d_0$ is given by $d_0:= \Phi_0^{*}(d(g(0)))$ 
\end{thm}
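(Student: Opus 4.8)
The plan is to produce the Ricci flow $\ell(t)$ by correcting the Ricci DeTurck flow $g(t)$ of Theorem~\ref{C0RicciDeturckthm} with the diffeomorphisms generated by the DeTurck vector field, and then to analyse the limit $t\downto 0$. First I would record the inputs: for $\hat\epsilon\le\epsilon(n)$, Theorem~\ref{C0RicciDeturckthm} provides $g(t)$ with $\frac{1}{1+2\hat\epsilon}h\le g(t)\le(1+2\hat\epsilon)h$ and $|\gradh^j g(t)|^2\le c_j(n,h)\,t^{-j}$ on $M\times(0,T)$. Since the DeTurck field $V^{\alpha}(\cdot,t)=-g^{\beta\gamma}\big(\Gamma(g)^{\alpha}_{\beta\gamma}-\Gamma(h)^{\alpha}_{\beta\gamma}\big)(\cdot,t)$ is algebraic in $g^{-1}$, $h$ and $\gradh g$, the cases $j=0,1$ give $|V(\cdot,t)|_h\le C(n,h)\,t^{-1/2}$ and $|\gradh V(\cdot,t)|_h\le C(n,h)\,t^{-1}$; the crucial feature is that the first of these is integrable in $t$ near $0$. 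For fixed $t\in(0,T)$ the field $V(\cdot,t)$ is smooth and bounded on the complete manifold $(M,h)$, so \eqref{ODEDe} with $S=\frac{T}{2}$ has a smooth solution $\Phi:M\times(0,T)\to M$ with $\Phi_{T/2}=\mathrm{Id}$ and each $\Phi_t:=\Phi(\cdot,t)$ a diffeomorphism of $M$; and by the DeTurck correspondence recalled just before Theorem~\ref{C0RicciDeturckthm} --- a local computation, valid here since everything is smooth and of bounded curvature on $M\times[a,b]$ for each $[a,b]\subset(0,T)$ --- the family $\ell(t):=\Phi_t^{*}g(t)$ is a smooth, complete solution to Ricci flow on $(0,T)$.

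Next I would let the maps converge as $t\downto 0$. For $0<t_1<t_2$ the arc of the trajectory $s\mapsto\Phi(x,s)$ from $s=t_2$ down to $s=t_1$ has $h$-length at most $\int_{t_1}^{t_2}C(n,h)\,s^{-1/2}\,ds=2C(n,h)(\sqrt{t_2}-\sqrt{t_1})$, so $\{\Phi_t\}$ is uniformly Cauchy on compact sets as $t\downto 0$ and the displacements $d(h)(\Phi_t(x),x)$ are uniformly bounded; hence $\Phi_t\to\Phi_0$ locally uniformly to a continuous map $\Phi_0:M\to M$. This much needs only the integrability of $|V(\cdot,s)|_h$.

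The hard part will be to show that $\Phi_0$ is a homeomorphism: one must also control the inverse maps $\Psi_t:=\Phi_t^{-1}$, show that $\Phi_t$ and $\Psi_t$ converge to mutually inverse limits, and rule out collapsing or folding as $t\downto 0$ --- and this is exactly what makes $d_0:=\Phi_0^{*}(d(g(0)))$ a genuine metric rather than a pseudometric. A naive Gronwall argument is insufficient: the estimate $|\gradh V(\cdot,t)|_h\le C(n,h)\,t^{-1}$ coming straight from Theorem~\ref{C0RicciDeturckthm} is \emph{not} integrable in $t$ and only bounds the bi-Lipschitz constant of $\Phi_t$ with respect to $d(h)$ by $t^{-C}$, which degenerates. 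This is where the strengthened smallness $\epsilon(n)<\epsilon_1(n)$ must be used. Since $g(0)$ is $(1+\hat\epsilon)$-fair to $h$ with $\hat\epsilon$ small, parabolic smoothing for the quasilinear heat equation \eqref{Meq} improves the interior estimates to the scale-invariant form $\sqrt{t}\,|\gradh g(t)|_h+t\,|\gradh^2 g(t)|_h\le C(n,h)\,\hat\epsilon$, hence $\sqrt{t}\,|V(\cdot,t)|_h+t\,|\gradh V(\cdot,t)|_h\le C(n,h)\,\hat\epsilon$; as the parabolic rescaling $(h,g(t))\mapsto(\lambda^2 h,\lambda^2 g(\lambda^{-2}t))$ preserves the fairness constant, these bounds hold at every scale, and a continuity argument then forces every $\Phi_t$ to be, at every scale, a uniformly small perturbation of the identity with modulus controlled by $\hat\epsilon$. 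Passing to the limit, $\Phi_t$ and $\Psi_t$ converge locally uniformly to continuous maps $\Phi_0$ and $\Psi_0$ that are mutually inverse, so $\Phi_0$ is a homeomorphism; then $d_0:=\Phi_0^{*}(d(g(0)))$, i.e.\ $d_0(x,y):=d(g(0))(\Phi_0(x),\Phi_0(y))$, is a metric on $M$ and $\Phi_0$ is an isometry of $(M,d_0)$ onto $(M,d(g(0)))$. I expect this step to be the main obstacle.

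Finally I would read off the convergence of distances. For each $t>0$, $\ell(t)=\Phi_t^{*}g(t)$ gives $d(\ell(t))(x,y)=d(g(t))(\Phi_t(x),\Phi_t(y))$. By Theorem~\ref{C0RicciDeturckthm}, $\sup_K|g(t)-g(0)|_{g(0)}\to 0$ for every compact $K$ while the metrics $g(t),g(0)$ all stay uniformly equivalent to $h$; this forces $d(g(t))\to d(g(0))$ locally uniformly and the functions $d(g(t))$ to be uniformly Lipschitz in each argument with respect to $d(h)$. Combining these facts with $\Phi_t\to\Phi_0$ locally uniformly yields, for $x,y$ in any fixed compact set, $d(\ell(t))(x,y)=d(g(t))(\Phi_t(x),\Phi_t(y))\to d(g(0))(\Phi_0(x),\Phi_0(y))=d_0(x,y)$, uniformly on compact sets. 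Hence $\sup_K|d(\ell(t))-d_0|\to 0$ for every compact $K\subseteq M$ as $t\downto 0$, and $d_0$ is given by the stated formula $d_0=\Phi_0^{*}(d(g(0)))$, which is the assertion.
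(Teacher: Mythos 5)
Your overall route is the same as the paper's: solve \eqref{ODEDe} with $S=\tfrac T2$, set $\ell(t)=\Phi_t^*g(t)$, use $\sup_K|g(t)-g(0)|_{g(0)}\to 0$ plus uniform fairness to $h$ to get $d(g(t))\to d(g(0))$ locally uniformly, and then combine this with $\Phi_t\to\Phi_0$ to conclude $d(\ell(t))\to\Phi_0^*(d(g(0)))$; the displacement bound $\int_0^{t}|V|_h\,ds\lesssim\sqrt t$, the completeness of $\ell(t)$, and the final distance argument are all fine and match the paper. The difference is that the paper does not prove the one genuinely hard assertion here: it quotes Theorem 2.6 of \cite{LammSimon} for the statement that $\Phi_t\to\Phi_0$ locally uniformly with $\Phi_0$ a \emph{homeomorphism}, whereas you attempt to prove it yourself.

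That attempted step is where your proposal has a genuine gap. After correctly observing that convergence of $\Phi_t$ alone is cheap, you dispose of the inverse maps with ``a continuity argument then forces every $\Phi_t$ to be, at every scale, a uniformly small perturbation of the identity with modulus controlled by $\hat\ep$,'' and then pass to the limit to get mutually inverse maps $\Phi_0,\Psi_0$. No mechanism is given for this: locally uniform convergence of homeomorphisms does not imply convergence of their inverses, and nothing you state rules out collapsing of $\Phi_0$ (which is exactly what would make $d_0$ only a pseudometric). Moreover, the Gronwall estimate you discard as useless is in fact the engine of the actual proof: with the smallness $\hat\ep\leq\ep_1(n)$ one gets $|\gradh V|\leq c\,\hat\ep/t$ (not merely $C/t$), and Gronwall then gives two-sided distortion bounds for $\Phi_{t_1}\circ\Phi_{t_2}^{-1}$ of the form $(t_2/t_1)^{\pm c\hat\ep}$ together with the $\sqrt t$ displacement bound; letting $t_1\downto 0$ this yields uniform local bi-H\"older control (exponent close to $1$) of $\Phi_t$ and of its inverse, which is what forces the limit $\Phi_0$ to be injective, open, and hence a homeomorphism with continuous inverse $\Psi_0=\lim\Psi_t$. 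This quantitative bi-H\"older argument is precisely the content of Theorem 2.6 in \cite{LammSimon} (and is where the hypothesis $\ep(n)<\ep_1(n)$ is actually used); as written, your sketch replaces it with an unproved claim, so the homeomorphism property of $\Phi_0$ --- and with it the fact that $d_0$ is a metric isometric to $d(g(0))$ --- is not established.
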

\begin{proof} 
We let $\Phi:M \times (0,T) \to M$ be the solution to 
\eqref{ODEDe2} with $S=\frac{T}{2}$. 
The paper of Lamm/Simon then shows (see Theorem 2.6 in \cite{LammSimon}) that $\Phi(\cdot,t) \to \Phi_0$ locally uniformly as $t\downto 0$ where $\Phi_0$ is a   homeomorphism. Now using that $d(g(t)) \to d_{g(0)}$ locally uniformly, we see that  $d_{\ell(t)}(x,y) = 
 d(g(t))(\Phi_t(x), \Phi_t(y))$ satisfies $d_{\ell(t)} \to 
  (\Phi_0)^{*}d_{g(0)} =:d_0$ locally uniformly. The completeness may be seen as followed: 
  For fixed $t$ let $(x_i)_{i \in \N}$ be a Cauchy sequence  with respect to $d(\ell(t))$.
  We have,     $d_{\ell(t)}(x_i,x_j) \leq \ep(N)$  for all $i,j \geq N$ where
  $ \ep(N)\to 0$ as $N \to \infty.$ Hence 
  $d_{g(t)}(\Phi_t(x_i),\Phi_t(x_j) )  \leq \ep(N)$ for all $i,j \geq N$. Since $g(t)$ is $a$-fair to $h$, we know that $g(t) $ is complete. 
  Hence, $\Phi_t(x_i)$ converges with respect to $g(t)$ to an $p$ as $i\to \infty$ which we can write uniquely as $p= \Phi_t(q),$ since $\Phi_t$ is a smooth diffeomorphism.  
 Hence $(x_i)_{i\in \N}$ converges, as $i\to \infty$, with respect to $\ell(t)$ to $q$.
 \end{proof}

We note that Theorem \ref{C0RicciDeturckthm} and the proof thereof    in \cite{SimonC01}  implies the following
\begin{thm}\label{c0_curvature_control}
Let $(M,g(0))$ be $C^0$ and $h$ be fixed, and satisfy 
$\sup_M |\grad^j \Rm(h)|_h=:K_j < \infty.$ Let $\hat \ep \leq \ep(n)$ 
satisfy $\frac{1}{(1+ \hat \ep )}h \leq g(0) \leq (1+\hat \ep )h$   and let 
$ g(t)_{t\in (0,T)}$ be the solution to \eqref{Meq} from
Theorem \ref{C0RicciDeturckthm}. 
Then there exists $ \hat T(n,h,\hat \ep) >0, \hat T\leq T$ such that
$$|\Rm(  g(t))|\leq \frac{\psi(n,\hat \ep)}{t}$$ for all $t\in (0,\hat T(n,h,\hat \ep)),$ where $\psi(n,\hat \ep)\to 0$ as $\hat \ep \to 0$. 
Hence the same is true for any Ricci flow related solution. In particular, if 
 $inj(M,h)>0,$ then  
  the Ricci flow related solution obtained in Theorem \ref{C0Ricciflow-Thm} satisfies this estimate  also: $$|\Rm(  \ell(t))|\leq \frac{\psi(n,\hat \ep)}{t}$$
for all $t\in (0,\hat T(n,h,\hat \ep)),$ where $\psi(n,\hat \ep)\to 0$ as $\hat \ep \to 0$. 
\end{thm}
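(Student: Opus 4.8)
The plan is to read the asserted bound off the a~priori estimates already recorded in Theorem~\ref{C0RicciDeturckthm}, combined with the algebraic identity expressing $\Rm(g(t))$ through $h$-covariant derivatives of $g(t)$, and then to sharpen the constant by a parabolic rescaling normalisation of the background metric together with the refined gradient estimates that are implicit in the proof of Theorem~\ref{C0RicciDeturckthm} in \cite{SimonC01}.

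First I would introduce the difference of the two Levi-Civita connections, $A(t):=\Gamma(g(t))-\Gamma(h)$, a time-dependent $(1,2)$-tensor with $A(t)^a_{bc}=\tfrac12 g(t)^{ad}\big(\gradh_b g(t)_{dc}+\gradh_c g(t)_{db}-\gradh_d g(t)_{bc}\big)$, and recall the standard identity
$$\Rm(g(t))=\Rm(h)+\gradh A(t)+A(t)\ast A(t),$$
where $\ast$ denotes contractions carried out with $g(t),g(t)^{-1},h,h^{-1}$; schematically $A=g^{-1}\ast\gradh g$ and $\gradh A=g^{-1}\ast g^{-1}\ast\gradh g\ast\gradh g+g^{-1}\ast\gradh^2 g$. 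Theorem~\ref{C0RicciDeturckthm} gives $\tfrac{1}{1+2\hat\ep}h\le g(t)\le(1+2\hat\ep)h$, so the norms $|\cdot|_{g(t)}$ and $|\cdot|_h$ are uniformly equivalent, $|g(t)^{-1}|_h\le C(n)$, and $|\gradh^j g(t)|^2\le c_j(n,h)t^{-j}$; inserting these into the identity yields, for $t\in(0,T)$,
$$|\Rm(g(t))|\le|\Rm(h)|+C(n)\big(|\gradh^2 g(t)|+|\gradh g(t)|^2\big)\le K_0+\frac{C(n,h)}{t},$$
which is the correct shape but with a constant that does not decay as $\hat\ep\to0$.

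To upgrade this I would use two observations. The first is that the estimate may be localised to small $t$, where it becomes independent of $h$: if $g(t)_{t\in(0,T)}$ solves \eqref{Meq} with background $h$, then for $\la>0$ the family $\tilde g(s):=\la^{-2}g(\la^2 s)$ solves \eqref{Meq} with background $\tilde h:=\la^{-2}h$ on $(0,T/\la^2)$, it is still $(1+\hat\ep)$-fair to $\tilde h$ at $s=0$, and $\sup_M|\grad^j\Rm(\tilde h)|_{\tilde h}=\la^{\,j+2}K_j$; so, taking $\la=\sqrt t$ small, one may assume all the $K_j$ lie below any prescribed universal threshold, at the cost of only proving the estimate at one rescaled time, which after undoing the rescaling is precisely the asserted bound on $(0,\hat T)$ with $\hat T=\hat T(n,h,\hat\ep)$ chosen small enough that $\la^{j+2}K_j$ is sufficiently small for all relevant $j$ whenever $t<\hat T$. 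The second observation is that once $\hat\ep$ is small the solution is $C^0$-close to $h$, and the maximum-principle arguments of \cite{SimonC01} that produce $|\gradh^j g(t)|^2\le c_j t^{-j}$ can be rerun with a weight equal to a power of $(|g(t)-h|_h^2+\text{const})$; the forcing terms in the resulting differential inequalities are then controlled by $|g(t)-h|_h$ (small, of size $\sim\hat\ep$) and by $\Rm(h)$ (small after the rescaling), yielding the improved bounds $|\gradh g(t)|^2\le\psi_1(n,\hat\ep)t^{-1}$ and $|\gradh^2 g(t)|^2\le\psi_2(n,\hat\ep)t^{-2}$ with $\psi_i(n,\hat\ep)\to0$ as $\hat\ep\to0$. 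Feeding these, together with $|\Rm(h)|=K_0\le\psi_0(n,\hat\ep)t^{-1}$ on the small time interval, into the identity for $\Rm(g(t))$ gives $|\Rm(g(t))|\le\psi(n,\hat\ep)t^{-1}$.

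The remaining assertions are then immediate. The Riemann tensor is a pointwise diffeomorphism invariant, so for any Ricci flow related solution $\ell(t)=\Phi_t^{\ast}g(t)$, with $\Phi$ a solution of \eqref{ODEDe}, one has $|\Rm(\ell(t))|_{\ell(t)}(x)=|\Rm(g(t))|_{g(t)}(\Phi_t(x))$ and the bound transfers verbatim; in particular it holds for the solution $\ell(t)$ of Theorem~\ref{C0Ricciflow-Thm} when $inj(M,h)>0$. I expect the genuine work to be the third paragraph: the crude $K_0+C(n,h)/t$ bound is essentially free, but forcing the constant to tend to $0$ with $\hat\ep$ requires both the rescaling normalisation of $h$ and a careful revisiting of the gradient estimates of \cite{SimonC01} that keeps track of the extra smallness coming from $g(0)$ being $(1+\hat\ep)$-fair to $h$.
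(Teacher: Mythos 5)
Your proposal follows the same backbone as the paper's proof: fix $t_0$ small, rescale parabolically so that the background metric has arbitrarily small curvature and derivatives of curvature (the paper takes $\hat g(\hat t)=c\,g(\hat t/c)$, $\hat h=ch$ with $c=1/t_0$ and chooses $\hat T$ so that $\sum_{j\le 4}|\gradhath^j\Rm(\hat h)|^2\le\hat\ep$), prove the smallness at the single rescaled time $\hat t=1$, undo the scaling, and transfer the bound to any Ricci flow related solution by diffeomorphism invariance of $\Rm$ — that last step is identical in both arguments. Where you diverge is the mechanism producing the factor $\psi(n,\hat\ep)$. You propose to reopen the interior estimates of \cite{SimonC01} and re-run the Bernstein/maximum-principle arguments with weights built from $|g-h|_h^2$ so that the constants in $|\gradh g|^2\le c_1/t$ and $|\gradh^2 g|^2\le c_2/t^2$ themselves tend to zero with $\hat\ep$; this is plausible (for the first-order estimate it is the standard refinement, and the second-order smallness should then follow by weighting with the improved first-order quantity), but it is the heaviest part of your plan and is asserted rather than carried out. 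The paper avoids this entirely: it keeps the order-one bounds $\hat t|\gradhath\hat g|^2+\hat t^2|\gradhath^2\hat g|^2\le 1$ exactly as they come from \cite{SimonC01} and \cite{shi}, passes to geodesic coordinates in which $\hat h$ is $\hat\ep$-close to the Euclidean metric up to second derivatives (lifting by the exponential map if the injectivity radius near the point is small), and then obtains $|\Rm(\hat g)(p,1)|\le\psi(\hat\ep)$ purely by interpolating between the $C^0$-fairness $|\hat g-\hat h|\le 2\hat\ep$ and the bounded higher $\hat h$-derivatives at $\hat t=1$. So your route buys a coordinate-free argument (your identity $\Rm(g)=\Rm(h)+\gradh A+A\ast A$ never needs the injectivity-radius case distinction), while the paper's interpolation buys a proof that uses only the stated conclusions of Theorem \ref{C0RicciDeturckthm} rather than the internals of its proof; if you want to close your one unverified step cheaply, replace the weighted re-derivation by exactly this interpolation — you already have all the inputs it needs.
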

\begin{proof}
Let $t_0$ be a  time with $t_0 \leq \hat T(n,h, \hat \ep)$ small, $\hat T(n,h, \hat \ep)$ to be specified below. 

We scale the solution, by $\hat g( \hat t) = cg( \frac{\hat  t}{c})$ and $\hat h =ch$ with $c= \frac{1}{t_0}$. If $t_0  \leq \hat T(n,h, \hat \ep)$ is chosen small enough,
then we have 
$\sum_{j=0}^{4} |\gradhath^j \Rm(\hat h)|^2 \leq \hat \ep.$ Assume  for the moment that  $inj_h(x) \geq 1$ for all $x \in B_{\hat h}(p,1).$ 
Then  there exist  geodesic coordinates on a ball of radius one centred at $p$ such that $|\hat h-\de| + |D\hat h| + |D^2 \hat h|\leq \hat \ep$ in these coordinates.
The proof of the estimates in   \cite{SimonC01} and  \cite{shi}  is still valid for $\hat g$ and we obtain
$\hat t |\gradhath \hat g|^2(\cdot,\hat t)   + 
\hat t^2 |\gradhath^2 \hat g|^2(\cdot,\hat t)| \leq 1$ for all $\hat t\leq 1$.
Using these estimates for $\hat t=1$, with  the estimates for $\hat h$ in  geodesic coordinates and  the fact that $ \frac{1}{(1+ 2\hat \ep )}\hat h \leq \hat g(t)  \leq (1+2\hat \ep )\hat h$ for all $\hat t\leq 1$ (a scale invariant fact),
we see, by interpolating,  that $|\Rm (\hat g)(p,1)| \leq \psi(\hat \ep)$. This implies
$|\Rm (g)(p,t_0)| \leq \frac{\psi(\hat \ep)}{t_0}$ as required.\\
If the condition on the injectivity radius near $p$ is not satisfied, then we lift $\hat g$ and $\hat h$ to a ball of radius one using the exponential map, and repeat the argument there to obtain the same conclusion.
\end{proof}

Using the $h$-Ricci DeTurck flow in this $C^0$ setting,  and its relation to the Ricci flow,  it is possible to show that certain curvature bounds from below will, up to a constant, be preserved.  
In the next section, Section \ref{sec-ricci-flow-non-collapsed-non-neg}, we  will see that some similar results also  hold in a more general setting, for some, but not all,  of the curvature conditions considered here.

We gather a number of  results in the theorems below.
Burkhardt-Guim studied the setting of scalar curvature bounded below in the setting of $C^0$ metrics as stated, \cite{Burk-Guim1, Burk-Guim2}.

\begin{thm}(Burkhardt-Guim  \cite{Burk-Guim1})  \label{c0scalar}
 Let $M^n$ be closed, smooth, $g(0)$ a $C^0$   metric on $M$  and $k\in \R,$ and assume there exists a sequence of smooth 
metrics $g_i(0)$ such that $|g_i(0)-g(0)|_{C^0}\to 0$ as $i\to \infty,$  such that 
   $\Sc(g_i(0)) \geq  k -\frac 1 i.$  
Then the solution to the $h$-Flow ($h=g_i(0)$ for $i$ large and fixed) $ g(t)$ for $t\in [0,T(n,h)]$ from Theorem \ref{C0RicciDeturckthm} satisfies   $\Sc(g(t)) \geq k$ for all $t \in (0,T).$
\end{thm}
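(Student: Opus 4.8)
The plan is to work with the $h$-Ricci DeTurck flow $g(t)$ produced by Theorem \ref{C0RicciDeturckthm} (with $h = g_i(0)$ for $i$ large and fixed), and transfer the scalar curvature lower bound through the associated Ricci flow $\ell(t) = (\Phi(t))^*g(t)$, which is isometric to $g(t)$ for each $t>0$ and therefore has the same scalar curvature pointwise. Since $h = g_i(0)$ is smooth with $\Sc(h) \ge k - \tfrac1i$, and the solution can be extended smoothly down to $t=0$ when the initial datum is smooth (last sentence of Theorem \ref{C0RicciDeturckthm}), the Ricci flow $\ell(t)$ is a genuine smooth solution on $[0,T)$ with smooth initial metric $g_i(0)$. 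On a \emph{closed} manifold the evolution equation for scalar curvature under Ricci flow is $\partt \Sc = \Delta_{\ell(t)} \Sc + 2|\Ric(\ell(t))|^2 \ge \Delta_{\ell(t)}\Sc$, so by the scalar maximum principle (the closed-manifold case, exactly item (iv) in the introductory list, due to Hamilton \cite{ham3d}) the minimum of $\Sc(\ell(t))$ is non-decreasing in $t$, hence $\Sc(\ell(t)) \ge k - \tfrac1i$ for all $t \in [0,T(n,h)]$, and therefore $\Sc(g(t)) \ge k - \tfrac1i$ for all such $t$.

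First I would make the dependence of the existence time $T$ explicit and uniform: by Theorem \ref{C0RicciDeturckthm}, $T = T(n,\hat\ep, K_0)$ depends only on $n$, on the fairness constant, and on $K_0 = \sup_M|\Rm(h)|$. For $i$ large, $g_i(0)$ is $C^0$-close to $g(0)$, hence one may fix a single background metric $h$ (for instance a smooth mollification of $g(0)$, or just one fixed $g_{i_0}(0)$) and view all the $g_i(0)$ as $C^0$ data with respect to that common $h$, so that the existence times and the estimates $\tfrac{1}{1+2\hat\ep}h \le g_i(t) \le (1+2\hat\ep)h$, $|\gradh^j g_i(t)|^2 \le c_j(n,h)t^{-j}$ hold on a common interval $(0,T)$ with constants independent of $i$. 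This uniformity is what lets us pass to the limit.

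The second and decisive step is the limit $i \to \infty$. Since $|g_i(0) - g(0)|_{C^0} \to 0$, the $h$-Ricci DeTurck flows $g_i(t)$ converge: using the uniform bounds above together with the standard bootstrap/interpolation argument for \eqref{Meq} (parabolic estimates on the interior, as in \cite{SimonC01}), one gets local smooth convergence $g_i(t) \to g(t)$ on $M \times (0,T)$, where $g(t)$ is precisely the solution from Theorem \ref{C0RicciDeturckthm} attached to the data $g(0)$ --- uniqueness of the $h$-flow in this class identifies the limit. Smooth convergence on $M\times(0,T)$ implies $\Sc(g_i(t)) \to \Sc(g(t))$ pointwise, so the inequality $\Sc(g_i(t)) \ge k - \tfrac1i$ passes to the limit: $\Sc(g(t)) \ge k$ for all $t \in (0,T)$.

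The main obstacle is the limit step, specifically justifying that the limit of the $h$-Ricci DeTurck flows of the approximating smooth metrics is the $h$-Ricci DeTurck flow of the $C^0$ datum $g(0)$, with convergence strong enough (local $C^2$, hence $C^\infty$ by bootstrapping) to control $\Sc$. This requires the uniform-in-$i$ interior estimates of Theorem \ref{C0RicciDeturckthm} --- which hold because the constants there depend only on $n$, $\hat\ep$, and the geometry of $h$, not on the approximating metric --- together with a uniqueness statement for the $h$-flow in the class of solutions satisfying those bounds and attaining $g(0)$ continuously as $t\downto 0$; both are available from \cite{SimonC01, SimonC02}. Everything else (the scalar maximum principle on the closed manifold, the isometry invariance of $\Sc$) is standard.
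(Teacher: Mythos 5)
Your proposal is correct and follows essentially the same scheme as the paper: the survey itself gives no proof of Theorem \ref{c0scalar} (it cites \cite{Burk-Guim1}), but your argument --- flow the smooth approximants $g_i(0)$ by the $h$-flow with a common background, use Hamilton's maximum principle on the isometric Ricci flow related solutions to keep $\Sc \geq k-\tfrac1i$, and pass to the limit using the uniform estimates of Theorem \ref{C0RicciDeturckthm} together with uniqueness/$C^0$-stability of the $h$-flow from \cite{SimonC01, SimonC02} --- is exactly the approximate-and-pass-to-the-limit route the paper uses for the companion Theorem \ref{Schlichting} and that underlies the cited result. The only point you should be explicit about is the limit identification (uniqueness or a $C^0$ stability estimate for two $h$-flow solutions with the same background), which you correctly flag as the essential ingredient and which is indeed available in the quoted references.
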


\begin{thm}( Schlichting \cite{PhDschlichting},  Richard \cite{Richard} , Simon (non compact case: proof below) ).\label{Schlichting}
 Let $M^n$ be smooth, connected,   $g(0)$ a $C^0$  Riemannian  metric on $M,$ so that $(M,g(0))$ is complete,   and assume there exists a sequence of smooth 
metrics $g_i(0)$ with $\sup_M |\Rm(g_i(0))|< \infty,$ $inj(M,g_i(0))>0,$ such that  $(M,g_{i}(0))$ is complete,   $|g_i(0)-g(0)|_{g(0)}\to 0$ as $i\to \infty,$  and  $k, k_i \in \R^+_0$    such that 
 $\curlR(g_i(0)) + k_i I   \in \curlC$  
for any of the curvature conditions $\curlC$ from    (i) -(vii) in \eqref{curvature-conditions},and $k_i \to k$ as $i\to \infty.$ 
Then there is a smooth $h$  and a solution to the $h$-Flow $ g(t)$ for $t\in [0,S(n,h,\max(k,1))]$ with inital value $g(0)$,     satisfying the estimates in and conclusions  of  Theorem \ref{C0RicciDeturckthm} and  
$\curlR(g(t)) + k C(n)I   \in \curlC$ for all $t\in  [0,S(n,h,\max(k,1))].$ 
\end{thm}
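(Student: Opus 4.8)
The plan is to reduce the assertion to the preservation, up to a dimensional factor and for a short time, of the curvature cone conditions along the smooth, complete, bounded--curvature Ricci flows attached to the approximating sequence, and then to pass to the limit in $i$. To this end, first fix the background metric. Since $g_i(0)\to g(0)$ locally uniformly and $g(0)$ is $C^0$, for $i$ large $g_i(0)$ is $(1+\tfrac{\ep(n)}{2})$--fair to $g(0)$; mollifying such a $g_i(0)$ slightly --- or applying Theorem~1.2 of Shi \cite{shi}, as in the remark after Theorem~\ref{C0RicciDeturckthm} --- produces a smooth, complete metric $h$ with $\sup_M|\grad^j\Rm(h)|<\infty$ for all $j$ which is $(1+\hat\ep)$--fair to $g(0)$ and, for all large $i$, to $g_i(0)$, where $\hat\ep\le\ep(n)$ may be taken as small as we wish. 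With this $h$ fixed, Theorem~\ref{C0RicciDeturckthm} yields a smooth, complete $h$--Ricci DeTurck flow $g(t)_{t\in(0,T)}$ with initial value $g(0)$ satisfying all the stated estimates and conclusions, and, for each large $i$, a smooth, complete $h$--Ricci DeTurck flow $g_i(t)$ with initial value $g_i(0)$ on the same interval, with the same estimates (these depend only on $n$, $h$, $\hat\ep$) and extending smoothly to $t=0$ since $g_i(0)$ is smooth. By Theorem~\ref{c0_curvature_control} there are $\hat T\in(0,T]$ and $\psi=\psi(n,\hat\ep)$, with $\psi\to 0$ as $\hat\ep\to 0$, such that $|\Rm(g(t))|,\,|\Rm(g_i(t))|\le\psi/t$ on $(0,\hat T)$, uniformly in $i$.

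Next, for each fixed large $i$, let $\Phi^i\colon M\times[0,T)\to M$ solve the DeTurck ODE \eqref{ODEDe} with base time $0$, so that $\Phi^i(\cdot,0)=\mathrm{id}$; since $g_i$ is smooth up to $t=0$, the standard DeTurck correspondence gives a smooth family of diffeomorphisms $\Phi^i_t$ for which $\ell_i(t):=(\Phi^i_t)^*g_i(t)$ is a smooth, complete Ricci flow with $\ell_i(0)=g_i(0)$, of bounded curvature on every compact subinterval of $[0,T)$ (near $t=0$ because $(M,g_i(0))$ has bounded curvature and $inj(M,g_i(0))>0$, and for $t$ bounded away from $0$ by the estimates of Theorem~\ref{C0RicciDeturckthm}). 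Since $\curlR(g_i(0))+k_i I\in\curlC$ and membership in $\curlC$ is pointwise and diffeomorphism invariant, $\curlR(\ell_i(0))+k_i I(\ell_i(0))\in\curlC$. Now I would run Hamilton's maximum principle for systems --- in the form valid on complete, bounded--curvature Ricci flows, which is the point where the non--compactness of $M$ is absorbed --- on the quantity $\curlR(\ell_i(t))+f(t)I(\ell_i(t))$. Each of the cones $\curlC_{CO},\curlC_{2CO},\curlC_{IC},\curlC_{IC1},\curlC_{IC2}$ (and, in dimension three, also $\curlC_{Ricci}$ and $\curlC_{Sec}$, via Hamilton's three--dimensional pinching estimates) is a closed, convex, $O(n)$--invariant cone preserved by the Hamilton reaction ODE $\tfrac{d}{dt}\curlR=\curlR^{2}+\curlR^{\#}$; estimating the off--cone contribution of the $\#$--term on the shifted cone $\{\,\curlR+f(t)I\in\curlC\,\}$ by a quantity controlled by $f$ and by $|\curlR|\le\psi/t$, with $\psi$ small, one produces a Riccati--type comparison function $f$ with $f(0)=k_i$ and $f(t)\le C(n)k_i$ on $[0,S]$, $S=c(n)/\max(k,1)$, for which $\{\,\curlR+f(t)I\in\curlC\,\}$ is a preserved tube. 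Hence $\curlR(\ell_i(t))+C(n)k_i I\in\curlC$ on $(0,S]$ and, transporting back by $\Phi^i_t$, $\curlR(g_i(t))+C(n)k_i I(g_i(t))\in\curlC$ on $(0,S]$; the constants $C(n)$ and $S=c(n)/\max(k,1)$ are uniform in $i$ because the only inputs are $n$, the uniform bound $\psi/t$ and $k_i\le\max(k,1)+1$.

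Finally I would pass to the limit. The uniform estimates $|\grad^j_h g_i(t)|^2\le c_j(n,h)/t^j$ together with Arzel\`a--Ascoli produce a subsequential limit of the $g_i(t)$, locally smoothly on $M\times(0,T)$; this limit is an $h$--Ricci DeTurck flow attaining $g(0)$ locally uniformly, so by uniqueness within the class of such flows satisfying these a priori bounds (see \cite{SimonC01,SimonC02}) it equals $g(t)$. Therefore $\curlR(g_i(t))\to\curlR(g(t))$ and $I(g_i(t))\to I(g(t))$ locally uniformly on $(0,T)$, and, $\curlC$ being closed and $k_i\to k$, we obtain $\curlR(g(t))+C(n)k\,I(g(t))\in\curlC$ for all $t\in(0,S]$, which is the claim (after relabelling $C(n)$). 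The main obstacle is the maximum--principle step: constructing the preserved tube $f$, i.e.\ the quantitative, dimensionally controlled preservation of the shifted cone condition along a Ricci flow with curvature $\le\psi(n,\hat\ep)/t$, and arranging that the comparison functions and cut--offs needed to run the maximum principle on the non--compact manifold are compatible with this controlled degradation.
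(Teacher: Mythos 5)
Your overall architecture matches the paper's: fix a background metric $h$ via Shi's theorem applied to one of the $g_i(0)$, flow each smooth $g_i(0)$ by the $h$-Ricci DeTurck flow of Theorem \ref{C0RicciDeturckthm}, use Theorem \ref{c0_curvature_control} to get the uniform bound $|\Rm(g_i(t))|\leq \psi(n,\hat\ep)/t$, pass to the associated Ricci flows $\ell_i(t)$ by the DeTurck diffeomorphisms, prove a shifted-cone preservation statement for $\ell_i$, transport back, and let $i\to\infty$. The decisive step, however, is the one you sketch differently, and as sketched it fails. You propose to preserve the tube $\{\curlR+f(t)I\in\curlC\}$ with a purely time-dependent shift $f$, controlling the off-cone cross terms by $C\,f\,(|\curlR|+f)\leq C f(\psi/t+f)$ and closing a Riccati comparison with $f(0)=k_i$, $f\leq C(n)k_i$. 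But the linear coefficient $\psi/t$ is not integrable at $t=0$: any supersolution of $f'\geq C\psi f/t$ with $f(0)=k_i>0$ satisfies $f(t)\geq k_i\lim_{\epsilon\downto 0}(t/\epsilon)^{C\psi}=\infty$, no matter how small $\psi(n,\hat\ep)$ is. Smallness of $\psi$ cannot rescue a pure ODE-in-$t$ barrier; this is exactly why the proofs the paper relies on do not use one. The technique of \cite{SimonCrelle3D}, carried out for the cones (i)--(vii) in \cite{PhDschlichting} and \cite{Richard}, instead adds a solution-dependent term and works with $\curlT(t)=\curlR(\ell(t))+t\,m\,A(n)\Sc(\ell(t))\,I+(1+Z(n)t)\,m\,I$: the heat-operator evolution of $t\Sc(\ell(t))$ produces a non-negative term of size $\Sc$ at any contact point, which is what absorbs the dangerous cross term of size $m\cdot\Sc\sim m\psi/t$ (compare also \eqref{estimate_ell} in \cite{Bam-Cab-Riv-Wil}, where the linear coefficient is precisely $\Sc$ and is handled by heat-kernel bounds, not by an ODE). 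So either you must reproduce that refined ansatz (with the attendant requirement $|\Rm(\ell_i(t))|\,t\leq\si(n)$ small), or simply cite Schlichting and Richard for this step, as the paper does.

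Two smaller points. First, your claim that $\ell_i$ has bounded curvature near $t=0$ "because $(M,g_i(0))$ has bounded curvature" is not a proof: the available bound $\si/t$ blows up as $t\downto 0$, and the persistence of the initial curvature bound for a short time is itself a theorem; the paper invokes Theorem 1.2 of \cite{Lee2} (see also \cite{Sim08}, \cite{BLChen09}) exactly here, and this bounded-curvature property is needed to run the maximum principle on the non-compact $M$. Second, you identify the limit of the $g_i(t)$ with the flow of $g(0)$ via a uniqueness assertion for rough-data DeTurck flows; this is an unnecessary extra ingredient, since the theorem is an existence statement and the paper simply takes the limit of the $g_i(t)$ (with the uniform estimates of Theorem \ref{C0RicciDeturckthm}) as the solution $g(t)$, the convergence $g_i(t)\to g(0)$ as $t\downto 0$, uniformly in $i$, coming from the construction. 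Finally, note that your restriction of the $\curlC_{Ricci}$ and $\curlC_{Sec}$ cases to $n=3$ via Hamilton's pinching does not cover conditions (i) and (ii) as stated in the theorem for general $n$, whereas the cited results of Richard are formulated to cover all the listed cases.
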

\begin{rem} \label{RemSchlichting}
There is also a Ricci flow related  solution $\ell(t)$ to $g(t),$ satisfying $\curlR(\ell(t)) + k C(n)I   \in \curlC,$ where 
 $\ell(t):= (\Phi_t)^{*} g(t),$ where $\Phi:M \times(0,T) \to M$ is a solution to \eqref{ODEDe} with $S=\frac{T}{2},$ and  $\Phi_t := \Phi(\cdot,t)  $ are diffeomorphisms for $t\in(0,T),$  $\Phi_t \to \Phi_0$ locally uniformly as $t\downto 0$ where $\Phi_0$ is a   homeomorphism, 
 $d(\ell(t)) \to  d_0$ locally uniformly, where  $d_0$ is given by $d_0:= \Phi_0^{*}(d(g(0)))$ 
\end{rem}

{\it Proof of Theorem \ref{Schlichting} and Remark \ref{RemSchlichting} } 
 By using Shi's Theorem (Theorem 1.2 of \cite{shi}) applied to one of the metrics $\hat h := g_{k}(0)$ we can find a metric $h$ with
 $\sup_M |\grad^j \Rm(h)|^2 < \infty,$  for all $j\in \N_0,$  $inj(M,h)\geq i_0>0$   and $|g_{i}(0)-h|\leq \hat \ep,$ 
 for all $i\in \N$, after taking a subsequence if necessary, where  $\hat \ep$ as small as we like.  
Hence without loss of generality,  $|g_i(0)-h|_h \leq \hat \ep  \leq \ep(n)$ for all $i\in \N$, where $\hat \ep>0$ is a constant as small as we like.
Let $  g_i(t)_{t\in [0,T(n,h)]}$ be the Ricci DeTurck $h$-flow solution 
from Theorem \ref{C0RicciDeturckthm} with initial value $g_i(0)$. 
 By  choosing $\hat \ep$ small enough, and reducing $T$ if necessary,  we have, 
 $ \sup_M |\Rm(g_i(t))| \leq  \frac{\si(n)}{t}$ for all $t\in (0,T)$
 for all $i\in \N,$ due to   Theorem  \ref{c0_curvature_control}, where $\si(n)$ is a small constant of our choosing.
We choose $S=T/2$ and  define $\Phi_i(x,S)=x,$ and let $\Phi_i: M \times [0,T] \to M$ be the solution to \eqref{Meq}. Then $\ell_i(t)= (\Phi_i)(\cdot,t)^*(g_i(t))_{t\in (0,T]}$ is a  smooth solution to Ricci flow, in view of Theorem \ref{C0Ricciflow-Thm}. 
Furthermore,  Theorem \ref{C0Ricciflow-Thm}, tells us that 
 $\Phi_i(t)\to \Phi_i(0)$ locally uniformly  as $t\downto 0,$ where $\Phi_i(0):M \to M$ is a homeomorphism. In this instance the convergence is locally smooth and $\Phi_i(0)$ is a diffeomorphism, in view of the fact that $g_i(0)$ is smooth.
 Hence $\ell_i(t) = (\Phi_i(t))^{*}g_i(t)$ is a smooth solution to
 Ricci flow with initial smooth value $\ell_i(0):= (\Phi_i)^*(g_i(0))$, and satisfies $|\Rm(\ell_i(t))| \leq \frac{\si}{t}$ and $\sup_M |\Rm(\ell_i(0))|< \infty$. Theorem 1.2 of  Lee's paper, \cite{Lee2} (see also Simon \cite{Sim08}, Chen \cite{BLChen09}),    now guarantees that
 $(M^n,\ell_i(t))_{t \in [0,T]}$ is a bounded curvature solution.
 \\ 
In the   Ph.D. Thesis of Schlichting, \cite{PhDschlichting}, 
it was shown for Ricci flows  $(M,\ell(t))_{t\in [0,T]}$ satisfying   $|\Rm(\ell(t))|\leq \frac{\si(n)}{t},$ with $\si(n)>0$ sufficiently small,   and $\sup_{M\times [0,T]} |\Rm(\ell)|< \infty $ 
and   $\curlR(\ell(0)) +  m I \in \curlC_{IC},$ (condition (v) of the curvature conditions \eqref{curvature-conditions}) that then 
$\curlR(\ell(t)) +  C(n) m I \in \curlC_{IC}$ for all $t\in [0,R(n,\max(m,1) )]\cap [0,T].$
In Richard, \cite{Richard} all the other cases were shown:  
  $\curlR(\ell(0)) +  m I \in    \curlC $  then 
$\curlR(\ell(t)) +  C(n) m I \in  \curlC$  for all $t\in [0,R(n,\max(m,1))]\cap [0,T] $ for all the remaining curvature conditions $\curlC$, (i)-(iv) and (vi), (vii)  in \eqref{curvature-conditions}. These estimates  were  proven by using the technique introduced in Simon \cite{SimonCrelle3D} (see the next section), which involves adding   a factor of $t\Sc(t)$  and a constant to the tensor being considered  and calculating the evolution equation thereof. 
 One shows that 
\begin{eqnarray*}
  \curlT(t) := \curlR(\ell(t)) + t m A(n)\Sc(\ell(t)) +   (1+Z(n)t)m I 
  \end{eqnarray*}
   remains in $\curlC$ in view of  the maximum principle  of  Hamilton, and the evolution equation satisfied by $\curlT$,   with the help of the fact that
     $|\Rm(\ell(t))|\leq \frac{\si(n)}{t}$ holds, and  $\si(n)>0$ is  sufficiently small.   

Hence in our setting  
$\curlR(\ell_i(t)) +  C(n) k_i I  \in  \curlC$  for all the curvature conditions $\curlC$  in (i)-(vii).
This implies the same for $g_i(t),$ as $g_i(t)$ is  isometric to $\ell_i(t).$ Taking a limit
in $i$ and using these estimates  and those of Theorem \ref{C0RicciDeturckthm}, gives a solution $g(t)$ to the Ricci DeTurck flow satisfying  $\curlR(g(t)) +  C(n) k I  \in \curlC,$ 
 for all $t\in [0,S(n,k)]$ as required.
 Also, taking a limit in $i$, we obtain  a solution $\Phi:M \times (0,T) \to M$ and an  $\ell(t)$ to Ricci flow, as described in the Remark: The fact that $\Phi_t:M \to M$ is a diffeomorphism for all $t\in (0,T),$ and $\Phi_t $ converges locally uniformly to a homeomorphism $\Phi_0$ follows from the proof of  Theorem 2.6 in \cite{LammSimon}.  
 $\Box$

\begin{rem}
The  proof  in Schlichting and Richard, of the fact  that $\curlT$ remains non-negative requires crucially that   $|\Rm(\ell(t))|t \leq  \si(n),$ with $\si(n)$ small enough: 
if one has a solution with  $|\Rm(\ell(t))|t \leq  c^2_0,$ for some general $c_0$, the proof that $\curlT$ remains non-negative,  possibly with  $A$ and $Z$   now also depending on $c_0$, given in Schlichting and Richard,    fails. This should  be compared    
to the special case of $n=3,$ where it was  shown in \cite{SimonCrelle3D} that for any smooth, connected, complete solution $(M,\ell(t))_{t\in [0,T]}$ to Ricci flow with \\ $\sup_{M \times [0,T]} |\Rm(\ell(t))| < \infty$ , then   $  \Ricci(\ell(t)) + (1+100t)  k (t \Sc(\ell(t))) +  k  I (1+100t) \ell(t) $  remains non-negative  for $t\in [0,1/100]\cap [0,T],$ $k\in [0,1]$  if it is non-negative at time zero,   {\bf without any further assumptions }: see   Section   \ref{sec-ricci-flow-non-collapsed-non-neg}. 
 \end{rem}

\begin{rem}
In \cite{SimonCrelle3D} for the case $n =3$
and \cite{Bam-Cab-Riv-Wil} for general $n\in \N$ the authors showed that similar    curvature lower bound estimates to those appearing in the theorem, with the exception of positive isotropic curvature, hold in  a more general setting : See the next section.

\end{rem}
 
\begin{rem}
The stronger condition  that the initial metric is  Lipschitz 
was considered in the paper \cite{SimLip}.
In that case, one obtains improved estimates.  For example in the setting of  Theorem \ref{C0RicciDeturckthm}, with the extra initial  condition $|\grad^h g(0)|\leq c_0$ one obtains 
$|\Rm(g(t))|\leq \frac{c}{\sqrt t}$ 
\end{rem}

 \begin{rem}
There are  similar results in the K\"ahler setting : See for example the paper \cite{ManChunTam}.
 
\end{rem}

\section{Non-collapsed limits of smooth spaces with curvature lower bounds}\label{sec-ricci-flow-non-collapsed-non-neg}

As explained in the introduction, assuming $M$ is smooth, closed (or that   one has a bounded curvature  solution to Ricci flow), the non-negativity of a number of  curvature conditions is   preserved by the Ricci flow.

In the author's Habilitation Thesis, \cite{SimonHabil}  and the subsequent paper \cite{SimonCrelle3D}  the author considered the problem of which negative  lower  bounds on curvature will, up to a constant, be preserved.  
The following  Lemma shows that there is some quantitative preservation, up to   constants,   of various lower bounds of curvature in three dimensions.
\begin{lem} (\cite{SimonCrelle3D})
Let $(M^3,g(t))_{t\in [0,T]}$ be a smooth, complete, connected three dimensional solution to Ricci flow which is compact or satisfies $\sup_{M \times[0,T]}|\Rm(g) |_{g} < \infty$.\\
Then for $0\leq \ep \leq \frac{1}{100}$  the following holds: 
  \begin{itemize}
 \item[(a)] 
    $  \Rc(g(0)) \geq -\ep\Sc(g(0)) g(0),$ \\ $\implies$  
    $\Rc(g(t)) \geq -\ep(1+4t) g(t)-\ep(1+4t)\Sc(g(t))  g(t)$   {\rm for all}  $t \in [0,T]\cap[0,\frac{1}{8}]$ 
  \item[(b)] 
$ \sec(g(0)) \geq  -\ep $
\\ $\implies$   $\sec(g(t)) \geq  -\ep(1+4t)  -\ep(1+4t)\Sc(g(t))$    {\rm for all} $t \in [0,T]\cap[0,\frac{1}{8}]$
  \item[(c)]  
 $\Rc(g(0)) \geq -\ep g(0) $  \\ $\implies$  
 $\Rc(g(t)) \geq -\ep(1+100t) g(t)-\ep(1+100t)t\Sc(g(t)) g(t)  $ { \rm for all } $t \in [0,T]\cap[0,\frac{1}{200}]$
 \item[(d)]
$\sec(g(0)) \geq  -\ep$ \\ $\implies$  
    $\sec(g(t)) \geq  -\ep(1+100 t)  -\ep(1+100t)t \Sc(g(t))$
  { \rm for all } $t \in [0,T]\cap[0,\frac{1}{200}]$
 \end{itemize}
\end{lem}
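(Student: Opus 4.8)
\emph{Approach.} The plan is to reduce the statement, via Uhlenbeck's trick and Hamilton's maximum principle, to a pointwise algebraic inequality for the ODE system governing the reaction term of the curvature operator, and to absorb the $\Sc(g(t))$ on the right-hand side of the barrier by enlarging the curvature operator by a time-dependent multiple of $\Sc\,I$ (this is the device introduced in \cite{SimonCrelle3D}). In dimension three, after Uhlenbeck's trick the curvature operator $\curlR(g(t))$, viewed as a self-adjoint endomorphism of $\Lambda^2(T_pM)\cong\R^3$ with the inner product and $I$ fixed in $t$, evolves by $\partt\curlR=\Delta\curlR+\curlR^2+\curlR^{\#}$ and is simultaneously diagonalised by an ordered triple $a\geq b\geq c$ of eigenvalues satisfying $\dot a=a^2+bc$, $\dot b=b^2+ac$, $\dot c=c^2+ab$, the ordering being preserved by the ODE. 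In the diagonalising frame the Ricci eigenvalues are $b+c,\ a+c,\ a+b$, the minimal sectional curvature is $c$, and $\Sc=2(a+b+c)$, so (a),(c) concern the least Ricci eigenvalue and (b),(d) the least eigenvalue $c$. Tracing the hypotheses, (a) forces $\Sc(g(0))\geq0$ and (b) forces $\Sc(g(0))\geq-6\ep$; hence, by preservation of scalar curvature lower bounds, $\Sc(g(t))\geq0$ respectively $\Sc(g(t))\geq-6\ep>-1$ for all $t$, which keeps $1+\Sc(g(t))$ (and $1+t\Sc(g(t))$ in (c),(d)) strictly positive along the flow.

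\emph{The auxiliary operator.} I would set $\curlT(t):=\curlR(g(t))+\alpha(t)\,I(g(t))+\beta(t)\,\Sc(g(t))\,I(g(t))$, taking $\alpha(t)=\beta(t)=c_0\ep(1+4t)$ on $[0,\tfrac18]$ for (a),(b), and $\alpha(t)=c_0\ep(1+100t)$, $\beta(t)=c_0\ep(1+100t)\,t$ on $[0,\tfrac1{200}]$ for (c),(d), where $c_0$ is the normalisation constant for which the condition $\curlT(t)\in\curlC_{Ricci}$ (for (a),(c)), respectively $\curlT(t)\in\curlC_{Sec}$ (for (b),(d); recall that $\curlC_{Sec}=\curlC_{CO}$ in dimension three), is equivalent to the barrier asserted; note that $\beta(0)=0$ in (c),(d), matching the $\Sc$-free initial hypotheses there. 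Using $\partt\Sc=\Delta\Sc+2|\Ric|^2$, in the Uhlenbeck gauge one finds $\partt\curlT=\Delta\curlT+\mathcal{Q}(\curlT)$ with $\mathcal{Q}(\curlT)=\curlR^2+\curlR^{\#}+\big(\alpha'(t)+\beta'(t)\Sc+2\beta(t)|\Ric|^2\big)I$, where $\curlR,\Sc,|\Ric|^2$ are expressed through $\curlT$. Since $\curlC_{Ricci}$ and $\curlC_{Sec}$ are closed, convex and invariant under parallel transport, Hamilton's maximum principle (in the complete non-compact case applied with the help of the bounded-curvature hypothesis) reduces the Lemma to a pointwise claim: at every $S=\curlT$ on the boundary of the relevant cone, and every $t$ in the relevant interval, the supporting functional is non-negative on $\mathcal{Q}(S)$. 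For $\curlC_{Ricci}$, with $v$ a unit generator of $\ker\Ric(S)$ (the $(b+c)$-direction), using $\Ric(I)(v,v)=2$ and $\Ric(\curlR^2+\curlR^{\#})(v,v)=b^2+c^2+a(b+c)$, this reads $b^2+c^2+a(b+c)+2\big(\alpha'(t)+\beta'(t)\Sc+2\beta(t)|\Ric|^2\big)\geq0$ on the locus $b+c+2\big(\alpha(t)+\beta(t)\Sc\big)=0$ (with $a\geq b\geq c$ and $\Sc=2(a+b+c)\geq0$); for $\curlC_{Sec}$ one replaces $b^2+c^2+a(b+c)$ by $c^2+ab$, the factor $2$ before the bracket by $1$, and the locus by $c+\alpha(t)+\beta(t)\Sc=0$.

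\emph{The main point.} The heart of the matter, and what I expect to be the main obstacle, is verifying this algebraic inequality on the indicated locus; it is where the constants $4,\tfrac18$ (respectively $100,\tfrac1{200}$) get calibrated. The mechanism is that the enlargement by $\beta\Sc\,I$ produces in the reaction the term $2\beta|\Ric|^2$, which is quadratically large and positive (with $|\Ric|^2\geq\tfrac13\Sc^2$), and dominates the only potentially negative term $a(b+c)$ (respectively $ab$): on the relevant locus $b+c=-2(\alpha+\beta\Sc)\leq0$ (respectively $c=-(\alpha+\beta\Sc)\leq0$), by the sign of $1+\Sc$ established above, so the dangerous term is $\leq0$ but, in the regime where $a$ and hence $\Sc$ is large, is only of size $\sim\alpha\Sc$ whereas $2\beta|\Ric|^2\sim\beta\Sc^2$ overwhelms it with a definite margin; in the complementary regime, all eigenvalues of size $O(\ep)$, the constant $2\alpha'(t)\sim\ep$ dominates the remaining $O(\ep^2)$ terms. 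Interpolating between these regimes, and using $\ep\leq\tfrac1{100}$ together with the time bound to keep $1+4t$ (respectively $1+100t$, and the extra factor $t$ inside $\beta$ in (c),(d)) in the range in which all error terms are absorbed, closes the estimate; the sectional cases are structurally identical, the dangerous term $ab$ being controlled by the same device since on its null locus $|b|\leq|c|$ stays comparably small. Feeding the pointwise inequality back into Hamilton's maximum principle gives $\curlT(t)\in\curlC_{Ricci}$ (respectively $\curlC_{Sec}$) on the stated interval, which is precisely the claimed lower bound on $\Rc(g(t))$ (respectively $\sec(g(t))$).
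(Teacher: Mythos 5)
Your proposal is correct and follows essentially the same route as the paper's (sketched) argument: one augments the curvature quantity by $\alpha(t)$ and $\beta(t)\Sc(g(t))$ times the metric/identity and applies Hamilton's maximum principle, the decisive positive contribution in the reaction coming from the evolution of $\Sc$ (respectively $t\Sc$), i.e.\ the $2\beta|\Ric|^2$ and $\alpha',\beta'$ terms. The only difference is presentational: the paper illustrates the argument directly with the tensor maximum principle applied to $T=\Rc+\ep(1+100t)g+\ep(1+100t)t\Sc\,g$, while you recast all four cases in the Uhlenbeck/curvature-operator cone formulation with the eigenvalue ODE $\dot a=a^2+bc$, etc., which is an equivalent implementation of the same idea.
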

\noindent{\it Proof ideas}.\\ 
The proof of the Lemma involves examining the evolution of Tensors/ geometric inequalities under the Ricci flow and using the Maximum principle of \cite{ham3d, ham4d}.
For example in case (c), on a closed manifold,  one first writes  the evolution equation  of the tensor $T(\cdot,t)  =  \Rc(g(t))  +   \ep(1+100t) g(t)+   \ep(1+100t)t\Sc(g(t)) g(t) $ 
as $\partt T = \lap_g T  + L$ : with the help of the evolution equations 
 for the Ricci curvature and scalar curvature computed in \cite{ham3d}, one  can explicitly calculate $L$. Then, using the Maximum Principle of Hamilton, one shows that  $T(\cdot,0) > 0 \implies T(\cdot,t) > 0.$
 The term  in $L$ coming from $(\partt -\lap_{g(t)})(t\Sc(g(t))),$ is a non-negative  term, and plays an important role 
 in the maximum principle argument : It guarantees  that $L$ will be positive at any potential first time $t_0>0$ and point $p_0$ and non-zero direction $v_0$ where $T(p_0,t_0)(v_0,v_0) =0$ 
$\Box.$\\
We notice that if we restrict ourselves to the class of smooth, complete, connected three dimensional    solutions $(M,g(t))_{t\in (0,T)}$ as in case (c) and (d) 
of the Lemma above, but further assume that 
 \begin{eqnarray*}
&& \Sc(g(\cdot,t))\leq \frac{c^2_0}{t}  \ \text{\rm for all }  t \in [0,T],
 \end{eqnarray*}
  then (c) and (d) of the above Lemma implies the following:
  \begin{lem}
 Let $(M^3,g(t))_{t\in [0,T]}$ be a smooth, complete, connected three dimensional solution to Ricci flow which is compact or satisfies $\sup_{M \times[0,T]}|\Rm(g) |_{g} < \infty$.\\
 Assume for some $\ep\in [0,\frac 1 {100}]$ that 
    $\Rc(g(0)) \geq  -\ep g(0) $      $( \sec(g(0)) \geq -\ep )$  and that the solution satisfies  $|\Sc(g(t))| \leq \frac{c^2_0}{t}.$  
    Then   
 $\Rc(g(t)) \geq -\ep (1+ c^2_0 )(1+100t)  g(t)    $  
 $( \sec(g(t)) \geq  -\ep(1+c^2_0) (1+100 t) )$   { \rm for all } $t \in [0,T]\cap[0,\frac{1}{200}]$.  
\end{lem}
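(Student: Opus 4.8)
The plan is to deduce this immediately from parts (c) and (d) of the preceding Lemma, whose conclusions carry the extra term $t\Sc(g(t))$, and then to absorb that term using the standing hypothesis $|\Sc(g(t))| \leq c_0^2/t$, which gives $t\Sc(g(t)) \leq c_0^2$ for all $t>0$.

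First I would fix $t \in [0,T]\cap[0,\frac{1}{200}]$ and, assuming $\Rc(g(0)) \geq -\ep g(0)$, invoke part (c) of the preceding Lemma to obtain the inequality of symmetric $2$-tensors
\[
\Rc(g(t)) \geq -\ep(1+100t)\, g(t) - \ep(1+100t)\, t\Sc(g(t))\, g(t).
\]
Then I would note that $\ep(1+100t) \geq 0$ and that $g(t)$ is positive definite, so that multiplying the scalar inequality $t\Sc(g(t)) \leq c_0^2$ by the non-positive factor $-\ep(1+100t)$ reverses it and yields $-\ep(1+100t)\, t\Sc(g(t))\, g(t) \geq -\ep(1+100t)\, c_0^2\, g(t)$ as symmetric $2$-tensors; substituting this into the display above gives $\Rc(g(t)) \geq -\ep(1+c_0^2)(1+100t)\, g(t)$, which is the asserted Ricci bound. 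The case $t=0$ is trivial, since $\Rc(g(0)) \geq -\ep g(0) \geq -\ep(1+c_0^2)\, g(0)$. The sectional-curvature statement follows by the identical argument from part (d) of the preceding Lemma, which gives $\sec(g(t)) \geq -\ep(1+100t) - \ep(1+100t)\, t\Sc(g(t))$, into whose second, non-positively weighted, summand one again substitutes $t\Sc(g(t)) \leq c_0^2$.

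I do not expect any genuine obstacle here: the analytic heart of the matter, namely the maximum-principle argument and the choice of the auxiliary tensor involving $t\Sc$, is entirely contained in the preceding Lemma, and what is left is the elementary bookkeeping above. The only point deserving a moment's attention is keeping track of the direction of the inequality when the $t\Sc$ term is multiplied by the non-positive coefficient $-\ep(1+100t)$, together with the observation that the degeneration of the bound $|\Sc(g(t))| \leq c_0^2/t$ at $t=0$ causes no trouble because the conclusion there holds for trivial reasons.
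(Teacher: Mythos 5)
Your proposal is correct and follows exactly the route the paper intends: the paper offers no separate argument for this lemma beyond remarking that parts (c) and (d) of the preceding Lemma imply it once $t\Sc(g(t))\leq c_0^2$ is substituted, which is precisely your bookkeeping. The sign check for the factor $-\ep(1+100t)$ and the trivial $t=0$ case are handled correctly.
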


Once we are in this setting, it is known    that estimates on distance , $d_t(\cdot ,\cdot)$,   where $d_t(x,y) = d(g(t))(x,y)$ denotes the distance measured with respect to $g(t)$ between the two points $x$ and $y$,  and volume of metric balls, measured with respect to the evolving metric,  can be proven.
 We write the  version shown in  \cite{SimonCrelle3D}, although local versions also exist : see   \cite{SiTo1} for some  local versions. 
 \begin{thm}(\cite{SimonCrelle3D}) \label{distance_and_volume_estimates_ricci_flow}
 Let $(M^n,g(t))_{t\in [0,T]}$ be a smooth, connected complete solution to Ricci flow  with bounded curvature   satisfying
 \begin{eqnarray*}
 &&\sup_{x\in M, t\in[0,T)}  |\Rc(g(t))|(x)\leq \frac{c^2_0}{t}\\
 && \Rc(g(t)) \geq -k_0^2\\
 &&\vol(B_{g(0)}(x,1)) \geq v_0>0\\
 && \text{ \rm for all } \ x  \in M
 \end{eqnarray*}
 for some constants $0<v_0< \infty,1<c_0< \infty$.
 Then  
 \begin{eqnarray*}
 &&    e^{k^2_0(t-s)}d_s(x,y) \geq  d_t(x,y) \geq d_s(x,y) - \ga(n)c_0\sqrt{s-t} \\
 && \vol(B_{g(t)}(x,1)) \geq \frac{v_0}{2}>0\\
 && \text{ \rm for all } \ x,y \in M, \ s \leq t, \ t,s  \in [0,T]
 \end{eqnarray*}
 \end{thm}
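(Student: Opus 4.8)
The plan is to treat the three conclusions in turn: the upper bound on $d_t$ is immediate from the lower Ricci bound, the lower bound on $d_t$ is the classical distance-distortion estimate fed with the $c_0^2/t$ curvature bound, and the volume lower bound — the delicate one — is obtained by propagating the initial non-collapsing forward along the flow. For the upper bound, note that $\partt g(t) = -2\Rc(g(t)) \le 2k_0^2\, g(t)$ by hypothesis, so integrating this pointwise differential inequality for the metric in $t$ gives $g(t) \le e^{2k_0^2(t-s)} g(s)$ as quadratic forms for $s \le t$. Bounding the $g(t)$-length of a fixed path joining $x$ to $y$ by $e^{k_0^2(t-s)}$ times its $g(s)$-length and minimising over such paths yields $d_t(x,y) \le e^{k_0^2(t-s)}d_s(x,y)$, the first inequality.

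For the lower bound I would use the standard distance-distortion estimate for Ricci flow (Hamilton, Perelman): if $\Rc(g(\tau)) \le (n-1)K$ on the $g(\tau)$-balls of radius $r_0$ about $x$ and about $y$, then $\tau \mapsto d_\tau(x,y)$ can decrease no faster than $c(n)(K r_0 + r_0^{-1})$. Substituting $\Rc(g(\tau)) \le (n-1)\tfrac{c_0^2}{(n-1)\tau}$ and choosing $r_0 = \sqrt\tau$ (the regime $d_\tau(x,y) \lesssim \sqrt\tau$ being absorbed into the usual modification of the estimate, where the claimed inequality is already trivial since $d_t \ge 0$) bounds the decrease rate by $\ga(n) c_0/\sqrt\tau$; integrating over $[s,t]$ gives $d_t(x,y) \ge d_s(x,y) - \ga(n) c_0(\sqrt t - \sqrt s)$, which is the stated lower bound.

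For the volume estimate the plan is as follows. First use the two distance inequalities with $s=0$ to sandwich metric balls; in particular $B_{g(0)}(x, e^{-k_0^2 t}) \subseteq B_{g(t)}(x,1)$, so it suffices to bound $\vol_{g(t)}(B_{g(0)}(x, e^{-k_0^2 t}))$ from below. At time $0$, Bishop--Gromov comparison (legitimate since $\Rc(g(0)) \ge -k_0^2 g(0)$) together with $\vol_{g(0)}(B_{g(0)}(x,1)) \ge v_0$ gives $\vol_{g(0)}(B_{g(0)}(x, e^{-k_0^2 t})) \ge \theta(n,k_0,t)\, v_0$ with $\theta \to 1$ as $t \to 0$. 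The main obstacle is then to compare this volume at time $0$ with the volume at time $t$ without any dependence on the (a priori uncontrolled) supremum of $|\Rm|$ near $t=0$: the evolution $\partt\, dV_{g(t)} = -\Sc(g(t))\, dV_{g(t)}$ with $\Sc(g(t)) \ge -nk_0^2$ controls the volume element only from above, while the only available upper bound $\Sc(g(t)) \le nc_0^2/t$ is not integrable at $t=0$, so the naive comparison $dV_{g(t)} \gtrsim dV_{g(0)}$ is unavailable. The way around this, following \cite{SimonCrelle3D} (see also the local versions in \cite{SiTo1}), is to argue at the level of balls and scales rather than of the volume form: the bound $|\Rc(g(\tau))| \le c_0^2/\tau$ forces the geometry at time $\tau$ to be controlled at scale $\sqrt\tau$, and one runs a non-collapsing argument in the spirit of Perelman that transports the value $v_0$ forward in time, losing at most a factor $2$ over the range of times in question. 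Combined with the ball sandwiching, this gives $\vol_{g(t)}(B_{g(t)}(x,1)) \ge v_0/2$.
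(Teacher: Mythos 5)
Your upper bound (integrating $\partt g \leq 2k_0^2 g$) and your lower bound (Hamilton/Perelman distance-distortion with the $|\Rc|\leq c_0^2/\tau$ bound and $r_0=\sqrt{\tau}$, then integrating $c_0/\sqrt{\tau}$) are essentially the paper's argument, which quotes Hamilton's integrated estimate $d_t \geq d_s - \ga(n)\int_s^t \sqrt{\sup_M|\Rc(g(r))|}\,dr$ directly; that part is fine, modulo the usual care in the regime where $d_\tau(x,y)\lesssim \sqrt{\tau}$, which you wave at but which is standard.

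The volume bound is where there is a genuine gap. You correctly identify the obstruction (the volume-form evolution only gives $dV_{g(t)} \geq (s/t)^{nc_0^2} dV_{g(s)}$, which degenerates as $s\downto 0$), but your ``way around it'' is an unspecified ``non-collapsing argument in the spirit of Perelman,'' which is not an argument and is not the tool that does the job here. Perelman-type no-local-collapsing gives a bound of the form $\vol(B)\geq \kappa r^n$ with $\kappa$ tied to entropy or initial-geometry input (and is usually formulated for closed manifolds); it does not produce the quantitative statement that at most a factor $2$ of the given $v_0$ is lost. The mechanism in \cite{SimonCrelle3D} (Lemma 6.1 and Corollary 6.2), as indicated in the paper, is different: one argues by contradiction using Cheeger--Colding volume convergence. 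The two distance inequalities show that $d_t \to d_0$ uniformly (at the scale of unit balls) as $t\downto 0$, with rate controlled by $\ga(n)c_0\sqrt{t}$ and $e^{k_0^2 t}-1$, so $B_{g(t)}(x,1)$ is Gromov--Hausdorff close to $B_{g(0)}(x,1)$; since $\Rc(g(t))\geq -k_0^2$ uniformly in $t$, the volume-continuity theorem of Colding/Cheeger--Colding under lower Ricci bounds forces $\vol(B_{g(t)}(x,1))$ to be close to $\vol(B_{g(0)}(x,1))\geq v_0$, and a contradiction argument then yields the bound $\geq v_0/2$ on the relevant time range. Your preliminary ball-sandwiching $B_{g(0)}(x,e^{-k_0^2 t})\subseteq B_{g(t)}(x,1)$ plus Bishop--Gromov at time $0$ does not help by itself, since it still leaves you with exactly the time-$0$-to-time-$t$ volume comparison you ruled out; the missing ingredient is the Cheeger--Colding volume convergence (or an equivalent), and without naming and using it the central step of the theorem is absent.
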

  {\it Proof ideas}.
Distance estimates from below of the type $$d_t(x,y) \geq d_s(x,y) - \ga(n)\int_{s}^t \sqrt{\sup_{M} |\Rc(g(r))|} dr$$ where shown in Hamilton \cite{HamFor}, and hence, using $\sqrt{\sup_{M} |\Rc(g(r))|} \leq \frac{c_0}{\sqrt r},$ we get the lower bound on the evolving distance. The upper bounds follow from the fact 
$\partt  g(t) = -2\Rc(g(t) \leq 2k_0^2g(t),$ after integrating. 
The volume estimates now follow from a contradiction argument combined with the theorems on volume convergence of Cheeger/Colding
(\cite{Cheeger-Colding}, \cite{Cheeger_notes}). See Lemma 6.1 and Corollary 6.2 in \cite{SimonCrelle3D} for details.
$\Box.$

We remind the reader of one of  the Theorems of Perelman on ancient solutions:
\begin{thm} (\cite{Per1})\label{PerThm}
 Any    smooth, complete, connected, ancient, non-trivial bounded curvature solution  $(M^n,g(t))_{t\in (-\infty,0]}$  to Ricci flow with   non-negative curvature operator 
    must have  zero asymptotic volume ratio    that is 
\begin{eqnarray*}
  AVR(g(t)) := \lim_{r \to \infty} \frac{\vol({B_{g(t)}}(x,r))}{r^n} 
\end{eqnarray*}
exists and satisfies $  AVR(g(t)) =0$ for all $t\in (-\infty,0]$. 
\end{thm}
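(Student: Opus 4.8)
The plan is to argue by contradiction, supposing the asymptotic volume ratio is positive. After a time translation (which preserves every hypothesis) it suffices to rule out $AVR(g(0))=\nu>0$. First I would record two preliminaries. Since $\curlR(g)\geq 0$ and the solution is ancient, Hamilton's trace Harnack inequality makes $\Sc(g(t))$ pointwise non-decreasing in $t$, so $\sup_M \Sc(g(t))\leq \sup_M \Sc(g(0))<\infty$ for all $t\leq 0$; together with $|\Rm(g)|\leq c(n)\Sc(g)$ (which holds because $\curlR(g)\geq 0$) this gives a uniform bound $\sup_{M\times(-\infty,0]}|\Rm(g)|<\infty$, and similar Harnack considerations give that $AVR(g(t))$ does not depend on $t$. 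Second, $\Rc(g(0))\geq 0$ and Bishop--Gromov comparison make $r\mapsto \vol(B_{g(0)}(p,r))/r^n$ non-increasing, so $\vol(B_{g(0)}(p,r))\geq \nu r^n$ for all $p\in M$ and $r>0$; in particular the solution is uniformly non-collapsed at every scale.

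Next I would blow down. Fix $p\in M$; for $\tau>0$ let $\ell_{(p,0)}(\cdot,\tau)$ be Perelman's reduced distance based at $(p,0)$ and $\tilde V(\tau)$ the associated reduced volume. Then $\tilde V$ is non-increasing in $\tau$ and bounded above by $\lim_{\tau\downto 0}\tilde V(\tau)$, and $\tilde V(\tau_0)=\lim_{\tau\downto 0}\tilde V(\tau)$ for some $\tau_0>0$ would force $(M,g(t))$ to be the flat $\R^n$; by non-triviality, $\bar V:=\lim_{\tau\to\infty}\tilde V(\tau)$ is therefore strictly below $\lim_{\tau\downto 0}\tilde V(\tau)$. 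I would then choose $\tau_i\to\infty$ and points $\bar q_i\in M$ with $\ell_{(p,0)}(\bar q_i,\tau_i)\leq \tfrac{n}{2}$, rescale $g_i(t):=\tau_i^{-1}g(\tau_i t)$ pointed at $\bar q_i$, and use Hamilton's compactness theorem --- with the uniform curvature bound near the centres and the non-collapsing --- to extract a pointed smooth Cheeger--Gromov limit $(M_\infty,g_\infty(t))$: a complete ancient solution with non-negative curvature operator whose reduced volume is the constant $\bar V$. By the equality case of Perelman's monotonicity, and since $\bar V<\lim_{\tau\downto 0}\tilde V(\tau)$, the limit $g_\infty$ is a \emph{non-flat} gradient shrinking Ricci soliton.

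Finally I would play two facts against each other. A complete non-flat gradient shrinking Ricci soliton has $AVR=0$: trivially if it is compact, and if non-compact because its potential grows quadratically with $e^{-f}$ integrable, which forces sub-Euclidean volume growth. On the other hand each $g_i$ has $AVR(g_i)=\nu$ (scale invariance of $AVR$ together with its $t$-independence), so $\vol(B_{g_i}(\bar q_i,r))/r^n\geq \nu$ for all $r>0$; passing to the smooth --- hence volume-convergent --- limit gives $AVR(g_\infty)\geq\nu>0$. This contradiction yields $AVR(g(0))=0$, and repeating the argument from an arbitrary $t_0\leq 0$ gives $AVR(g(t))=0$ for all $t$.

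I expect the blow-down step to be the main obstacle: one must upgrade the bare curvature bound into control on parabolic balls of the correct rescaled size around the centres $(\bar q_i,-\tau_i)$, verify that the hypothesis $\nu>0$ supplies exactly the non-collapsing Hamilton's compactness requires at the rescaled unit scale, and push through Perelman's rigidity analysis to identify the limit as a non-flat gradient shrinking soliton --- after which one still needs that this limit is complete rather than merely locally defined.
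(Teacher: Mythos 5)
The paper itself offers no proof of this statement: Theorem \ref{PerThm} is quoted from Perelman \cite{Per1} (Proposition 11.4 there), whose argument is an induction on dimension: assuming $AVR>0$, one studies the asymptotic scalar curvature ratio, performs a point-picking rescaling, splits off a line (Toponogov/strong maximum principle) to reduce to dimension $n-1$, and rules out the cone-like alternative; no asymptotic soliton is used. Your blow-down-to-a-shrinking-soliton route (reduced volume monotonicity, $\ell(\bar q_i,\tau_i)\leq n/2$, rescaling by $\tau_i^{-1}$, rigidity in the monotonicity formula) is therefore genuinely different; it is essentially Perelman's \S 11.2 with the $\kappa$-noncollapsing supplied by $AVR\geq \nu$ rather than assumed. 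The preliminary steps are fixable even where your justification is thin: the needed backward-in-time noncollapsing follows because Hamilton's changing-distance estimate together with $\partial_t dV=-\Sc\, dV\leq 0$ gives $AVR(g(t))\geq AVR(g(0))=\nu$ for all $t\leq 0$, and then Bishop--Gromov gives $\vol(B_{g(t)}(x,r))\geq \nu r^n$ at every scale; the curvature control at scale $\sqrt{\tau_i}$ near $(\bar q_i,-\tau_i)$ comes from $\Sc\leq C\ell/\tau$, which uses the trace Harnack exactly as in \cite{Per1}.

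The genuine gap is the final step. Your claim that \emph{every} complete non-flat gradient shrinking soliton has $AVR=0$, justified by ``$f$ grows quadratically and $e^{-f}$ is integrable, which forces sub-Euclidean volume growth,'' is incorrect: the Gaussian soliton on $\R^n$ has $f=|x|^2/4$, integrable $e^{-f}$, and Euclidean volume growth, so that implication proves nothing; worse, the statement itself is false without curvature hypotheses, since non-flat asymptotically conical shrinkers (e.g.\ the Feldman--Ilmanen--Knopf examples) have $AVR>0$ --- they merely fail $\curlC_{CO}$. So to close your contradiction you must use that the limit soliton inherits non-negative curvature operator (and your noncollapsing), and then prove that a non-flat gradient shrinker in $\curlC_{CO}$ has $AVR=0$. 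That is a real theorem, not a one-line consequence of $\int e^{-f}<\infty$: one route is Hamilton's strong maximum principle to split the universal cover as $\R^k\times N$ with $N$ a shrinker of positive curvature operator, and then the non-existence of non-compact positively curved shrinkers, which is itself a substantial result (and certainly not available by the argument you cite). As written, the contradiction does not close, and this is precisely the point where Perelman's dimension-reduction proof does different work; either import such a soliton theorem explicitly or follow the inductive argument of \cite{Per1}.
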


Combining these three theorems, we see that it is possible in three dimensions to show that uniformly non-collapsed metrics  with Ricci curvature bounded below,    and bounded curvature,  can be flown for a time $T$ which depends only on the non-collapsing constant and the  Ricci curvature bound from below, and uniform estimates will hold 
for the time interval $[0,T)$.
\begin{thm}(\cite{SimonCrelle3D})\label{simon-main}
Let $(M^3,g(0)) $ be a smooth, connected, complete Riemannian manifold with bounded curvature  and
\begin{eqnarray*}
 && \Rc(g(0)) \geq -k \\
 &&\vol(B_{g(0)}(x,1)) \geq v_0>0\\
 && \text{ \rm for all } \ x  \in M
 \end{eqnarray*}
for some $k > 0$. Then there exist  $S(v_0,\max(k,1))>0,$  $c_0 = c_0(v_0) > 0,$
a solution to Ricci flow
$(M,g(t))_{t\in [0,S]}$ such that
\begin{eqnarray*}
&& \Rc(g(t)) > -2c^2_0 k \\
&& \sup_M |\Rm(\cdot,t)|< \frac{c^2_0}{t}\\
 &&    e^{c_0^2k(t-s)}d_s(x,y) \geq  d_t(x,y) \geq d_s(x,y) - \ga c_0\sqrt{s-t} \\
 && \vol(B_{g(t)}(x,1)) > \frac{v_0}{2}>0\\
 && \text{ \rm for all } \ x,y \in M, s\leq t, \ t,s \in  [0,S].
 \end{eqnarray*}
\end{thm}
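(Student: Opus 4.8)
\emph{The plan.} The theorem reduces to a single a priori curvature bound; granted that bound, the four stated conclusions are read off directly from the two curvature--preservation Lemmas above and from Theorem~\ref{distance_and_volume_estimates_ricci_flow}. So the plan is: \emph{(1)} run the flow for a short time by Shi's theorem; \emph{(2)} prove the key estimate that there are $c_1=c_1(v_0)>1$ and $S=S(v_0,\max(k,1))>0$, with $S$ small, such that the bounded--curvature flow exists on $[0,S]$ and satisfies $\sup_M|\Rm(g(t))|\le c_1^2/t$ for all $t\in(0,S]$; \emph{(3)} feed this back into the Lemmas and Theorem~\ref{distance_and_volume_estimates_ricci_flow} and enlarge the constant to $c_0$.

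\emph{Steps (1) and (3).} Since $(M^3,g(0))$ is complete with bounded curvature, Shi's theorem produces a smooth complete bounded--curvature Ricci flow $(M,g(t))_{t\in[0,T_{\max})}$ with $g(0)$ as initial value, with $\sup_M|\Rm(g(t))|\le 2\sup_M|\Rm(g(0))|$ for small $t$. Assuming the estimate of Step~(2): on $(0,S]$ one has $|\Sc(g(t))|\le C(3)c_1^2/t$, so the second curvature--preservation Lemma above (applied with $\ep=k$ and the scalar bound $C(3)c_1^2/t$) together with $S$ small give $\Rc(g(t))\ge -C_1(v_0)\,k\,g(t)$ on $[0,S]$, where $C_1(v_0)$ depends only on $v_0$. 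Now Theorem~\ref{distance_and_volume_estimates_ricci_flow}, applied with the bounds $|\Rm(g(t))|\le c_1^2/t$, $\Rc(g(t))\ge -C_1(v_0)k\,g(t)$, $\vol(B_{g(0)}(x,1))\ge v_0$ (shrinking $S=S(v_0,\max(k,1))$ if its volume conclusion needs $S$ small), yields the distance inequalities with constants $C_1(v_0)k$ and $c_1$, together with $\vol(B_{g(t)}(x,1))\ge v_0/2$, for all $s\le t$ in $[0,S]$. Setting $c_0=c_0(v_0):=\max\{c_1,\sqrt{C_1(v_0)}\}+1$ turns these into $\sup_M|\Rm(g(t))|<c_0^2/t$, $\Rc(g(t))>-2c_0^2k\,g(t)$, $e^{c_0^2k(t-s)}d_s(x,y)\ge d_t(x,y)\ge d_s(x,y)-\ga c_0\sqrt{s-t}$, $\vol(B_{g(t)}(x,1))>v_0/2$ on $[0,S]$ --- the statement of Theorem~\ref{simon-main}.

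\emph{Step (2): the a priori bound by contradiction and blow--up.} Fix $c_1=c_1(v_0)$, large, to be pinned down at the end. If the estimate fails for some $k$, there is a sequence $(M_j^3,g_j(0))$ obeying the hypotheses with this $v_0$ and $k$ whose flows first violate $t|\Rm|\le c_1^2$ at times $T_j^*\to 0$ (so $T_j^*<T_{\max,j}$, $t|\Rm(g_j(t))|\le c_1^2$ on $(0,T_j^*]$, $\sup_M|\Rm(g_j(T_j^*))|=c_1^2/T_j^*$). Running Step~(3) on $[0,T_j^*]$ (valid because $T_j^*\to0$) gives $\Rc(g_j(t))\ge -C_1(v_0)k\,g_j(t)$ and hence $\vol(B_{g_j(t)}(x,1))\ge v_0/2$ on $[0,T_j^*]$. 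Parabolically rescale by $\la_j\approx c_1^2/T_j^*\to\infty$ at a near--maximum point $p_j$ of $|\Rm(g_j(T_j^*))|$ and shift time so the maximum is at time $0$: the rescaled flows $\ti g_j$ live on $(-c_1^2,0]$, satisfy $|\Rm(\ti g_j(s))|\le c_1^2/(c_1^2+s)$ with $|\Rm(\ti g_j(p_j,0))|=1$, have $\Rc(\ti g_j(0))\ge -C_1(v_0)k/\la_j\to0$, and are non--collapsed with volume ratio $\ge v_0/2$ at the diverging scale $\sqrt{\la_j}$. Non--collapsing plus the local curvature bound give a uniform injectivity--radius lower bound at $(p_j,0)$, so Hamilton's compactness theorem gives a $C^\infty_{loc}$ subsequential limit: a complete pointed Ricci flow $(M_\infty,g_\infty(s),p_\infty)_{s\in(-c_1^2,0]}$ with $|\Rm(g_\infty(p_\infty,0))|=1$, curvature bounded on compact subintervals, asymptotic volume ratio $\ge v_0/2>0$ at every time, and --- since the curvature has blown up while the Hamilton--Ivey pinching of the $g_j(0)$ is normalised by the fixed bound $\Rc(g_j(0))\ge-k$ --- nonnegative curvature operator.

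\emph{Reaching the contradiction; the main obstacle.} Theorem~\ref{PerThm} forbids a \emph{genuinely ancient} non--collapsed Ricci flow with nonnegative curvature operator, but the limit above lives only on the finite interval $(-c_1^2,0]$ --- the bound $t|\Rm|\le c_1^2$ caps the backward existence time of the naive blow--up, and this is the real difficulty of the whole proof. The remedy is an auxiliary estimate, to be proved first: for each $v_0$ there is $L(v_0)$ so that any complete $3$--dimensional Ricci flow with nonnegative curvature operator on an interval $(-L',0]$, $L'>L(v_0)$, with volume ratio $\ge v_0/2$ everywhere and curvature bounded on compact subintervals, has $|\Rm|\le\tfrac12$ at time $0$. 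This is itself proved by contradiction: flows on $(-L_j',0]$, $L_j'\to\infty$, with $|\Rm|>\tfrac12$ somewhere at time $0$ admit, after a Perelman/Hamilton--style point--selection (which works exactly because curvature is bounded on each compact subinterval), a rescaled subsequential limit that \emph{is} ancient, non--collapsed and of nonnegative curvature operator, with positive asymptotic volume ratio --- contradicting Theorem~\ref{PerThm}. Now choose $c_1=c_1(v_0)$ from the start with $c_1^2>L(v_0)$; the auxiliary estimate applied to $g_\infty$ on $(-c_1^2,0]$ forces $|\Rm(g_\infty(p_\infty,0))|\le\tfrac12<1$, a contradiction, which proves the a priori bound and hence Theorem~\ref{simon-main}. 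The remaining points --- the dependence of $S$ on $\max(k,1)$, the range of $\ep=k$ in the preservation Lemma, and the harmless enlargement of $c_1$ to $c_0$ --- are routine bookkeeping.
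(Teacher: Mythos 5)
Your overall architecture (bootstrap the a priori bound $t|\Rm|\le c_1^2$, feed it into the preservation lemmas and Theorem \ref{distance_and_volume_estimates_ricci_flow}, prove the bound by contradiction and blow--up against Perelman's Theorem \ref{PerThm}) is the same as the paper's, and you correctly identify the main obstacle: blowing up at a \emph{fixed} constant $c_1$ only yields a flow on the finite interval $(-c_1^2,0]$, not an ancient one. But the way you bridge this has a genuine flaw. You assert that the finite--interval limit has nonnegative curvature operator ``since the Hamilton--Ivey pinching of the $g_j(0)$ is normalised by the fixed bound $\Rc(g_j(0))\ge -k$''. Hamilton--Ivey is normalised by a lower bound on the smallest eigenvalue $\nu$ of the curvature operator (equivalently, on sectional curvature in dimension three) at the starting time, not by a Ricci lower bound: $\Rc\ge -k$ gives no lower bound on $\nu$ (take $K_{12}=-A$, $K_{13}=K_{23}=A-k$ with $A$ arbitrarily large). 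Each $g_j(0)$ has bounded curvature, but not uniformly in $j$, so no uniform normalisation at time $0$ exists. The best available substitute is to apply Hamilton--Ivey from the rescaled time $\approx -c_1^2/2$, where $|\Rm(\ti g_j)|\le 2$; since the elapsed rescaled time is then only $c_1^2/2$ with $c_1$ fixed, this gives $\nu\ge -\eta(c_1)$ at time $0$ with $\eta(c_1)>0$, i.e.\ \emph{almost} nonnegative curvature, not nonnegative. Your auxiliary lemma assumes exact nonnegativity of the curvature operator, so the chain does not close as written. (The limit does inherit $\Ric\ge 0$ from the rescaled bound $-C_1(v_0)k/\la_j$, but in dimension three that implies neither nonnegative curvature operator nor the hypotheses of Theorem \ref{PerThm}.)

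The repair, and what the paper following \cite{SimonCrelle3D} effectively does, is to let the constant tend to infinity along the contradiction sequence: if no constant works, one obtains rescaled flows with $|\Rm|\le 2$ on intervals $[-c_i^2/2,0]$, $c_i\to\infty$, whose Cheeger--Hamilton limit is genuinely ancient with bounded curvature; Hamilton--Ivey applied from starting times tending to $-\infty$, normalised by the uniform curvature bound on those intervals rather than by the initial data, yields nonnegative sectional (hence curvature operator) in the limit, positive asymptotic volume ratio comes from Bishop--Gromov as you describe, and Theorem \ref{PerThm} applies directly --- no auxiliary lemma is needed. Alternatively you can keep your two--step structure, but then the auxiliary ``long backward interval implies small curvature'' lemma must be formulated with an almost--nonnegativity hypothesis $\nu\ge-\eta_0(v_0)$, and $c_1$ chosen so that both $c_1^2>2L(v_0)$ and $\eta(c_1)<\eta_0(v_0)$. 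The remaining parts of your outline (bootstrapping the Ricci lower bound and the volume/distance estimates, the non--collapsing at diverging scales, the reduction to small $k$ by scaling) are consistent with the paper's sketch.
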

\noindent {\it Proof idea.} 
We assume for the moment that $k=k(v_0)$ is sufficiently small. 
We define $c^2_0:= \frac{1}{\sqrt{k}},$ and note that then $2c^2_0k = 2\sqrt{k}.$ 
Using the previous theorems, we see that the volume and distance estimates and the  Ricci curvature lower bound  for $t\in (0,S)$  will be satisfied, for $S=S(c_0(v_0))$ small enough, as long as   
$\sup_M |\Rm(\cdot,t)|< \frac{c^2_0}{t}$ holds. 
Hence, in trying to show that   $\sup_M |\Rm(\cdot,t)|< \frac{c^2_0}{t}$ remains true we may assume that the volume and distance estimates are satisfied and that $\sup_M |\Rm(\cdot,t)|< \frac{c^2_0}{t}$ fails at a first time $t_0\leq S(v_0).$  
We note that \begin{eqnarray} \vol(B_{g(t)}(x,r)) \geq \frac{1}{4}v_0 r^3 \label{volley} \end{eqnarray}  will  hold for all $t \leq t_0\leq S(v_0)$ for any $x \in M$ for all $r\leq 1$ due to
the Bishop-Gromov-Volume-Comparison principle.  
A blow up argument, that is rescaling and translating the solution by $\ti g(\ti t):= C g(\frac {\ti t}{ C} + t_0),$ $C:= \frac{c^2_0}{t_0}$ will lead to a solution,  which contradicts the conclusions  of Perelman's Theorem, Theorem \ref{PerThm},  if $c_0 = c_0(v_0)>0$ is chosen  large enough (that is $k(v_0)>0$ is small enough), in view of the inequality \eqref{volley}.  The general case follows by some scaling arguments combined with the Bishop-Gromov volume comparison principle and applications of Theorem \ref{distance_and_volume_estimates_ricci_flow} 
:   See   \cite{SimonCrelle3D} for details.
$\Box$ 
 
A theorem of Gromov, says the following.
\begin{thm} (Gromov , Theorem 5.3 \cite{Grom})\label{GromThm}
Let $(M_i,g_i)$ be a sequence of smooth, complete, connected  Riemannian manifolds 
with $\Rc(g_i) \geq -k$ for some $k>0$. Then, after taking a subsequence,  there exists a pointed 
Gromov-Hausdorff limit  $(X,d_X,x) = \lim_{i\to \infty} (M,d_i=d(g_i),p_i)$ for any $p_i \in M_i.$ 
\end{thm}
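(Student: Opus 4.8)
\noindent{\it Proof proposal.} The plan is to reduce the statement to Gromov's abstract precompactness criterion for pointed metric spaces, whose only geometric input is a uniform total boundedness of metric balls; that input will be supplied by the Bishop--Gromov volume comparison theorem applied to the lower Ricci bound $\Ric(g_i)\geq -k$.

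First I would record the packing consequence of Bishop--Gromov. Fix $R>0$ and $\vare>0$, and let $\{q_1,\dots,q_N\}\subseteq B_{d_i}(p_i,R)$ be a maximal $\vare$-separated subset. Then the balls $B_{d_i}(q_a,\vare/2)$ are pairwise disjoint and contained in $B_{d_i}(p_i,R+\vare)$, whereas conversely $B_{d_i}(p_i,R+\vare)\subseteq B_{d_i}(q_a,2R+\vare)$ for each $a$. Since $\Ric(g_i)\geq -k$, the monotonicity of $r\mapsto \vol(B_{d_i}(q,r))/v_k(r)$ --- where $v_k(r)$ is the volume of a ball of radius $r$ in the simply connected $n$-dimensional space form of constant sectional curvature $-k/(n-1)$ --- gives, for each $a$,
$$
\vol\big(B_{d_i}(q_a,\vare/2)\big)\ \geq\ \frac{1}{N_0}\,\vol\big(B_{d_i}(p_i,R+\vare)\big),\qquad N_0:=\frac{v_k(2R+\vare)}{v_k(\vare/2)}\,.
$$
Summing over the $N$ disjoint balls sitting inside $B_{d_i}(p_i,R+\vare)$ yields $N\leq N_0=N_0(n,k,R,\vare)$, independently of $i$. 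Since a maximal $\vare$-separated subset is automatically an $\vare$-net, $B_{d_i}(p_i,R)$ is covered by at most $N_0(n,k,R,\vare)$ balls of radius $\vare$, uniformly in $i$; that is, the family $(M_i,d_i,p_i)$ is uniformly totally bounded.

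Next I would run the standard diagonal extraction. For each pair of integers $(j,m)$ choose a maximal $\tfrac1m$-separated subset $S^{(i)}_{j,m}\subseteq B_{d_i}(p_i,j)$ containing $p_i$; by the previous step $\#S^{(i)}_{j,m}\leq N_0(n,k,j,\tfrac1m)$. After passing to a subsequence this cardinality is a constant $N_{j,m}$, so write $S^{(i)}_{j,m}=\{q^{(i)}_{j,m,1}=p_i,\,q^{(i)}_{j,m,2},\dots,q^{(i)}_{j,m,N_{j,m}}\}$, and --- all mutual distances lying in $[0,2j]$ --- pass to a further subsequence so that $d_i\big(q^{(i)}_{j,m,a},q^{(i)}_{j,m,b}\big)\to \delta_{j,m,a,b}$ for all $a,b$. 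Doing this for the countably many $(j,m)$ and diagonalising produces one subsequence along which all these limits exist; one can also arrange the nets to be nested in $j$ and $m$ so that the limiting finite metric spaces are mutually compatible. Let $Z$ be the resulting countable set equipped with the limiting pseudometric $\delta$; after identifying points at $\delta$-distance $0$ and taking the metric completion $(X,d_X)$, with base point $x$ the common image of the $p_i$, I claim $(M_i,d_i,p_i)\to(X,d_X,x)$ in the pointed Gromov--Hausdorff sense of Definition \ref{def-fair-controlled}(iii). Indeed, choose $j(i),m(i)\to\infty$ slowly enough that along the chosen subsequence the net distances at scale $(j(i),m(i))$ are already within $\tfrac1{m(i)}$ of their limits, and define $F_i$ on $B_{d_i}(p_i,j(i))$ by sending a point to a nearest net point of $S^{(i)}_{j(i),m(i)}$ and then to the corresponding point of $X$; the net property makes $F_i$ an $\vare(i)$-almost isometry with $\vare(i)\to 0$ and with image $\tfrac1{m(i)}$-dense in the corresponding ball of $X$, and after slightly enlarging the constants this is exactly the form demanded in Definition \ref{def-fair-controlled}(iii), with $F_i(p_i)=x$. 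Completeness of $X$ is automatic, being a completion.

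The main effort lies entirely in this last step: picking a single subsequence that works uniformly over the countably many scales $(j,m)$, arranging the finite limits to be compatible so that $(X,d_X)$ is well defined, and calibrating $j(i)\to\infty$ and $\tfrac1{m(i)}\to 0$ against the subsequence index so that simultaneously $\vare(i)\to 0$, $R(i)=j(i)\to\infty$, and the near-surjectivity $B_{d_X}(x,R(i)-\vare(i))\subseteq F_i(B_{d_i}(p_i,R(i)))$ all hold. The only geometric content --- that $\Ric(g_i)\geq -k$ forces the uniform packing bound $N_0(n,k,R,\vare)$ --- is the Bishop--Gromov estimate above; everything following it is soft metric-space analysis that never uses that the $(M_i,g_i)$ are Riemannian.
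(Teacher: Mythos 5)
The paper offers no proof of this statement at all: it is quoted as Theorem 5.3 of Gromov's book \cite{Grom}, so there is no in-paper argument to compare against. Your proposal is the standard proof of Gromov's precompactness theorem and is essentially sound: the Bishop--Gromov comparison under $\Ric(g_i)\geq -k$ yields the uniform packing bound $N_0(n,k,R,\vare)$, hence uniform total boundedness of the balls $B_{d_i}(p_i,R)$, and the diagonal extraction over nets at the countably many scales $(j,\tfrac1m)$, followed by identification of zero-distance points and completion, produces the pointed limit; this is exactly the route Gromov's cited theorem rests on. Two small caveats. First, your constant $N_0(n,k,R,\vare)$ tacitly assumes all $M_i$ have a common dimension $n$ (or at least uniformly bounded dimension); this hypothesis is implicit in the paper's statement and explicit in Gromov's, and it is genuinely needed, since for example the round spheres $S^n$ with $n\to\infty$ satisfy $\Ric\geq 0$ but have no Gromov--Hausdorff convergent subsequence. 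Second, your final map $F_i$ has finite image, so it cannot literally satisfy the containment clause $B_{d_X}(x,R(i)-\ep(i))\subseteq F_i(B_{d_i}(p_i,R(i)))$ of Definition \ref{def-fair-controlled}(iii), and ``slightly enlarging the constants'' does not repair that; what your construction delivers is an $\ep(i)$-isometry whose image is $\ep(i)$-dense in the relevant ball. This is the usual formulation of pointed Gromov--Hausdorff convergence and defines the same notion of convergence (and one can upgrade to literal containment by a further selection argument using that balls in the $M_i$ and in the limit have no isolated points), so it is a matter of reformulation rather than a substantive gap; the geometric content of your argument is correct.
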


Hence the class of initial Riemannian manifolds  appearing in Theorem
\ref{simon-main} has a sub-sequence which converges to a metric space $(X,d_X).$ Theorem \ref{simon-main} guarantees that there is a solution to Ricci flow coming out of $(X,d_X)$.

\begin{thm}\label{limit-theorem}
Let $(M^3_i,g_i(0),p_i),$ $i\in \N,$ be 
a sequence of smooth, connected, complete Riemannian three manifolds $(M^3_i,g_i(0),p_i)$ with bounded curvature and
$\Rc(g_i(0)) \geq -k_i$ and $vol(B_{g_i(0)}(x,1)) \geq v_0>0$ for all $i\in \N,$ where $k_i \in (0,\infty)$ and $k_i \to k\in [0,\infty).$ Then a subsequence of  $(M^3_i,g_i(0),p_i),$ converges to a possibly non-smooth metric space
$(X,d_X,x),$ and 
there exists a smooth, three dimensional, complete, connected   solution to Ricci flow, $(M^3,g(t))_{t\in (0,S)},$ and a metric $d_0$ defined on $M,$
such that $d_t \to d_0$   uniformly as $t\downto 0,$ $(M,d_t)$ has the same topology as $(M,d_0),$  and  $(M,d_0)$ is isometric to $(X,d_X)$.
In particular, $X$ with the topology given by $d_X$ admits a   $C^0$ manifold structure, and hence, due to a theorem of Moise,  admits a smooth   manifold structure.
The solution $(M,g(t))_{t\in (0,T)}$  satisfies the estimates in the conclusions of Theorem  \ref{simon-main}.
\end{thm}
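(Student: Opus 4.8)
\emph{Proof plan.} The plan is to run Theorem \ref{simon-main} along the sequence, extract a smooth limit Ricci flow by a compactness theorem of Hamilton, and then match the $t\downto0$ limit of that flow with the Gromov--Hausdorff limit of the initial data. Since $k_i\to k$ we have $K:=\sup_i k_i<\infty$, so applying Theorem \ref{simon-main} to each $(M^3_i,g_i(0))$ and using the monotonicity of $S$ in its second argument produces, for a single $c_0=c_0(v_0)$ and $S_0:=S(v_0,\max(K,1))>0$, smooth complete Ricci flows $(M_i,g_i(t))_{t\in[0,S_0]}$ obeying the conclusions of Theorem \ref{simon-main} with constants independent of $i$; in particular the distance estimate $e^{c_0^2 k_i(t-s)}d^i_s(x,y)\ge d^i_t(x,y)\ge d^i_s(x,y)-\ga c_0\sqrt{t-s}$ for $0\le s\le t\le S_0$, the curvature bound $\sup_{M_i}|\Rm(g_i(t))|<c_0^2/t$, the Ricci bound $\Ric(g_i(t))>-2c_0^2 k_i$ and the non-collapsing $\vol(B_{g_i(t)}(x,1))>v_0/2$ all hold uniformly in $i$, where $d^i_t:=d(g_i(t))$. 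In parallel, Gromov's precompactness theorem (Theorem \ref{GromThm}) applied to $\Ric(g_i(0))\ge-K$ gives, after a subsequence, a pointed Gromov--Hausdorff limit $(M_i,d^i_0,p_i)\to(X,d_X,x)$.

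To extract a smooth limit flow I would argue as follows. At the fixed intermediate time $t_*:=S_0/2$, the bound $|\Rm(g_i(t_*))|\le c_0^2/t_*$ together with $\vol(B_{g_i(t_*)}(p_i,1))\ge v_0/2$ yields, by the Cheeger--Gromov--Taylor injectivity radius estimate, a uniform lower bound $inj(M_i,g_i(t_*))(p_i)\ge\iota_0(v_0,c_0,S_0)>0$. Combined with the curvature bounds $\sup_{M_i\times[\de,S_0]}|\Rm(g_i(t))|\le c_0^2/\de$ on every $[\de,S_0]\subseteq(0,S_0]$, Hamilton's compactness theorem for Ricci flows applies on each such subinterval; diagonalising as $\de\downto0$ and passing to a further subsequence produces a pointed $C^\infty_{loc}$ Cheeger--Gromov limit $(M_i,g_i(t),p_i)\to(M,g(t),p)$, where $(M^3,g(t))_{t\in(0,S_0)}$ is a smooth, connected, complete solution to Ricci flow. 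Since volumes and distances of fixed-radius balls and pointwise curvature bounds all pass to $C^\infty_{loc}$ limits, $(M,g(t))_{t\in(0,S_0)}$ satisfies the conclusions of Theorem \ref{simon-main}, now with $k$.

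It then remains to build $d_0$ and identify it with $(X,d_X)$. The distance estimate for $(M,g(t))$ shows that as $t\downto0$ the family $d_t:=d(g(t))$ is uniformly Cauchy on compact subsets of $M\times M$ — the multiplicative factor $e^{c_0^2k(t-s)}\to1$ and the additive error $\ga c_0\sqrt{t-s}\to0$ — so $d_0:=\lim_{t\downto0}d_t$ exists, $d_t\to d_0$ locally uniformly, and the two-sided comparison $d_0-\ga c_0\sqrt t\le d_t\le e^{c_0^2kt}d_0$ together with completeness of $g(t)$ shows $d_0$ is a genuine complete metric inducing the manifold topology of $M$; in particular $(M,d_0)$ is homeomorphic to $(M,d_t)$ for each $t\in(0,S_0)$. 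Finally, the $s=0$ instance of the $i$-uniform distance estimate shows $d^i_t\to d^i_0$ locally uniformly at a rate independent of $i$; plugging this into a four-term triangle inequality for the pointed Gromov--Hausdorff distance — chaining $(X,d_X)$, $(M_i,d^i_0)$, $(M_i,d^i_t)$, $(M,d_t)$, $(M,d_0)$ and choosing $i$ large then $t$ small — gives $d_{GH}\big((X,d_X),(M,d_0)\big)=0$, whence, both being complete, locally compact length spaces (hence proper), $(X,d_X)$ is isometric to $(M,d_0)$. Transporting the smooth structure of $M$ through this isometry makes $X$ a $C^0$ three-manifold with $d_X$ inducing its topology, and Moise's theorem then upgrades this to a smooth structure, as stated.

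The step I expect to be the main obstacle is this last identification: showing that the $t\downto0$ limit of the constructed flow is \emph{exactly} the Gromov--Hausdorff limit $X$ of the (possibly curvature-unbounded) initial data. This rests on the distance estimate of Theorem \ref{simon-main} being uniform in the approximating index $i$ all the way down to $s=0$, so that the error incurred in letting $i\to\infty$ and the error incurred in letting $t\downto0$ may be interchanged; the other technical point is verifying the hypotheses of Hamilton's compactness theorem, in particular deducing the $i$-uniform injectivity radius lower bound from the non-collapsing and curvature bounds.
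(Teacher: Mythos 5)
Your proposal is correct and follows essentially the same route as the paper's own (sketched) proof: apply Theorem \ref{simon-main} to each $(M_i,g_i(0))$ with constants uniform in $i$, take a Cheeger--Hamilton limit for $t>0$, obtain $d_0$ from the uniform distance estimates, and identify $(M,d_0)$ with the Gromov--Hausdorff limit $(X,d_X)$. The extra details you supply (Cheeger--Gromov--Taylor injectivity bound at an intermediate time, the diagonal argument in $\de$, and the four-term Gromov--Hausdorff chaining) are exactly the standard way to fill in the steps the paper leaves implicit.
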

Note: If $k_i<0,$ $k_i \to 0,$ then the conclusion for the curvature is
$\Rc(g(t)) \geq 0.$\\
{\it Proof sketch}.
The existence of $(X,d_X,x)$ follows from Gromov's Theorem, Theorem \ref{GromThm}. 
Let
$(M_i,g_i(t),p_i)_{t\in [0,S)}$ be the solutions coming from Theorem \ref{simon-main} with inital value $(M_i,g_i(0))$.  We can, after taking a subsequence,  take a smooth Cheeger-Hamilton Limit (see \cite{Ham-Com} )
for $t>0$ to obtain a smooth limiting solution
$(M,g(t) )_{t\in (0,S)},$ satisfying the estimates in the conclusion of Theorem \ref{simon-main}.
The distance estimates,   guarantee the existence of $d_0$,  and  that the topology of $(M,d_0)$ is the same as that of $(M,d_t)$ for all $t\in (0,S),$ and that $(M,d_0)$ is isometric to $(X,d_X)$.
$\Box.$

Examining the ingredients of Theorem \ref{simon-main},  we see that in order to apply a similar   procedure to other curvature conditions in other dimensions, one will need the following:\\
Ingredient 1) A theorem similar to the one of Perelman, Theorem \ref{PerThm}, for other curvature conditions.\\
Ingredient  2) Estimates for curvature bounds from below in the setting that $|\Rm(\cdot,t)|\leq \frac{c^2_0}{t}$.\\
Ingredient 3) The curvature bound from below being considered should imply a Ricci curvature bound from below, in order to make it possible to  use Theorem \ref{distance_and_volume_estimates_ricci_flow}. .\\ 
{ {\bf Notes }: \\
i)   $\Ricci \geq -k$ is  also used to get a limit at time zero via Gromov Hausdorff convergence.\\ 
ii) In the case that one assumes for example  non-negative curvature operator (that is a bound from below by zero), then it is possible to  extend these
 results and methods to higher dimensions, as was done in  Schulze/Simon \cite{Sch-Sim} : see also the paper of Deruelle  \cite{Der-Smo-Pos-Cur-Con}.  As we pointed out in the introduction, the curvature operator is known to remain non-negative (\cite{ham4d}). Also the Theorem of  Perelman, Theorem \ref{PerThm},  holds in this setting, so no further result is necessary in this case.\\
iii) In the case that the  one assumes  PIC2 then it is possible to obtain a similar result to this in all dimensions, as was shown by  \cite{CrWi}. In fact they do not need to assume that the initial manifold has bounded curvature. Using a Cheeger-Gromoll exhaustion function and a doubling procedure, they obtain a solution and results also in the setting of  curvature being initially unbounded.\\  
iv)   
In the case that we have merely a negative-bound from below at time zero, for example $\curlR(g(0)) +kI\geq 0$  for a $k>0$, 
attempts to generalise the above "Ingredient  2"  to higher dimensions by adding multiples of $t \Sc(g(t))I   $ and $I$    to $\curlR(g(t))$ for example, have, to this date,  not been successful,  except  in the case that
one already knows $|\Rm(g(t))|\leq \frac{c^2_0}{t}$ and  $c^2_0 \leq \ep(n),$ with $\ep(n)$ sufficiently small, as explained in   Section \ref{c0flowsection}.
A break through for the case that the bound from below is negative in higher dimensions was achieved by Bamler/Cabezas/Rivas \cite{Bam-Cab-Riv-Wil}. 
There they examine the quantity  $\ell(t)= \inf\{ \al \in [0,\infty)\  | \ \curlR(g(t)) +\al I \in \curlC \} $ where $\curlC$ represents the curvature condition (cone)  being considered. 
Due to some scaling arguments (as in the proof of Theorem \ref{simon-main} above), it suffices to consider the case that one has at time zero
$\ell(0) \leq \ep(n,v_0),$ where $\ep(n,v_0)>0$ is as small as we like.  They  show, that
the reaction equation  of  $\ell(t)$ satisfies an inequality of the type 
\begin{eqnarray} \label{estimate_ell}
\partt \ell(t) \leq  \Sc(g(t))\ell(t) + C_2\ell^2.
\end{eqnarray}
They are then able to prove upper bounds on the Gaussian corresponding  to the equation 
\begin{eqnarray} \label{modlap}
\partt u = \lap_g u + \Sc(g(t))u,  
\end{eqnarray}
in the setting that $|\Rm(g(t))|\leq \frac{c^2_0}{t}$ 
 which then allows them to obtain a bound on $\ell(t)$ by comparing $\ell$ to the solution $u$  to  \eqref{modlap}  whose initial value is constant,  $u(0):= \ep$.

The proof  of the estimate \eqref{estimate_ell} in \cite{Bam-Cab-Riv-Wil} is   very precise, and takes up a large part of the  paper.  There is no room for estimating by large constants, and   an estimate of the 
form $\partt \ell(t) \leq  C\Sc(g(t))\ell(t) + C_2\ell^2$ with $C>1$ in place of \eqref{estimate_ell} is {\bf not} sufficient for the arguments  used in the paper, that follow estimate \eqref{estimate_ell}. 
  
All the curvature conditions considered in the following theorem have the property that a bound on $\ell$ implies a bound from below on $\Ricci$. This means that Theorem \ref{distance_and_volume_estimates_ricci_flow} can be applied  and  in particular the volume estimates hold.
Furthermore, versions of Perelman's theorems  are proved for curvature conditions other than the curvature operator, which are
combined with  the volume bound from below, as in the proof of Theorem \ref{simon-main} explained above, to obtain that the curvature is bounded by $c^2_0/t$.   
\begin{thm}(Bamler-Cabezas-Rivas-Wilking:  \cite{Bam-Cab-Riv-Wil})\label{bamler-cabezas_rivas-wilking}
For $n\in\N$ and $v_0,k>0$ there exists a $c_0(n,v_0) $ and $S(n,\max(k,1), v_0)>0$ such that the following holds.  
Let $(X,d_X,x)$ be the pointed Gromov-Hausdorff limit of smooth, complete, connected  Riemannian manifolds $(M^n_i,g_i(0),p_i)$ with bounded curvature and assume that
\begin{eqnarray*}
&&\vol_{g_i(0)}(B_{g_i(0)}(x,1)) \geq v_0  \text{ for all  }  x\in M_i 
\end{eqnarray*}
 and assume further that  for some sequence $(k_i)_{i\in \N} \subseteq \R^+$ with $k_i \to k \in [0,\infty)$ as $i\to \infty$ we have
  $\curlR(g_i(0)) + k_i I \in \curlC,$   where $  \curlC$  is the curvature condition $\curlC_{CO}, \curlC_{2CO}, \curlC_{IC1}, 
  \curlC_{IC2}$ or the Riemannian manifolds are  K\"ahler with complex structure $J_i$ and $  \curlC = \curlC_{HB}.$
Then there is a Ricci flow $(M,g(t))_{t\in (0,S]}$ coming out of $(X,d_X)$ (a K\"ahler Ricci flow   $(M,g(t))_{t\in (0,S]}$  with complex structure $J$ in case  $\curlC = \curlC_{HB} $)    
such that 
\begin{eqnarray*}
&&|\Rm|(\cdot,t) \leq \frac{c^2_0}{t}\\
&&   \curlR(g(t)) + k c^2_0 I \in \curlC,\\
 &&    e^{-c^2_0 k \ga(n)(t-s)}d_s(x,y) \geq  d_t(x,y) \geq d_s(x,y) - \ga(n)c_0\sqrt{s-t} \\
 && \vol(B_{g(t)}(x,1)) > \frac{v_0}{2}>0\\
 && \text{ \rm for all } \ x,y \in M, s\leq t, \ t,s \in (0,S].
\end{eqnarray*}
\end{thm}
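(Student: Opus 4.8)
\noindent{\it Proof sketch (proposal).} The plan is to adapt the three-step scheme behind Theorem~\ref{simon-main} to all dimensions, using the two higher-dimensional ingredients supplied by \cite{Bam-Cab-Riv-Wil}. First I would normalise the situation: under the parabolic rescaling $g\mapsto\la g$, $t\mapsto\la t$ the quantity $\ell(t):=\inf\{\al\geq 0:\curlR(g(t))+\al I\in\curlC\}$ is multiplied by $\la^{-1}$ while the non-collapsing scale is multiplied by $\sqrt\la$, so, arguing as in the proof of Theorem~\ref{simon-main} with the Bishop--Gromov comparison and Theorem~\ref{distance_and_volume_estimates_ricci_flow}, it suffices to treat the case $\ell_i(0)\leq\ep(n,v_0)$ with $\ep(n,v_0)>0$ as small as we wish, and to produce on each $M_i$ a bounded-curvature Ricci flow on a fixed interval $[0,S]$ with $\sup_{M_i}|\Rm(g_i(t))|\leq c_0^2/t$; the remaining estimates then follow exactly as in the three-dimensional case. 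Short-time existence of $(M_i,g_i(t))$, and its being a bounded-curvature solution, come from Shi's theorem.

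The heart of the matter is then a contradiction argument run at the first time $t_0\leq S$ at which $\sup_{M_i}|\Rm(g_i(t))|<c_0^2/t$ would fail. On $[0,t_0]$ one controls $\ell_i$ as follows: Hamilton's maximum principle for curvature operators, together with the analysis of $\curlC$ as a closed, $O(n)$-invariant convex cone invariant under the Hamilton ODE, yields the reaction inequality \eqref{estimate_ell}, $\partt\ell_i\leq\Sc(g_i(t))\ell_i+C_2\ell_i^2$; comparing $\ell_i$ with the solution $u$ of \eqref{modlap} with constant initial value $u(0)=\ep$ and invoking the Gaussian upper bound for \eqref{modlap} that \cite{Bam-Cab-Riv-Wil} prove under the hypothesis $|\Rm|\leq c_0^2/t$, one gets $\ell_i(t)\leq\ga(n)k_ic_0^2$ on $[0,t_0]$. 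Since every cone in the list forces a Ricci lower bound, this gives $\Ric(g_i(t))\geq-\ga(n)k_ic_0^2$, so Theorem~\ref{distance_and_volume_estimates_ricci_flow} applies and yields the distance estimates and $\vol(B_{g_i(t)}(x,1))\geq v_0/2$, which Bishop--Gromov upgrades to $\vol(B_{g_i(t)}(x,r))\geq\tfrac14 v_0 r^n$ for all $r\leq 1$, $t\leq t_0$. A standard point-picking and blow-up argument (rescale and translate by $\ti g(\ti t)=C g_i(\tfrac{\ti t}{C}+t_0)$, $C=c_0^2/t_0$, pass to a pointed Cheeger--Hamilton limit, and, if needed, let $c_0\to\infty$) then produces a complete, non-flat, bounded-curvature ancient solution which lies in $\curlC$ (as $\ell$ scales to $0$ and each $\curlC$ is preserved by the flow), is $\kappa$-non-collapsed, and has positive asymptotic volume ratio (the volume bound $\vol(B(x,r))\geq\tfrac14 v_0 r^n$ now holding for arbitrarily large $r$). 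The version of Perelman's Theorem~\ref{PerThm} appropriate to $\curlC$, also established in \cite{Bam-Cab-Riv-Wil}, forces the asymptotic volume ratio to vanish once $c_0=c_0(n,v_0)$ is large enough --- a contradiction. Hence $\sup_{M_i}|\Rm(g_i(t))|\leq c_0^2/t$ on $[0,S]$, uniformly in $i$.

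Finally, with the uniform bound $|\Rm(g_i(t))|\leq c_0^2/t$, the Ricci lower bound and the non-collapsing in hand, I would take a Cheeger--Hamilton limit of $(M_i,g_i(t),p_i)$ for $t>0$ to obtain the limiting Ricci flow $(M,g(t))_{t\in(0,S]}$ --- K\"ahler with limiting complex structure $J$ in the case $\curlC=\curlC_{HB}$, since K\"ahlerness and the complex structures pass to the limit --- which inherits all the stated estimates; $\curlR(g(t))+kc_0^2 I\in\curlC$ survives the limit because $\curlC$ is closed, and the distance estimates produce a metric $d_0$ on $M$ with $d_t\to d_0$ locally uniformly and $(M,d_0)$ isometric to $(X,d_X)$, just as in Theorem~\ref{limit-theorem}. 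The main obstacle is establishing the reaction inequality \eqref{estimate_ell} with the coefficient \emph{exactly} $1$ in front of $\Sc\,\ell$: as the text stresses, a version with $C>1$ there would be useless for the subsequent heat-kernel comparison, so one needs a delicate, condition-by-condition study of the cones $\curlC_{CO},\curlC_{2CO},\curlC_{IC1},\curlC_{IC2},\curlC_{HB}$ and of the behaviour of $\ell$ under the Hamilton ODE; proving the matching Perelman-type vanishing asymptotic volume ratio theorems for these cones is the second substantial piece of work.
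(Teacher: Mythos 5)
Your sketch follows essentially the same route the paper outlines for this theorem: reduce by scaling to small initial $\ell$, control $\ell(t)=\inf\{\al\geq 0:\curlR+\al I\in\curlC\}$ via the reaction inequality \eqref{estimate_ell} with coefficient exactly $1$ and the Gaussian/heat-kernel comparison for \eqref{modlap} under $|\Rm|\leq c_0^2/t$, use the resulting Ricci lower bound to invoke Theorem~\ref{distance_and_volume_estimates_ricci_flow} and Bishop--Gromov, run the blow-up contradiction against the Perelman-type vanishing-AVR theorems for these cones as in Theorem~\ref{simon-main}, and finish with a Cheeger--Hamilton limit as in Theorem~\ref{limit-theorem}. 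You also correctly identify the two genuinely hard inputs (the sharp coefficient in \eqref{estimate_ell} and the Perelman-type theorems for the other cones) as the content of \cite{Bam-Cab-Riv-Wil}, so the proposal is consistent with the paper's account.
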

\noindent{\bf Notes}\\
i) Note that one can apply the Theorem to the case $(M_i,g_i(0)) = (M,g(0))$ : this leads to a  existence time, depending on $v_0$ and $k$ and estimates from above and below on the curvature,  as well as distance and volume control, all depending on $n,v_0, k$  as stated. \\ 
ii)  
The condition WPIC is also examined in the paper, and an estimate of the type, 
$\partt \ell(t) \leq  \Sc(g(t))\ell(t) + C_2\ell^2$  is proved in that setting.
However, in the setting of WPIC, a bound on $\ell\leq 1$ does not imply a lower bound on the Ricci curvature. Hence, one cannot use the results of Lemma \ref{distance_and_volume_estimates_ricci_flow} above. 

\begin{center}
{\bf Local Ricci flows and    local curvature 
conditions}
\end{center}
In some cases the initial manifold being considered may have some bound on curvature from below, but  
no  bound on the  norm of the curvature.
Hence even starting the Ricci flow  presents difficulties. 
We survey here some results on the existence of a Ricci flow in this setting. 
As a prototype case, we consider   a three  dimensional smooth, complete Riemannian manifold $(M^3,g(0))$ which is uniformly non-collapsed, that is 
$\vol(B_{g(t)}(x,1)) \geq v_0 >0$ for all $ x \in M,$ $\Rc(g(0)) \geq -\ep,$  $\ep \leq \ep(v_0),$ but we do not assume bounded curvature. That is
$\sup_M |\Rm(g(0))| = \infty$ is possible.

One method to obtain  existence of a Ricci flow on such a manifold, is to construct {\it local Ricci flows} on balls of radius $s>0.$ 
Uniform estimates are  also proved on interior regions of each of the local flows, which guarantee that a limiting solution  $(M,g(t))_{t\in [0,T)}$ 
 as   $s \to \infty$ exists and satisfies a bound    $ |\Rm(g(t))|\leq \frac{C_0(v_0)}{t}$ for all $t\in (0,T)$ and $\Rc(g(t)) \geq -C(v_0).$ 
In order to start the local flow, one   performs a conformal change of the metric near the boundary of the Ball $B_{g(0)}(x_0, s )$ being considered  as introduced by Hochard in \cite{hochard}.  
This  leads to a smooth, complete, connected  metric of bounded curvature, which is unchanged on the region $B_{g(0)}(x_0,r := s-\frac{1}{2})$.
One then flows this complete space for a short time $t_1$. Localising the estimates from the beginning of this section, 
leads to estimates of the type 
$|\Rm(g(t))| \leq \frac{C_0}{t}$  and
$\Rc(g(t))\geq -  1 $ for $t\leq t_1$ on $B_{g(0)}(x_0,r  -  C_1  \sqrt{t_1}).$
Now one performs a conformal change again on 
$B_{g(0)}(x_0,r  -  C_1  \sqrt{t_1})$ for the metric $g(t_1)$ and repeats the procedure. 
Continuing on in this way and   keeping careful track of the constants and the estimates proved, one is  able to obtain a local flow, which is defined for a time 
$T(v_0)>0$ on $B_{g(0)}(x_0, s-1 )$ and has  a lower bound on the Ricci curvature  $\Rc(g(t)) \geq -1$ and an upper bound of the form $|\Rm(t)|\leq \frac{c^2_0}{t}$  for all $t\in [0,T(v_0)]$ on $B_{g(0)}(x_0, s-2 ).$ 
Letting $s \to \infty$ leads to a solution as in Theorem \ref{simon-main}.
See the proof in Simon/Topping \cite{SiTo2} for the details. 
\begin{thm}(\cite{SiTo1}, \cite{SiTo2},\cite{hochard}) 
Theorem \ref{simon-main} and Theorem \ref{limit-theorem} are still correct, without the assumption that $(M,g(0))$ ($(M_i,g_i(0))$ ) has (have)  bounded curvature. 
\end{thm}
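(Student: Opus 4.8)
The plan is to prove the strengthened Theorem \ref{simon-main}; the strengthened Theorem \ref{limit-theorem} then follows without any change, since its proof used only the conclusions of Theorem \ref{simon-main} together with Gromov's Theorem \ref{GromThm} and Cheeger--Hamilton compactness, none of which is affected by dropping bounded curvature. The only obstruction to the old argument is that, when $\sup_M|\Rm(g(0))|=\infty$, there is no global bounded-curvature Ricci flow to start with, hence no Cheeger--Hamilton limit to extract. The remedy is to replace the global flow by a limit, as $s\to\infty$, of \emph{local} flows defined on the balls $B_{g(0)}(x_0,s)$.

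First I would, following Hochard \cite{hochard}, perform a conformal modification of $g(0)$ supported in the collar $B_{g(0)}(x_0,s)\setminus B_{g(0)}(x_0,s-\tfrac12)$, producing a smooth, complete, connected metric $\ti g_0$ of \emph{bounded} curvature which coincides with $g(0)$ on $B_{g(0)}(x_0,s-\tfrac12)$, and for which $\vol(B_{\ti g_0}(x,1))\ge v_0'$ and $\Rc(\ti g_0)\ge -k'$ still hold on that inner ball, with $v_0',k'$ depending only on $v_0$ and $\ep$. To this complete metric I apply the already-established (bounded-curvature) Theorem \ref{simon-main}, obtaining a flow $\ti g(t)$ on a short interval together with its curvature, distance and volume estimates.

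The second step is to localise the ingredients of Section \ref{sec-ricci-flow-non-collapsed-non-neg}: the curvature-lower-bound propagation of the first Lemma of \cite{SimonCrelle3D} (conclusions (c),(d)), the distance and volume control of Theorem \ref{distance_and_volume_estimates_ricci_flow}, and the blow-up step against Perelman's Theorem \ref{PerThm}, all have interior versions (cf. the local statements announced in \cite{SiTo1}): on $B_{g(0)}(x_0,s-\tfrac12-C_1\sqrt{t})$ one obtains $|\Rm(g(t))|\le c_0^2/t$ and $\Rc(g(t))\ge -1$, the non-collapsing being transported by Bishop--Gromov and a curvature blow-up point producing, after rescaling, a complete ancient solution with $\curlR\ge 0$ and $AVR>0$, contradicting Theorem \ref{PerThm}. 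The crucial point is that the radius is lost only at rate $C_1\sqrt{t}$, with $c_0,C_1$ depending only on $v_0$.

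Now the iteration: having flowed to time $t_1$ with good interior estimates on $B_{g(0)}(x_0,s-\tfrac12-C_1\sqrt{t_1})$, I re-apply Hochard's conformal change to $g(t_1)$ on a slightly smaller ball, flow again for a time $t_2$, and repeat, arranging $\sum_j C_1\sqrt{t_j}<\tfrac12$ while $\sum_j t_j\ge T(v_0)>0$; at each restart the previous interior curvature and Ricci estimates are exactly what controls the geometry where the new conformal factor is glued, so the bounds $|\Rm|\le c_0^2/t$ and $\Rc\ge -1$ persist on the whole interval $[0,T(v_0)]$ over $B_{g(0)}(x_0,s-1)$. Finally I let $s\to\infty$, extract a Cheeger--Hamilton limit \cite{Ham-Com} for $t>0$ to obtain a smooth solution $(M,g(t))_{t\in(0,T(v_0))}$ on all of $M$ with the estimates of Theorem \ref{simon-main}, and use the interior distance estimates to produce the limiting metric $d_0$ at time $0$. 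I expect the main obstacle to be the uniform-in-$s$ and uniform-in-the-number-of-restarts bookkeeping of constants — in particular showing the existence time $T(v_0)$ does not degenerate and that the accumulated radius losses sum to less than $1$ — together with making the localised maximum-principle arguments (both the tensor maximum principle for the curvature lower bounds and the local blow-up step against Perelman's theorem) genuinely rigorous on balls with the $C_1\sqrt t$ radius loss; this is precisely the technical heart of \cite{SiTo2}.
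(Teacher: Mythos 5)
Your proposal follows essentially the same route as the paper's own sketch: Hochard's conformal modification near the boundary of $B_{g(0)}(x_0,s)$ to obtain a complete bounded-curvature metric, application of the bounded-curvature result, localised $|\Rm|\le c_0^2/t$ and Ricci lower bound estimates on balls shrinking at rate $C_1\sqrt{t}$, iteration of the conformal change with bookkeeping so that radius losses stay below a fixed amount while the total time reaches $T(v_0)$, and finally $s\to\infty$ with a Cheeger--Hamilton limit, after which Theorem \ref{limit-theorem} follows unchanged. This matches the argument outlined in the paper (with details deferred to \cite{SiTo2}, \cite{hochard}), and your identification of the uniform-constant bookkeeping and the localised maximum-principle/blow-up steps as the technical heart is accurate.
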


In the paper \cite{Bam-Cab-Riv-Wil} they showed,  using conformal changes of the original metric,   and taking a limit, that one can obtain a solution to Ricci flow starting with a uniformly non-collapsed manifold with $\curlR(g(0)) + k I \in \curlC_{CO}$ or $\curlR(g(0)) + k I \in  \curlC_{IC2},$ without the assumption that the norm of the curvature at time zero  of the manifolds in question are  bounded. 
In the Ph.D. thesis of Hochard \cite{HochardThesis}, and the  paper  of 
Lai 
\cite{Lai1}   it was shown that one can, using the estimates from \cite{Bam-Cab-Riv-Wil},  generalise the methods 
of \cite{SiTo2}, \cite{hochard} to obtain a Ricci flow for a uniformly non-collapsed Riemannian manifold with $\curlR(g(0)) + k I \in \curlC,$ and $\curlC = \curlC_{IC1}$ or $\curlC = \curlC_{2CO},$
and in  Lee/Tam \cite{LeeTam}, the authors  were able to combine   these methods with methods from K\"ahler geometry, to obtain a Ricci flow in the case that  $\curlR(g(0)) + k I \in \curlC_{HB}$. 
\begin{thm}(\cite{Lai1}, \cite{hochard},\cite{HochardThesis} \cite{LeeTam},\cite{Bam-Cab-Riv-Wil}) 
The results of Theorem \ref{bamler-cabezas_rivas-wilking} still hold
if we remove the assumption that the manifolds $(M_i,g_i(0))$ have
bounded curvature. 
\end{thm}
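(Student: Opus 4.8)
The plan is to combine the local Ricci flow / conformal surgery construction of Hochard \cite{hochard} and Simon--Topping \cite{SiTo2} with the sharp evolution inequality \eqref{estimate_ell} for the quantity $\ell(t) = \inf\{\al \in [0,\infty) \ | \ \curlR(g(t)) + \al I \in \curlC\}$ from \cite{Bam-Cab-Riv-Wil}; the essential point is that \emph{none} of the constants $c_0, S, C_2$ appearing in Theorem \ref{bamler-cabezas_rivas-wilking} and in \eqref{estimate_ell} depend on an upper bound for the curvature, only on $n$, $v_0$ and $\max(k,1)$. As in the proof sketch of Theorem \ref{simon-main}, a parabolic rescaling reduces us to the case in which at time zero $\ell(0) \leq \ep$ with $\ep = \ep(n,v_0) > 0$ as small as we like, while $\vol_{g_i(0)}(B_{g_i(0)}(x,1)) \geq v_0$ and $\curlR(g_i(0)) + k_i I \in \curlC$ with $k_i$ small.

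First I would fix a large radius $s$ and a point $x_0 \in M_i$, and perform Hochard's conformal modification of $g_i(0)$ in the collar $B_{g_i(0)}(x_0,s) \setminus B_{g_i(0)}(x_0, s - \tfrac12)$, obtaining a smooth, complete, bounded-curvature metric which agrees with $g_i(0)$ on $B_{g_i(0)}(x_0, s - \tfrac12)$. Since Theorem \ref{bamler-cabezas_rivas-wilking} is available for bounded-curvature data, one flows this metric and localises the estimates of that theorem: on $B_{g_i(0)}(x_0, s - \tfrac12 - C_1\sqrt{t_1})$ one obtains, for a step length $t_1 = t_1(n,v_0) > 0$, the bounds $|\Rm(g(t))| \leq c_0^2/t$ and $\curlR(g(t)) + k c_0^2 I \in \curlC$, together with the distance and volume control of Theorem \ref{distance_and_volume_estimates_ricci_flow}. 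This localisation is carried out by inserting a Perelman-type cutoff function into the evolution inequality \eqref{estimate_ell} and using $d_t(x,y) \geq d_s(x,y) - \ga(n) c_0 \sqrt{s-t}$ to control the support of the cutoff, and it loses nothing precisely because $C_2$ in \eqref{estimate_ell} is purely dimensional.

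Next I would iterate: conformally modify $g(t_1)$ near the boundary of the good region, flow again for another time $t_1$, and repeat. Because $t_1$ and all the constants depend only on $n, v_0, \max(k,1)$ and not on the (a priori infinite) curvature of $g_i(0)$, finitely many steps reach a time $T = T(n, v_0, \max(k,1)) > 0$, giving a local flow on $B_{g_i(0)}(x_0, s-2)$ with the estimates of Theorem \ref{bamler-cabezas_rivas-wilking} for $t \in (0,T]$. Letting $s \to \infty$ and applying Shi's interior derivative estimates together with the Hamilton--Cheeger compactness theorem (the uniform bounds $|\Rm| \leq c_0^2/t$ and $\vol(B_{g(t)}(x,1)) > v_0/2$ supply the needed local control) extracts a complete limiting Ricci flow $(M_i, g_i(t))_{t \in (0,T]}$ with all the conclusions; in the case $\curlC = \curlC_{HB}$ the surgery is done compatibly with the complex structure as in Lee--Tam \cite{LeeTam}, yielding a K\"ahler Ricci flow. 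Finally, letting $i \to \infty$ exactly as in the proof of Theorem \ref{limit-theorem} — a further Hamilton--Cheeger limit, plus the distance estimates to identify the $t \downto 0$ limit with $(X,d_X)$ — gives the statement.

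The hard part will be the interaction between the conformal surgery and the curvature cone $\curlC$: for $\curlC_{CO}, \curlC_{2CO}, \curlC_{IC1}, \curlC_{IC2}, \curlC_{HB}$ a conformal change affects the entire curvature operator and can destroy membership in $\curlC$, so one cannot hope to preserve the cone pointwise under the surgery. The resolution — worked out in \cite{HochardThesis}, \cite{Lai1} and \cite{Bam-Cab-Riv-Wil} — is instead to confine the surgery to a thin collar near the boundary, so that the bad curvature it introduces stays near that boundary, and to exploit the finite ``speed'' at which the Ricci flow can displace distances (quantitatively, $d_t(x,y) \geq d_s(x,y) - \ga(n) c_0 \sqrt{s-t}$) to guarantee that this contamination does not reach the interior region $B_{g_i(0)}(x_0, s-2)$ within time $T$. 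Making this quantitative while keeping every constant independent of the initial curvature bound, so that the iteration closes up, is the delicate point, and it is exactly here that the sharpness of \eqref{estimate_ell} — no extra factor in front of $\Sc(g(t))\ell(t)$ — is indispensable.
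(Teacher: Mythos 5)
Your proposal follows essentially the same route as the paper: Hochard-style conformal modification near the boundary of large balls to obtain bounded-curvature local flows, localisation of the Bamler--Cabezas-Rivas--Wilking estimates with constants depending only on $n$, $v_0$ and $\max(k,1)$, iteration of the surgery-and-flow step, and then limits $s\to\infty$ and $i\to\infty$ via Cheeger--Hamilton compactness, with the K\"ahler case handled as in Lee--Tam. The paper itself only sketches this scheme and defers the delicate points (in particular the local persistence of the cone conditions after the conformal surgery) to \cite{hochard}, \cite{SiTo2}, \cite{HochardThesis}, \cite{Lai1}, \cite{LeeTam} and \cite{Bam-Cab-Riv-Wil}, exactly the works and the mechanism you invoke.
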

Notes: 
In the paper \cite{LeeTam} they considered an even weaker condition, namely
that the orthogonal holomorphic bisectional curvature and the Ricci curvature are bounded from below when showing that a Ricci flow exists: see
\cite{LeeTam} (the conditions they considered are implied by  the condition, that the holomorphic bisectional curvature is bounded from below) . 

In the process of producing the local Ricci flow, it was  noticed that the arguments are purely local. 
These methods were subsequently   pushed even further to obtain so called {\it Pyramid Ricci flows}, $g(t)$  which are defined on space time regions  $\Omega \subseteq M \times(0,T),$ 
$\Omega:= \cup_{i=1}^{\infty}B_{g(0)}(p,i)\times [0,T_i]$ 
where $T_1(v_0,k)>T_2(v_0,k)>T_3(v_0,k)\ldots$ and $T_i \to 0$ as $i \to \infty,$
 under the assumption that $\vol_{g(0)}(B_{g(0)}(p,1))\geq v_0,$
 and $\curlR(g(0)) + k I  \in \curlC$
 where $\curlC$ is one of the conditions  (i)-(iv) or (vi)-(ix), and $p\in M$ is a fixed point.  
As an application, one can prove for example,   that  any possible Gromov-Hausdorff pointed limit   of smooth three manifolds $(M^3_i,g_i(0),p_i)$  satisfying  $\vol_{g_i(0)}(B_{g_i(0)}(p_i,1))\geq v_0,$
 and $\Ricci(g_i(0)) + kg_i(0) \geq 0$  
   is a topological manifold :  Simon/Topping \cite{SiTo2}. 
See the following papers for further results of this type and  further information on this setting: 
\cite{SiTo2},  McLeod/Topping \cite{TopMc1}, McLeod/Topping \cite{TopMc2}, McLeod \cite{Mc}, 
\cite{HochardThesis}, \cite{LeeTam},
  (cf. Liu \cite{Liu1},\cite{Liu2}).

\begin{center} 
{\bf Flowing in a pinched setting }
\end{center}
Here we give a short overview of some results on Ricci-flowing Ricci-pinched  respectively  PIC1-pinched manifolds.\\
{\bf $(M^3,g(0))$ 3-dimensional and Ricci pinched}.\\
In the paper of Lott \cite{Lott-Ricci-pinched}, it is shown that if $(M^3,g(0))$ is smooth, connected , three-dimensional, complete, non-compact, has bounded curvature, and is Ricci pinched, non-flat, that is
\begin{eqnarray*}
&& \Rc(g(0)) \geq \si_0 \Sc(g(0)) g(0)\\
&& \Sc(g(0))(p)>0 \text{ at  (at least) one point } p 
\end{eqnarray*}
for some $\si_0>0$,    then there is a $\si_1, c_0>0$ and a Ricci flow solution 
$(M^3,g(t))_{t\in (0,\infty)}$ with  
\begin{eqnarray}\label{thestuff}
&& |\Rm(g(t))|\leq \frac{c^2_0}{t}\cr
&& \Rc(g(t)) \geq \si_1 \Sc(g(t)) g(t)>0\cr
&& AVR(g(t)) = V_0> 0 
\end{eqnarray}
 for all $t\in (0,\infty)$ for some $V_0>0$ where
 \begin{eqnarray*}
AVR(M,g):= \lim_{r \to \infty } \frac{\vol_{g}(B_{g}(x,r))}{r^3}.
\end{eqnarray*}
In particular $AVR(M,g(0))>0.$ 
  Blowing down this solution, that is considering the Cheeger-Hamilton limit, 
  $(Z,\ell(\hat t),x)_{\hat t \in (0,\infty)}= \lim_{j\to \infty} (M,\frac{1}{j}g(\hat t j),p)$ one obtains a solution to Ricci flow coming out of a  metric cone, where the cone is an Alexandrov space with non-negative sectional curvature (see Deruelle/Schulze/Simon \cite{DSS2} for details).  
In the paper \cite{DSS2}, the authors then show, that this solution must be close to an  expanding soliton coming out of the cone. This, combined with the fact that the solution is Ricci pinched,  then implies that the 
solution is in fact flat, and hence the original space $(M,g(0))$ is isometric to $(\R^3,\de)$ which contradicts  the assumption that $\Sc(g(0))(p)>0$ at (at least) one point $p$.
This then proves a conjecture  of Hamilton in the bounded curvature setting.  See \cite{DSS2} for the details. The conjecture  as stated in \cite{Lott-Ricci-pinched} is:\\
{\bf Hamilton Conjecture  (bounded curvature case) } : 
If  $(M,g(0))$ is smooth, connected, complete,  and   has bounded curvature, and is  Ricci pinched, then it is either Riemannian flat or  compact.\\
In the book  \cite{Chow-Lu-Ni}, the conjecture of Hamilton is stated  without the assumption of bounded curvature:\\ 
{\bf Hamilton Conjecture  (general  case)} :  
If  $(M,g(0))$ is smooth, connected, complete  and  Ricci pinched, then it is either Riemannian flat or  compact.\\ 
In the paper Lee/Topping \cite{Lee-Top-3d}, the authors  consider Ricci pinched three manifolds, which don't necessarily have bounded curvature. Using   methods touched upon 
above (local Ricci flows, local distance, local volume, local curvature estimates  ...), and other methods,  they show that   in the case that
$(M,g(0))$ is not assumed to have bounded curvature, that then there is a solution
$(M,g(t))_{t\in (0,\infty)}$  satisfying \eqref{thestuff}, and hence, using the result from the bounded curvature setting,  it follows that  the conjecture of Hamilton in the general setting,   
that is without the  assumption that the curvature is bounded, is correct. \\
\noindent {\bf $(M^n,g(0))$ PIC1 Pinched. }\\
Let $(M,g(0))$ be  smooth, connected, complete. 
We say $(M,g(0))$ is $\si_0>0$ PIC1 pinched, if  
$\curlR(g(0)) -\si_0 \Sc(g_0) I \in \curlC_{IC1}$.
In the paper of Lee/Topping \cite{Lee-Top-PIC2}, they show,
for non-compact $\si_0>0$ PIC1 pinched manifolds, using methods touched upon above 
(local Ricci flows, local distance, local volume, local curvature estimates,  ...), and other methods,  that there is a solution $(M,g(t))_{t\in (0, \infty)}$ 
starting with $(M,g(0))$  such that  $(M,g(t))$ is $\si_1>0$ PIC1 pinched,
and $|\Rm(g(t))|\leq \frac{c^2_0}{t}$ for all $t\in (0,\infty)$.

A second major result of the paper (\cite{Lee-Top-PIC2}) shows, that if we further assume that
$\curlR(g(0)) \in \curlC^{+}_{IC2},$ then in fact  
$(M,g(0))$ is flat.

In a subsequent paper  of Deruelle/Schulze/Simon, \cite{DSS3},     the authors investigate  the solution $(M,g(t))_{t\in (0,\infty)}$
of \cite{Lee-Top-PIC2} further, and show for initial data that is PIC1 pinched, that if    one further  assumes   $AVR(M,g(0)) >0$, without necessarily assuming  $\curlR(g(0)) \in \curlC_{IC2},$ that then $(M,g(0))$ is isometric to $(\R^n,\de)$.

We  {\bf conjecture } that the condition $(M,g(0))$ is complete, smooth, non-compact, connected,   non-flat, PIC1-pinched, 
will imply  $AVR(M,g(0)) >0$.
This, combined with the results of \cite{DSS3} on the solution of  \cite{Lee-Top-PIC2}, as  explained above,  would then show the generalised PIC1 Hamilton Conjecture  in any dimension:\\
{\bf    Generalised PIC1 Hamilton conjecture }:  $(M,g(0))$   smooth, connected, complete,    $\curlR(g(0)) -\si_0 \Sc(g_0) I \in \curlC_{IC1} \implies  (M,g(0))$ is flat or compact.\\

We note that Topping conjectured in \cite{TopConj} that any  complete, non-compact, smooth, connected  $n$- manifold which is $PIC1$ will be diffeomorphic to   Euclidean $n$ space.  
 
 \begin{center}
  {\bf Flowing in an almost Euclidean setting }
 \end{center}
In \cite{Per1},  Perelman proved his  {\it Pseudolocality Theorem}, Theorem 10.1 in  \cite{Per1},  for {\it almost euclidean} regions.
Various other notions of  {\it almost euclidean} and {\it Pseudolocality } type Theorems have since then been considered  and proved ( 
see for example   \cite{ChauTamYu} , \cite{WangY}, \cite{TW},  \cite{Wang2}, and the references in these papers and can be used to obtain some rough notions of weak solutions to Ricci flow starting with potentially  non-smooth almost euclidean data. 
As a prototype theorem in the almost euclidean  setting, we present  here a theorem of Chan/Chen/Lee, \cite{ChanChenLee}.
\begin{thm}(Chan/Chen/Lee, Theorem 1.4  in \cite{ChanChenLee}) \label{ChanChenLee}
For any positive integer $n\geq 3$ and constant $A\geq 1000n$, there exists a constant $\ep_0(n,A)>0$ such that the following holds. 
Suppose $(M_i^n,g_i(0),p_i)$ is   a pointed sequence of smooth, complete, connected Riemannian manifolds  with bounded curvature satisfying  the following properties:
\begin{enumerate}
\item  $\Rc(g_i(0)) \geq -\lambda,$
\item $(\int_{B_{g_i(0)}(q,2)} |\Rm(g_i(0))|^{\frac n 2} d\vol_{g_i(0)} )^{\frac 2 n} \leq \ep_0$ for all $q$ in $M_i,$ 
\item $\nu(B_{g_i(0)(q,5}),g_i(0),1) \geq -1$ for all $q$ in $M_i,$ 
\end{enumerate}
where $\nu$ is the localised entropy, a localisation of Perleman's entropy introduced in \cite{Wang}.
Then there exists  solutions $(M_i,g_i(t)),p_i)_{t\in [0,T)}$ for a uniform $T>0$ (independent of $i$) 
satisfying, $|\Rm(g_i(t)|\leq \frac{C(n,A) \ep_0}{t},$
$inj(M_i,g_i(t))   \geq \ga(n) \sqrt{t}$  for all $t\in (0,T)$ and, after taking a subsequence,  the  pointed Cheeger-Hamilton limit $(M,g(t),p)_{t\in (0,T)}$  of 
$(M_i,g_i(t)),p_i)_{t\in (0,T)}$   exists and also satisfies  $|\Rm(g(t)|\leq \frac{C(n,A) \ep_0}{t},$
$inj(M,g(t))   \geq \ga(n) \sqrt{t}$  and 
 $$(\int_{B_{g(t)}(q,2)} |\Rm(g(t)|^{\frac n 2} d\vol_{g(t)} )^{\frac 2 n} \leq C(n,A)\ep_0,$$
 for all $t\in (0,T).$ 
Furthermore 
a subsequence of $(M_i,g_i(0),p_i)$
 converges in the pointed Gromov-Hausdorff sense to a complete
metric space $(X,d_X,p)$,and there exists a complete metric space $(M,d_0)$ isometric to $(X,d_X),$ 
such that \\
  $\ga(n) (d_0(x,y))^{2} \geq   d(g(t))(x,y) \geq  d_0(x,y) -  \sqrt{\ep_0}\ga(n)\sqrt{t}$  and
$  d(g(t))(x,y) \geq  \ga(n) d_0(x,y)^{\frac 2 3}$ for all $x,y \in M$ with $d_0(x,y)\leq 1.$
Hence the topology of $(M,d_0),$ and   consequently $(X,d_X),$ is the same as that of $(M,d(g(t))$ for any $t>0$.  
\end{thm}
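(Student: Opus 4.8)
\emph{Proof strategy.}
The plan is to construct the flows $(M_i,g_i(t))$ by a single continuity argument whose engine is an $\ep$-regularity statement for Ricci flow: smallness of the curvature in $L^{n/2}$ on unit balls together with a lower bound on the localised entropy forces the sharp smoothing $|\Rm(g_i(t))|\le C(n,A)\ep_0/t$. Each $(M_i,g_i(0))$ already has bounded curvature, so Shi's theorem produces a smooth Ricci flow for a (possibly $i$-dependent) time; the content is to promote this to a uniform existence interval $[0,T)$ with $T=T(n,A)$ and to the uniform estimates claimed. I fix $i$ and drop it from the notation.

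First I would record the local analytic inputs. By the (approximate) monotonicity of Bing Wang's localised entropy $\nu$ along Ricci flow, the hypothesis $\nu(B_{g_i(0)}(q,5),g_i(0),1)\ge -1$ propagates on parabolic neighbourhoods for a uniform short time, keeping $\nu\ge -C(n)$; this yields a uniform local Sobolev inequality and uniform local $\kappa$-non-collapsing for $g_i(t)$ at unit scale on $[0,T]$ --- the sole role of hypothesis (3). Then comes the $\ep$-regularity: from $(\partt-\lap)|\Rm|\le c_n|\Rm|^2$, one tests with $\phi^2|\Rm|^{n/2-1}$ for a cutoff $\phi$, integrates by parts, and uses the local Sobolev inequality on $\phi|\Rm|^{n/4}$ to absorb the gradient term into the reaction term as long as $\int_{\mathrm{supp}\,\phi}|\Rm|^{n/2}$ is small; this shows that the smallness of $\int_{B(q,2)}|\Rm|^{n/2}$ at $t=0$ persists, with a slightly larger constant, on slightly smaller balls for a short time. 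A parabolic Moser iteration on the same differential inequality, again driven by the $L^{n/2}$-smallness, then upgrades the integral bound to the pointwise bound $|\Rm(g_i(t))|\le C(n,A)\ep_0/t$ on a parabolic neighbourhood of $(q,t)$; the constant $A\ge 1000n$ enters as the ratio between the ball carrying the hypotheses and the ball on which one concludes.

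Next I would run the bootstrap. Choose $C_0=C_0(n,A)>C(n,A)$ and suppose $t_0\in(0,T]$ is the first time at which $\sup_{M_i}|\Rm(g_i(\cdot,t_0))|=C_0\ep_0/t_0$. Then $|\Rm|\le C_0\ep_0/t$ on $[0,t_0]$, which is exactly what is needed to propagate the $L^{n/2}$ bound forward and apply the $\ep$-regularity estimate, yielding $|\Rm(g_i(\cdot,t_0))|\le C(n,A)\ep_0/t_0<C_0\ep_0/t_0$, a contradiction (the base case, that the strict inequality holds for small $t$, follows from the bounded initial curvature and continuity). Hence $|\Rm(g_i(t))|\le C_0\ep_0/t$ on the whole existence interval, and since this keeps $\sup_M|\Rm(g_i(t))|$ finite up to any $\tau<T$, the standard extension criterion gives a complete solution on $[0,T)$. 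The bound $\mathrm{inj}(M_i,g_i(t))\ge\ga(n)\sqrt t$ then follows from $|\Rm(g_i(t))|\le C_0\ep_0/t$ with $\ep_0$ small, together with the local non-collapsing above. For the limit, Gromov compactness (using $\Rc(g_i(0))\ge-\la$) gives the pointed Gromov--Hausdorff limit $(X,d_X,p)$, while Hamilton's compactness theorem applied to the uniform bounds $|\Rm(g_i(t))|\le C_0\ep_0/t$ and $\mathrm{inj}\ge\ga(n)\sqrt t$ on compact subsets of $M\times(0,T)$ yields a subsequential smooth limit flow $(M,g(t),p)_{t\in(0,T)}$ inheriting all curvature and integral bounds. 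The distance estimates come from $\Rc(g_i(t))\ge-C_0\ep_0/t$ and Hamilton's inequality $d_t(x,y)\ge d_s(x,y)-\ga(n)\int_s^t\sqrt{\sup_M|\Rm(g_i(r))|}\,dr$, which integrates (since $\int_0^t r^{-1/2}\,dr<\infty$) to $d_t\ge d_0-\ga(n)\sqrt{\ep_0}\sqrt t$, together with the Simon--Topping-type comparison using both $\Rc(g_i(0))\ge-\la$ and $|\Rm(g_i(t))|\le C_0\ep_0/t$ that produces the two-sided bi-H\"older control of $d(g(t))$ against $d_0$ in the statement; this identifies a metric $d_0$ on $M$ with $d(g(t))\to d_0$ locally uniformly, $(M,d_0)$ isometric to $(X,d_X)$, and $d(g(t))$ homeomorphic to $d_0$, just as in Theorem~\ref{distance_and_volume_estimates_ricci_flow} and its local refinements.

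The hard part will be the $\ep$-regularity step together with its localisation: the cutoffs must be transported along the flow and their supports controlled by the very distance estimates that depend on the curvature bound, so the persistence of the $L^{n/2}$ bound, the pointwise $1/t$ decay, and the distance and volume control cannot be separated --- they must be woven into one time-stepped induction with constants tuned delicately, and it is there that the threshold $A\ge 1000n$ and the precise dependence of $\ep_0$ on $(n,A)$ get pinned down. By contrast, the injectivity-radius estimate, the Gromov and Hamilton compactness, and the identification of the limiting metric space are comparatively routine once the uniform bounds are available.
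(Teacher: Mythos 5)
The survey does not reproduce a proof of this theorem --- it is quoted from Chan/Chen/Lee \cite{ChanChenLee} --- so your proposal can only be measured against the cited source and the remarks the survey makes about it. The first half of your strategy (propagating the localised entropy bound of \cite{Wang} to get a uniform local Sobolev/non-collapsing input, an $\ep$-regularity and Moser-iteration argument turning $L^{n/2}$-smallness of curvature into $|\Rm|\leq C(n,A)\ep_0/t$, a continuity/bootstrap argument giving a uniform existence time, and then Gromov plus Cheeger--Hamilton compactness for the limits) is indeed the right skeleton and matches the spirit of the cited paper.

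The genuine gap is in the distance and limit-metric part. You propose to get the two-sided control of $d(g(t))$ against $d_0$ from ``Hamilton's inequality plus a Simon--Topping-type comparison'', i.e.\ essentially Theorem \ref{distance_and_volume_estimates_ricci_flow}. That theorem needs a \emph{uniform-in-time} lower Ricci bound $\Rc(g(t))\geq -k_0^2$ along the flow; here the hypothesis $\Rc(g_i(0))\geq-\lambda$ is only at $t=0$ and is not preserved, and the bound your bootstrap yields, $\Rc(g(t))\geq -C\ep_0/t$, is not integrable at $t=0$, so the exponential upper distance bound $e^{\int k}\,d_s\geq d_t$ degenerates and cannot produce the stated estimates $\ga(n)\,d_0(x,y)^{2}\geq d(g(t))(x,y)$ and $d(g(t))(x,y)\geq \ga(n)\,d_0(x,y)^{2/3}$ (note these are H\"older-type, not Lipschitz-type, bounds --- a sign that a different mechanism is at work). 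In \cite{ChanChenLee} the existence of the limit metric $d_0$ and this bi-H\"older control come from Lemma 2.4 of Huang/Kong/Rong/Xu \cite{HuanKongRongXu} (following unpublished notes of Bamler/Wilking), which exploits the $|\Rm|\leq c/t$ bound together with non-collapsing/heat-kernel-type estimates rather than a preserved Ricci lower bound; this is exactly the feature the survey singles out. Moreover you assert that $d(g(t))\to d_0$ locally uniformly and that this identifies $(M,d_0)$ with $(X,d_X)$; the survey explicitly points out that only $\liminf_{t\downto 0} d_t(x,y)\geq d_0(x,y)$ is obtained in the proof of Theorem 1.4 of \cite{ChanChenLee}, and that local uniform convergence $d(g(t))\to d_0$ is \emph{not} established there --- it is listed as open problem (m) in Section \ref{open-problems}. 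So the final paragraph of your proposal both uses an argument that does not apply under these hypotheses and claims a conclusion stronger than what is actually known.
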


An interesting feature of this theorem  is the existence of the limit metric $d_0$ 
which is guaranteed by Lemma 2.4 of  
Huang/Kong/Rong/Xu \cite{HuanKongRongXu}  (which follows some unpublished notes of Bamler/Wilking , \cite{BamWil}) 
 as explained in the proof of Lemma 2.4  in 
\cite{HuanKongRongXu} and the proof of Theorem 1.4 in \cite{ChanChenLee}).
Although we see that 
$ \liminf d_t(x,y) \geq  d_0(x,y)$ for all $x,y \in M,$ it is not shown  in the proof of  of Theorem 1.4 in \cite{ChanChenLee}  that $d(t) \to d_0$ locally uniformly.

In Remark  1.3 of the paper \cite{ChanChenLee} the authors explain that the lower bound on the Ricci curvature in Theorem \ref{ChanChenLee} can be weakened to a lower bound on the scalar curvature, if one also includes a uniform volume doubling condition at some fixed  scale.


\section{Weakly differentiable Riemannian metrics and weak curvature lower bounds}\label{weakmetrics}
We saw in Section \ref{c0flowsection}, that one can transform the Ricci flow, using the harmonic heat flow, to the Ricci-DeTurck flow, which
is a parabolic partial differential system of equations.
One can then examine the solution using methods from the theory of parabolic partial differential systems of equations. This was carried out in
\cite{SimonC01} (see also \cite{SimonC02}) for $C^0$ metrics and metrics which are $1+ \ep(n)$-fair    to a smooth background metric $h$, with bounded curvature,  and in \cite{KochLamm1, KochLamm2} for similar settings.
In the paper of Lamm/Simon, \cite{LammSimon}, the case that the initial metric $g(0)$  is $W^{2,2}$ and $a$-fair to a smooth complete, metric $h$  with bounded curvature,  for an arbitrary $a>0$  was considered in the case $n=4$.  These  methods were used, generalised  and improved  to the  case of arbitrary dimension $n,$ assuming   $g(0)$ is in $W^{1,n}$ and $a$-fair to a smooth, complete, bounded curvature  metric $h$, for an arbitrary $a>0$, by Chu/Lee,  \cite{ChuLee}. 
In the paper of Jiang/Sheng/Zhang, \cite{JiShZh},  the authors considered the case that $g(0) \in W^{1,p}(M)$ with $M$ closed, smooth, and  $p>n.$ 
In the $W^{1,p}$ setting with $p>n,$ 
the Sobolev-embedding theorems show one that in fact $g(0)\in C^0(M),$ and so the paper of Simon \cite{SimonC01} (or Koch/Lamm \cite{KochLamm1}, \cite{KochLamm2})  can be used to prove existence of a flow, where $g(t) \to g(0)$ uniformly locally as $t\downto 0$.

In each of the papers \cite{LammSimon}, \cite{JiShZh}, \cite{ChuLee}, 
preservation of lower scalar curvature bounds is investigated. 
In the paper of \cite{LammSimon}, one considers the setting that  there is an $  0< a < \infty$ such that $(M,h)$ is a smooth, connected, complete Riemannian manifold 
such that  $ \sup_M |\grad^k \Riem(h)|\leq c(k) < \infty $ for each $k\in \N$  and $inj(M,h) \geq i_0> 0,$
$g(0) \in W_{loc}^{2,2}$ and 
\begin{eqnarray*}
&&\int_M |\grad^2 g(0)|^2 + |\grad g(0)|^2 d\vol_h   < \infty \\
&&  \frac{1}{a} h \leq g(0) \leq ah . 
\end{eqnarray*}

In the first part of the paper, one considers only the Ricci-DeTurck flow. 
As a first step, one notices by writing things in local Riemannian coordinates, that a local Sobolev inequality holds, and  that for any $\ep>0$, 
that after scaling the metrics $g(0)$ and $h$ by the same constant  (once) (we denote the scalings of the metrics  $g(0)$ and $h $ also by $g(0)$ and $h$)  we may assume that 

\begin{eqnarray}\label{scaleinvariant}
&&E(x):= \int_{B_1(x)}  |\grad^2 g(0)|^2 + |\grad g(0)|^4 d\vol_h  \leq \ep  \text{ for all } \ x \in M \cr
&&  \frac{1}{a} h \leq g(0) \leq ah   
\end{eqnarray}

The quantities in \eqref{scaleinvariant}  are scale invariant: if we scale  $g(0)$ and $h$ by  the same  constant $c\geq 1$,     and we denote the scalings of the metrics    also by $g(0)$ and $h$, then \eqref{scaleinvariant}  still holds.
This is used in many of  the scaling arguments in the paper.

After scaling $h$ and $g(0)$ once again, we may,   without loss of generality, also assume that  
\begin{eqnarray} \label{hassumptionsscaled}
 && (M,h) \ \ \mbox{is a smooth, connected, complete manifold  such that}\cr
&&\sup_M  {}^h|\gradh^i\Riem(h)|  < \infty \mbox{ for all } i\in \N_0 \cr 
&& \sum_{i=0}^4 \sup_M  {}^h|\gradh^i\Riem(h)|  \leq \de_0(a)  \\
 && inj(M,h)  \geq 100,\nonumber
 \end{eqnarray}
for a small positive constant $\de_0(a)$ of our choice.

Now one  mimics arguments from the theory of harmonic map heat flow (see for example \cite{Struwe}) and shows that a smallness of the energy 
\begin{eqnarray} 
&&  \int_{B_1(x)}  |\grad^2 g(t)|^2 + 
|\grad g(t)|^4 d\vol_h  \leq  \si(b, \ep),   \label{smallenergy} 
\end{eqnarray}
where $ \si(b, \ep) \to 0$ as $\ep\to 0$  
holds under Ricci flow for $t\leq T(n,b)$ as long as $  \frac{1}{b} h \leq g(t) \leq bh$ remains true, 
if $\int_{B_1(x)}  |\grad^2 g(0)|^2 + |\grad g(0)|^4 d\vol_h  \leq \ep $ holds at time zero. 
 
An important part of the argument is to show that indeed   $  \frac{1}{b} h \leq g(t) \leq bh$  remains true  for some $ b= b(a)$ if  
 $t\leq T(a)$  if the initial $\ep$ is chosen small enough. Assuming  this first   fails at time $t_0$ then one scales everything once again 
 by $\frac{1}{t_0}$, so that one may assume this first fails  at time $t_0=1.$ Standard parabolic arguments show that
 we must have 
 \begin{eqnarray}\label{gradients}
 |\grad^k g(t) |^2\leq \frac{c_k(b(a))}{t^k}. 
 \end{eqnarray} 
 Now using  that the energy  remains small, \eqref{smallenergy}, one sees  that  
 this implies 
 \begin{eqnarray*}
&& |\grad g(1)|^2\leq c(b(a),\ep)\\
&& \int_{B_{h}(x,2)} |g(1)-h|^2 \leq  c_1 a^2
 \end{eqnarray*}
 where $ c(b(a),\ep)  \to 0$ as $\ep \to 0,$ and the second  inequality is shown by integrating   $\partt \int_{B_{h}(x,2)} |g(1)-h|^2 $ from $t=0$ to $t=1$.
 But this tells us $|g(1)-h|^2  \leq c_2 a^2$  for $\ep$ small enough, which  leads to a contradiction if $b =b(a) \geq c_3a$, and $\ep = \ep(a)$ is chosen small enough.

The main result  obtained in \cite{LammSimon} for the Ricci DeTurck flow is  the following:
\begin{thm}\label{main1_start}
For any $1<a<\infty  $ 
 there exists a constant $\ep_1 =\ep_1(a) >0$ with the
following properties.
Let $(M^4,h)$  be a smooth four dimensional Riemannian manifold which satisfies  \eqref{hassumptionsscaled}. 
Let $g(0)$  be  a $W_{loc}^{2,2} \cap L^{\infty}$  Riemannian metric, not necessarily smooth, which satisfies 
\begin{align} 
& \ \ \frac 1 { a } h \leq g(0) \leq { a } h  \tag{a}  \label{a2}\\
& \ \ \int_{B_2(x)}  (|\gradh g(0)|^2 +   |\gradh^2 g(0)|^2)dh \leq \ep
\ \ \mbox{ \rm for all } \ \ x \in M  ,
 \tag{b} \label{b2}
 \end{align}
where $\ep \leq \ep_1.$ 
Then there exists a constant $T=T(a,\ep)>0$ and a smooth solution
$(g(t))_{t \in (0,T]}$ to \eqref{Meq} such that

\begin{align}
 & \ \ \ \ \frac{1}{400 a } h \leq g(t) \leq 400 a h  \tag{${\rm a}_t$} \label{a_t2}  \\
 &  \ \ \ \ \int_{B_1(x)} ( |\gradh g(\cdot,t)|^2 +   |\gradh^2 g(\cdot,t)|^2 )dh\leq 2\ep \ \  \tag{${\rm b}_t$}  \label{b_t2} \\
 & \ \ \ \ |\gradh^jg(\cdot,t)|^2 \leq \frac{c_j(h,a,\ep)}{t^{ j}}  \tag{${\rm c}_t$}  \label{c_t2} 
\end{align}
  for all  $x \in M  , \ t \in [0,T],$ 
where $c_j(h,\ep,a) \to 0$ as $\ep \to 0, $ and 
\begin{align}
 & \int_{B_1(x)} (|g(0) - g(t)|^2 + |\gradh(g(0) - g(t))|^2 + |\gradh^2(g(0) - g(t))|^2)dh \to 0  \tag{${ d}_t$}  \label{d_t2}  \\
& \mbox{ \rm as }  t\downto 0 \mbox{ \rm for all }  x \in M \nonumber
\end{align}
The solution is unique in the class of solutions which satisfy   \eqref{a_t2}, \eqref{b_t2}, \eqref{c_t2}, and \eqref{d_t2}.
The solution also satisfies the local estimates
\begin{align}
 &\sup_{x\in B_1(x_0)} |\gradh^jg(\cdot,t)|^2t^j   \to  0 \mbox{ for }  t \to 0  \tag{${\rm e}_t$} \label{e_t2}  
 \end{align}
 and for all $ 1<R\leq 2$ there exists a $V(a,R)>0$ such that 
 \begin{align}
 & \int_{B_1(x_0)}  (|\gradh g(\cdot,t)|^2 +   |\gradh^2 g(\cdot,t)|^2)dh \tag{${\rm f}_t$} \label{f_t2}   \\ 
 & \leq
 \int_{B_{R}(x_0)} ( |\gradh g(0)(\cdot)|^2 +   |\gradh^2 g(0)(\cdot)|^2 )dh + V(a,R)t  \nonumber
\end{align}
for all $x_0\in M$, $2\geq R>1$  for all $t \leq T.$
\end{thm}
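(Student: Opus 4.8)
\noindent{\it Proof plan.}
The plan is to construct the solution by smooth approximation of the initial datum together with a collection of a priori estimates that are uniform along the approximation, and then to obtain the flow as a limit. Using \eqref{hassumptionsscaled} I would first fix a locally finite cover of $M$ by geodesic (or $h$-harmonic) coordinate charts defined on $h$-balls of radius close to $1$, on each of which $h$ is $C^4$-close to the euclidean metric; in these charts a uniform local Sobolev inequality holds and the integral quantities in \eqref{scaleinvariant} and in the statement are equivalent, up to constants depending on $a$, to their euclidean counterparts. Then I would mollify $g(0)$ to produce smooth Riemannian metrics $g_i(0)\to g(0)$ in $W^{2,2}_{loc}$ that are $2a$-fair to $h$ and satisfy \eqref{b2} with $\ep$ replaced by $2\ep$ for $i$ large, and run the smooth $h$-Ricci DeTurck flow \eqref{Meq} from each $g_i(0)$: a smooth solution $g_i(t)$ exists for a short time by the quasilinear parabolic structure of \eqref{Meq} (or by Theorem \ref{C0RicciDeturckthm}, since $g_i(0)$ is smooth and fair to $h$). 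Everything then reduces to proving estimates on $g_i(t)$ that are uniform in $i$ and hold up to a time $T=T(a,\ep)>0$.

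The two estimates that have to be established together, and which form the core of the argument, are (I) the uniform fairness bound $\tfrac{1}{400a}h\leq g_i(t)\leq 400a\,h$ on $[0,T]$ and (II) the preservation of local energy smallness, i.e.\ \eqref{smallenergy}, equivalently \eqref{b_t2}. For (II) I would mimic the Struwe-type monotonicity argument from harmonic map heat flow: testing the evolution of $|\gradh g_i|^2+|\gradh^2 g_i|^2$ against a spatial cutoff supported in a unit ball, the parabolic part of \eqref{Meq} produces a favourable term of order $-\int\phi\,|\gradh^3 g_i|^2$, while every remaining term is, after integration by parts and use of the fairness bound and the local Sobolev inequality, bounded by a small multiple (tending to $0$ with $\ep$) of the energy plus a constant depending only on $a$; the hypothesis $n=4$ enters decisively here, since the gradient nonlinearities of \eqref{Meq} are exactly energy-critical for $W^{2,2}$ in dimension four (the energy in \eqref{scaleinvariant} is scale invariant precisely when $n=4$). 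For (I) I would argue by contradiction and scaling as in the discussion around \eqref{scaleinvariant}: if the fairness bound first fails at some $t_0\leq T$, rescale by $1/t_0$ so that it first fails at time $1$; interior parabolic estimates give \eqref{gradients}, i.e.\ $|\gradh^k g_i(t)|^2\leq c_k(a)/t^k$ for $t\leq1$, and combining this with (II) yields $|\gradh g_i(1)|^2\leq c(a,\ep)$ and, by integrating $\partt\int_{B_{h}(x,2)}|g_i-h|^2$ from $0$ to $1$, the bound $\int_{B_{h}(x,2)}|g_i(1)-h|^2\leq c_1 a^2$; the local Sobolev inequality then upgrades this to the pointwise bound $|g_i(1)-h|^2\leq c_2 a^2$, which for $\ep\leq\ep_1(a)$ small contradicts the failure of fairness with constant $400a$. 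The derivative decay \eqref{c_t2}, with $c_j(h,a,\ep)\to0$ as $\ep\to0$, then follows by parabolic bootstrapping as in Theorem \ref{c0_curvature_control}, interpolated against the small local energy \eqref{b_t2}.

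With these uniform estimates, parabolic (Arzel\`{a}--Ascoli) compactness produces a subsequence $g_i(t)\to g(t)$ in $C^\infty_{loc}(M\times(0,T])$, and $g(t)$ is a smooth solution of \eqref{Meq} inheriting \eqref{a_t2}, \eqref{b_t2}, \eqref{c_t2}. For the initial condition \eqref{d_t2} I would integrate in time the evolution equation for $\int_{B_1(x)}(|g_i(t)-g_i(0)|^2+|\gradh(g_i(t)-g_i(0))|^2+|\gradh^2(g_i(t)-g_i(0))|^2)$, bounding it by $C(a,\ep)t$ plus quantities that vanish as $\ep\to0$ using \eqref{b_t2} and \eqref{c_t2}, and then use $g_i(0)\to g(0)$ in $W^{2,2}_{loc}$ to conclude $g(t)\to g(0)$ in $W^{2,2}_{loc}$. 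Estimate \eqref{e_t2} is the pointwise analogue of \eqref{c_t2} on balls $B_1(x_0)$, obtained from localised parabolic smoothing interpolated with \eqref{b_t2}; and \eqref{f_t2} is a localised energy monotonicity, obtained by computing $\partt\int_{B_R(x_0)}\phi(|\gradh g|^2+|\gradh^2 g|^2)$ for a cutoff $\phi$, observing that the leading term is the favourable term $\sim-\int\phi\,|\gradh^3 g|^2$ and that all other terms are bounded by $V(a,R)$ via fairness and \eqref{b_t2}, and integrating from $0$ to $t$. For uniqueness, given two solutions $g^{(1)},g^{(2)}$ in the stated class, I would put $w=g^{(1)}-g^{(2)}$ and derive from \eqref{Meq} a differential inequality of the form $\partt\int_{B_1(x)}\phi|w|^2\leq-(\text{favourable term})+\big(C(a)+\tfrac{C(a)\,\si(\ep)}{t}\big)\int_{B_2(x)}|w|^2+\dots$, in which the singular-in-time coefficient carries the small factor $\si(\ep)\to0$ coming from \eqref{c_t2}; summing over the cover and using $w\to0$ in $W^{2,2}_{loc}$ as $t\downto0$ from \eqref{d_t2}, a Gronwall argument (the $\si(\ep)/t$ term being absorbable for $\ep$ small, as in the uniqueness argument of the $C^0$ case) forces $w\equiv0$.

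I expect the main obstacle to be the simultaneous proof of (I) and (II). The initial fairness constant $a$ is \emph{arbitrary}, so --- in contrast with the perturbative $C^0$ theory of Theorem \ref{C0RicciDeturckthm} --- the bound $g_i(t)\leq 400a\,h$ cannot be read off from a maximum principle; closing it requires the scaling/contradiction argument above, which itself consumes the local energy estimate (II), while (II) is only useful as long as the fairness bound persists. Untangling this circular dependence while tracking all constants --- and in particular verifying the smallness $\si(\ep),c_j(h,a,\ep)\to0$ as $\ep\to0$ that is needed both for \eqref{c_t2} and for the uniqueness argument --- is the delicate point, and the restriction $n=4$ is precisely what makes the governing energy estimate scale-critical and hence closable.
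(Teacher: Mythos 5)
Your plan follows essentially the same route as the paper's (and Lamm/Simon's) argument: reduce to charts where $h$ is nearly euclidean with a local Sobolev inequality, prove a Struwe-type preservation of local energy smallness for the $h$-Ricci DeTurck flow coupled with the fairness bound, close the fairness bound by the scaling/contradiction argument using the interior gradient estimates and the integrated bound on $|g-h|^2$ (with the decisive scale-criticality of the energy in dimension four), and then obtain \eqref{c_t2}--\eqref{f_t2} and uniqueness by parabolic smoothing, limits of smooth approximations, and a Gronwall argument with the small $\si(\ep)/t$ coefficient. This matches the paper's proof sketch, so no substantive comparison is needed.
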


In the  paper \cite{JiShZh}, the initial metric is assumed to be in $W^{1,p}$ with $p>n$. This enables the authors to prove  stronger time dependent estimates on the spatial gradients for solutions to the Ricci DeTurck flow, than those appearing in \eqref{gradients} :
  for example they show 
\begin{eqnarray*}
&&|\grad g|^2(t) \leq \frac{C}{t^{n/p}} \\
&& |\grad^2 g|^2(t) \leq \frac{C}{t^{\frac{n}{2p} + \frac 3 2 }} \\
\end{eqnarray*}
and we note that   $n/p <1$  and $\frac{n}{2p} + \frac{3}{2} < 2 $ : compare to  the case $k=1$ and $k=2$ in  \eqref{gradients}.

The next part of the paper  \cite{LammSimon}  is concerned with translating these results to the Ricci flow. That is one considers a 
{\it Ricci flow related solution}   $\ell(t):= (\Phi_t)^{*}g(t)$ where $\Phi_t$ is the family of diffeomorphisms solving the  
 the following ordinary differential equation:
 \begin{align}
    \partt {\Phi}^{\al}(x,t) =& V^{\al}(\Phi(x,t),t),  \ \ \ \text{for all}\ \  
    (x,t)\in M^n \times I,\nonumber \\
     \Phi(x,S)=& x.  \label{ODEDe2}
  \end{align}
  where $ {V}^{\al}(y,t) := -{g}^{\be \ga} \left({\up{g}
      \Gamma}^{\al}_{\be \ga} - {\up{h} \Gamma}^{\al}_{\be \ga}\right)
  (y,t)$

\begin{thm}\label{WeakRicciStart}
Let $1<a<\infty$ , $M= M^4$ be a four dimensional manifold, and $g(0)$ and $h$ satisfy  the assumptions \eqref{hassumptionsscaled}, \eqref{a2} and \eqref{b2}, with $\ep \leq \ep_1$ where $\ep_1= \ep_1(a)>0$ is the constant coming from Theorem \ref{main1_start}, and let 

$(M,g(t))_{t\in (0,T]}$  be the   smooth solution to  \eqref{Meq} 
constructed in Theorem \ref{main1_start}.
Then \begin{itemize}

\item[(i)] 
there exists a constant $c(a)$ and a smooth solution  $\Phi :M \times (0,T] \to M$ to   \eqref{ODEDe2} with 
$\Phi(T/2) = Id$ such that  $\Phi(t):= \Phi(\cdot,t) : M \to M$ is a diffeomorphism and 
 $d_h(\Phi(t)(x), \Phi(s)(x)) \leq c(a)  \sqrt {|t - s|}$ for all $x\in M.$  The metrics $\ell(t):= (\Phi(t))^*g(t), t\in (0,T]$ solve the  Ricci flow equation.
 Furthermore there are   well defined limit maps $\Phi(0): M \to M,$  $\Phi(0): = \lim_{t\downto 0} \Phi(t),$
 and $W(0): M \to M,$  $W(0): = \lim_{t\downto 0} W(t),$ where  $W(t)$ is the inverse of $\Phi(t)$ and  
  these limits  are obtained uniformly  on compact subsets, and  $\Phi(0), W(0)$  are  homeomorphisms inverse to one another.

 \item[(ii)] For the Ricci flow solution $\ell(t)$ from (i), there is a value $\ell_0(\cdot) = \lim_{t\downto 0} \ell (\cdot,t) $ well defined up to a set of measure zero, where the limit exists in the $L^p_{loc}$ sense, for any $p \in [1,\infty)$, such that, $\ell_0$ is positive definite, and  in $W_{loc}^{1,2}$ and for any $x_0 \in M$ and $0<s<t\leq T$  we have 
 \begin{eqnarray*}
 && \int_{B_{1}(x_0)}  |\ell(s)-\ell_0|^p_{\ell(t)} d\ell(t) \leq  c(g(0),h,p,x_0) s \cr
 &&  \int_{ B_{1}(x_0)}  |(\ell(0))^{-1}-(\ell(s))^{-1}|^p_{\ell(t)} d\ell(t) \leq c(g(0),h,p,x_0) |s |^{\frac 1 4}  \cr
&& \int_{B_{1}(x_0) )}|\gradg  \ell_0 |^2_{\ell(t)}   d\ell(t) \leq   c(g(0),h,p,x_0) t^{\si} \cr
 && \int_{B_{1}(x_0)} |\Rm(\ell)|^2(x,t) d\ell(x,t)   + \int_0^t \int_{B_{\ell(s)}(x_0,1)}  |\gradg \Rm(\ell)|^2 (x,s) d\ell(x,s) ds   \leq  c(g(0),h,p,x_0) \cr   
 && \sup_{B_{1}(x_0)}  |\gradg^j \Rc(\ell(t))|^2 t^{j+2}     \to 0  \mbox{ as } t \downto 0   \mbox{ for all } j\in \N_0 \cr
\end{eqnarray*}
for a universal constant $\si>0, $
where $\gradg$ refers to the gradient with respect to $\ell(t),$ $c(g(0),h,p,x_0)$ is a constant depending on 
 $g(0),h,p,x_0$ but not on $t$ or $s$.
  \item[(iii)]
 
The  limit maps $\Phi(0): M \to M,$  $\Phi(0): = \lim_{t\downto 0} \Phi(t),$
 and $W(0): M \to M,$  $W(0): = \lim_{t\downto 0} W(t),$ from (i) 
   are also  obtained in the $W_{loc}^{1,p}$ sense for $p\in [1,\infty)$.
 Furthermore,  for any smooth coordinates $\phi :U \to \R^n$, and $\psi: V \to \R^n$  with 
 $W(0)(V) \subsub U,$ 
the functions   $(\ell_0)_{i j} \of W(0):V \to \R $ are in $L_{loc}^p$ for all $p \in [1,\infty)$ and  
$(g(0))_{\al \be}: V \to \R $ and $(\ell_0)_{ij}: U \to \R$ are related by the identity 
$$(g(0))_{\al \be}  = D_{\al} (W(0))^i    D_{\be} (W(0))^j  ( ({\ell}_0)_{i j } \of W(0)),$$
which holds almost everywhere.
In particular: $\ell_0$ is isometric to $g(0)$ almost everywhere through the map $W(0)$ which is in $W_{loc}^{1,p},$
for all $p\in [1,\infty)$.

 \item[(iv)]
 We define 
  $d_t(x,y) = d(g(t))(x,y)$ and $ \ti d_t( p,q)  = d(\ell(t))(p,q)$,   for all  $x,y,p,q \in M,$  $t \in (0,T)$.
 There  are   well defined limit metrics $ d_0,\ti d_0: M \times M \to \R_0^+,$  $d_0(x,y) = \lim_{t\downto 0}  d_t(x,y) $, 
 and $\ti d_0 :=  M \times M \to \R_0^+,$  $\ti d_0(p,q) = \lim_{t\downto 0} \ti d_t(p,q),$ 
 and they satisfy $\ti d_0(x,y) = d_0(\Phi(0)(x),\Phi(0)(y)).$ That is, $ (M, \ti d_0)$ and $(M,d_0)$ are isometric to one another through the map $\Phi(0)$.\\ 
The  metric $d_0$ satisfies $d_0(x,y):= \liminf_{\ep \downto 0} \inf_{\ga \in C_{\ep,x,y}} L_{g(0)}(\ga),$ where 
$ C_{\ep, x,y} $ is the space of {\it $\ep$-approximative Lebesgue curves} between $x$ and $y$ with respect to $g(0)$: This space is defined/examined in Definition 8.2/Section 8 of \cite{LammSimon}.


\end{itemize}

 \end{thm}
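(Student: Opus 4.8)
The plan is to build everything on top of the Ricci DeTurck flow $(M,g(t))_{t\in(0,T]}$ furnished by Theorem \ref{main1_start}, together with its estimates \eqref{a_t2}--\eqref{f_t2}, and to transport the information to the Ricci flow by integrating the DeTurck vector field. First I would analyse the ODE \eqref{ODEDe2}. Since $V^\alpha=-g^{\beta\gamma}(\Gamma(g)^\alpha_{\beta\gamma}-\Gamma(h)^\alpha_{\beta\gamma})$, the two-sided bound \eqref{a_t2} and the gradient estimate \eqref{c_t2} with $j=1$ give $|V(\cdot,t)|_h\le c(a)/\sqrt{t}$, which is integrable in $t$ up to $0$; since $(M,h)$ is complete and $g(t)$ is uniformly comparable to $h$, this produces for each $t\in(0,T]$ a diffeomorphism $\Phi_t:M\to M$ with $\Phi_{T/2}=\mathrm{id}$, and integrating $|V|$ along trajectories gives $d_h(\Phi(t)(x),\Phi(s)(x))\le c(a)\sqrt{|t-s|}$. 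Hence $\Phi_t$ extends continuously to a map $\Phi_0:=\lim_{t\downto 0}\Phi_t$, uniformly on compact sets; running the ODE backwards produces the inverse diffeomorphisms $W_t=\Phi_t^{-1}$ with the same $\sqrt{|t-s|}$ modulus, a limit $W_0$, and passing to the limit in $\Phi_t\circ W_t=W_t\circ\Phi_t=\mathrm{id}$ shows $\Phi_0,W_0$ are mutually inverse homeomorphisms. That $\ell(t):=\Phi_t^*g(t)$ solves Ricci flow is the classical DeTurck computation, because $V$ is exactly the DeTurck vector field. This yields (i).

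For (ii), note that $\ell(t)$ and $g(t)$ are isometric for each $t>0$, so every coordinate estimate of Theorem \ref{main1_start} transfers: the energy bound \eqref{b_t2} and the monotonicity inequality \eqref{f_t2} control $\int_{B_1(x)}|\gradh^2 g(t)|^2$ and hence an $L^2$ bound on curvature, yielding $\int_{B_1(x_0)}|\Rm(\ell)|^2(\cdot,t)\,d\ell(t)+\int_0^t\int_{B_{\ell(s)}(x_0,1)}|\gradg\Rm(\ell)|^2\,ds\le c$, while \eqref{c_t2} and \eqref{e_t2} give the decay $|\gradg^j\Rc(\ell(t))|^2 t^{j+2}\to 0$. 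Writing $\partt\ell=-2\Rc(\ell)$ and integrating in time, the $L^p$ bound on $\Rc(\ell)$ (from the curvature bounds, the uniform ellipticity $\ell\sim h$, and interpolation) produces the Cauchy estimate $\int_{B_1(x_0)}|\ell(s)-\ell_0|^p_{\ell(t)}\,d\ell(t)\le c\,s$, defining $\ell_0$ in $L^p_{loc}$; positive definiteness and the two-sided bound pass to the limit from \eqref{a_t2}, the membership $\ell_0\in W^{1,2}_{loc}$ with the claimed $t^{\sigma}$ energy bound follows from \eqref{f_t2} and lower semicontinuity, and the inverse-metric estimate is analogous once one notes $|(\ell^{-1})'|\lesssim|\ell'|$ on the region $\ell\sim h$.

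For (iii) the key identity is $g(t)=W_t^*\ell(t)$, i.e.\ $(g(t))_{\alpha\beta}=D_\alpha W_t^i\,D_\beta W_t^j\,((\ell(t))_{ij}\circ W_t)$ in coordinates. I would upgrade the convergences to $W_t\to W_0$ in $W^{1,p}_{loc}$ (using the speed bound on $V$ together with the parabolic gradient estimates applied to the linearised flow), $\ell(t)\to\ell_0$ in $L^p_{loc}$, and $g(t)\to g(0)$ in $W^{2,2}_{loc}$ from \eqref{d_t2}, and then pass to the limit in this identity; this is legitimate since $W_0\in W^{1,p}$ for all finite $p$ and $(\ell_0)_{ij}\circ W_0\in L^p_{loc}$ by the area-type estimates just obtained, giving the a.e.\ change-of-variables formula and the isometry of $(M,\ell_0)$ with $(M,g(0))$ through $W_0$. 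Finally, for (iv) I would first establish $d_t\to d_0$ locally uniformly, where $d_t=d(g(t))$: since $g(t)\to g(0)$ in $W^{2,2}_{loc}$, which in dimension four embeds into $W^{1,4}_{loc}$, the restriction of $g(t)$ to almost every line of a fine grid converges in $C^0$ to that of $g(0)$, so on the one hand $d_0(x,y)\le L_{g(0)}(\gamma)$ for every admissible curve $\gamma$, giving $d_0(x,y)\le\liminf_{\ep\downto 0}\inf_{\gamma\in C_{\ep,x,y}}L_{g(0)}(\gamma)$; the reverse inequality is obtained by showing that near-minimising $g(t)$-geodesics can be replaced, uniformly for small $t$, by $\ep$-approximative Lebesgue curves of comparable length, using the curvature and energy bounds to exclude concentration on bad sets. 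Transporting by $\Phi_0$ then gives $\tilde d_0(x,y)=d_0(\Phi_0(x),\Phi_0(y))$, which finishes (iv).

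The main obstacle is this last step: identifying the limit distance $d_0$ with the $\ep$-approximative-Lebesgue-curve length functional of the merely $W^{2,2}$ metric $g(0)$. Unlike the $C^0$ setting of Theorem \ref{C0RicciDeturckthm}, here $d(g(0))$ is not even a priori defined, so one must simultaneously set up the correct notion of admissible curve (the space $C_{\ep,x,y}$ defined in Section~8 of \cite{LammSimon}) and prove it is compatible with the flow; the fact that $W_0$ is only $W^{1,p}$, and not Lipschitz, also makes transporting metric and distance information in (iii)--(iv) delicate.
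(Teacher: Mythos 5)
The survey itself states this theorem without proof, citing \cite{LammSimon}, so your proposal has to be measured against the proof in that paper; your outline does follow the same broad route (ODE for $\Phi$, transfer of the DeTurck estimates, the pullback identity $g(t)=W_t^*\ell(t)$, Lebesgue curves for $d_0$), but several steps you treat as routine are exactly where the work lies, and some are wrong as written. In (i), the curves $t\mapsto W_t(y)$ are \emph{not} integral curves of $-V$: one has $\partial_t W_t(y)=-(D\Phi_t)^{-1}\,V(y,t)$, so "running the ODE backwards" does not give the same $\sqrt{|t-s|}$ modulus from $|V|\le c/\sqrt t$ alone; you need a Gronwall argument on $D\Phi_t$ using $|\gradh V|\le c(\ep)/t$ and the smallness $c_j(h,a,\ep)\to 0$ (yielding at best an exponent $\tfrac12-c(\ep)$), and considerably more work to obtain the uniform and $W^{1,p}_{loc}$ convergence of $\Phi_t$ and $W_t$ asserted in (iii) — in \cite{LammSimon} this occupies Theorem 2.6 and Sections 5--6, whereas you simply assert it. In (iii), passing to the limit in $(g(t))_{\al\be}=D_\al W_t^i D_\be W_t^j\,((\ell(t))_{ij}\circ W_t)$ requires convergence of compositions $(\ell(t))_{ij}\circ W_t$ when $W_t$ is only uniformly $W^{1,p}$ and $\ell(t)\to\ell_0$ only in $L^p_{loc}$; the change-of-variables/area estimates you invoke are precisely what must be proved, not a tool you already have. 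In (ii), the spacetime bound on $\int_0^t\int|\gradg\Rm(\ell)|^2$ is not among the conclusions \eqref{a_t2}--\eqref{f_t2} and needs separate local energy estimates for the curvature, and the rate-$s$ estimate does not follow from "interpolation" plus "$\ell\sim h$" — the latter is false, since $\ell(t)$ is comparable to $\Phi_t^*h$, not to $h$.

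The largest gap is (iv). In this borderline four-dimensional $W^{2,2}$ setting $g(t)$ does \emph{not} converge to $g(0)$ in $C^0_{loc}$, so even the existence of the limit $d_0(x,y)=\lim_{t\downto 0}d_t(x,y)$ is nontrivial and is not delivered by your grid/Fubini remark; and the identification of $d_0$ with $\liminf_{\ep\downto 0}\inf_{\ga\in C_{\ep,x,y}}L_{g(0)}(\ga)$ — both inequalities, including replacing near-minimising $g(t)$-geodesics by $\ep$-approximative Lebesgue curves with controlled length and ruling out concentration on bad sets — is the technical heart of \cite{LammSimon} (Sections 7--9). You correctly flag this as the main obstacle, but the proposal only names the strategy without supplying the estimates, so as a proof it is incomplete at exactly the decisive points.
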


\begin{rem} 
An attempt to  construct  a Ricci flow related solution $\ell(t)$ with $\Phi(0) = Id$ and $\ell(0) = \Phi(0)^*g(0) = g(0)$,  using  similar  methods  to those we  use to construct  the Ricci flow solution in Theorem \ref{WeakRicciStart},  could lead to a    Ricci flow solution,  which does not immediately become smooth. We avoid these {\bf non-smooth gauges} by choosing $\Phi$'s with $\Phi(\cdot,S) = Id$ for some positive time $S>0$ (not $S=0$) in   Theorem \ref{WeakRicciStart}.
See Remark 5.3 in \cite{LammSimon}  for more details.

\end{rem}

In the paper of \cite{LammSimon}  and \cite{JiShZh},    it is shown that when a scalar curvature bound from below, almost everywhere, exists for the weak starting metric $g(0)$ then this is preserved by the Ricci DeTurck flow and hence by the Ricci flow related solution. 
The method in Lamm/Simon involves showing that the weak quantity
$\int_{M} ((\Sc(g(t))-k)_{-})^2 $ remains zero when evolved by the Ricci DeTurck flow, if it is zero at time zero. 
Notice here that when $g(0) \in W^{2,2}_{loc}$ and $\frac{1}{a}h \leq g(0)  \leq ah$ that then the weakly defined quantity
$\Sc(g(0))$ is in $L^2 $ as is the quantity $(\Sc(g(0))-k)_{-}$ and hence $\int_{M} ((\Sc(g(t))-k)_{-})^2 $  is also well defined at time $t=0$.

In the paper of  \cite{JiShZh},   the argument is quite subtle and they show that
the Scalar curvature remains bounded from below  in the distributional sense, if $g(0) \in W^{1,p}$. One of the interesting features there, is that one needs to evolve
the test functions  $\phi$  used when testing  whether   $\Sc(g(t))-k \geq 0$ in the distributional definition of $\Sc(g(t)) \geq k.$  
  
The paper \cite{ChuLee} takes a slightly different approach and uses  a similar argument to the one given by Burkhardt-Guim, \cite{Burk-Guim1, Burk-Guim2} . They  consider  sequences of smooth metrics $g_{i}(0) \to g(0)$ in $W^{1,n}$ as $i\to \infty$ with $\Sc(g_{i}(0))\geq k$ and $g_{i}(0)$ $a$-fair to a fixed smooth metric $h$ (as above). This  then guarantees,  
that the solutions $g_{i}(t)$ with initial value $g_i(0) $ will have $\Sc(g_i(t)) \geq k$ and hence the limiting flow $g(t)$ (which  they show is well defined)  coming out of the weak metric $g(0) \in W^{1,n}$ will have  $\Sc(g(t)) \geq k.$ See \cite{ChuLee} for details.

\noindent{\bf Metrics with lower dimensional singular sets whose scalar curvature is bounded from below on  their  regular sets.}\\
After the first version of this survey was finished, some further results appeared  on smoothing metrics which are smooth away from a  singular set.
   
Burkhardt-Guim  \cite{Burk-Guim3} 
  considers the case that the singular set $S$ has dimension less than 
$n-2-\alpha,$ for some $\alpha>0$ and the metric  $g_0$ is defined on $\R^n$ and is $L^{\infty}$ close to the standard metric, in the sense that $|g_0-\de_0|_{\de_0} \leq \ep$ for an appropriately chosen $\ep$. 
If the scalar curvature is non-negative away from the singular set, then Theorem 1.1 of \cite{Burk-Guim3}
shows, using the Ricci flow and Ricci DeTurck flow, that there are smooth metrics $g_t, t\in (0,1)$ such that $g_t \to g$ in $C^{\infty}(\R^n \backslash S)$ as $t\to 0,$ with $\Sc(g_t) \geq 0$.

In the paper  \cite{LL24},   Lee and Liu consider the case that 
  $(M,h)$ is complete, connected and has  bounded curvature/ covariant derivatives of curvature, and    $g_0$ is a metric in   $W_{loc}^{1,2}$ which  satisfies $ \Lambda^{-1}h \leq g_0 \leq \Lambda h$ for some $\Lambda >0$  and the   Morrey type condition
$\int_{B_h(x_0,r)} |\grad g_0|^2 dh \leq Lr^{n-2 +\de}$   for all $ r\leq 1$ for all $x_0 \in M.$ 
  Proposition 4.2  and Theorem   4.6 of \cite{LL24} then show, that if the scalar curvature of $g_0$ is not less than $a$ away from a singular set  $S$  of co-dimension less than $2+\alpha$ for  some $\alpha>0,$ then a Ricci-DeTurck flow $(g_t)_{t \in [0,T)}$ exists and is smooth for all $t>0$ and has $\Sc(g_t) \geq a$ and $g_t \to g_0$ in $C^0_{loc}$ and $g_t \to g_0$ smoothly away from $S$. See Chapter 4 of \cite{LL24} for details and further applications to smoothing metrics while  keeping scalar curvature bounds from below, and Definition 4.1 of \cite{LL24} for the precise definition of {\it co-dimension} in this setting.

\section{Further weak flows}

In the papers of Kleiner/Lott \cite{KleinerLott1}, \cite{KleinerLott2}, the authors produced and studied  a flow in three dimensions, which can be continued past singularities, the so called {\it Singular  Ricci flow}.
It used  Perleman's/Hamilton's  work (\cite{Per1,Per2, HamFor}) on describing Ricci flows and their behaviour near possibly singular  times. 
The study of the Singular Ricci flow  was continued in the papers  of Bamler/Kleiner  \cite{BamKle1,BamKle2,BamKle3}, and \cite{Lai2}. 
See \cite{BamSurv} for an overview of some of the ideas/results/methods of this theory. 

In the paper \cite{Lai2}, Lai  studied this solution in the context of an   initial non-compact, complete, connected, smooth  Riemannian manifold with initial data being complete and having non-negative Ricci curvature and potentially unbounded norm of curvature. 
With the help of the theory of singular flows,  she showed that a smooth solution    to Ricci flow  starting with   initial values of this type exist. Whether
the solution is complete for all times larger than zero remains an open problem.

Haselhofer \cite{Has} continued the study of  Singular Ricci flows in higher dimensions, under the assumption that a canonical neighbourhood type theorem (see \cite{BamKle1}, \cite{Per1, Per2})  holds  : See \cite{Has} for details.


In the papers \cite{Bam20a,Bam20b, Bam20c} the author considers a class   of flows called {\it metric flows}. 
These occur as limits, for example,  of sequences of Ricci flows
$(M_i,g_i(t))_{t\in (-T_i,0]}$  whose space times $\curlX_i := M_i  \times(-T_i,0]$ do not degenerate in the limit, that is $ T_{\infty}:= \lim_{i\to \infty}T_i ,$ has $T_{\infty}  >0$, and for which 
$\curlN_{x_i,0}(\tau_0) \geq -Y_0 > -\infty,$ where  $\curlN$ is the pointed Nash-Entropy (see  \cite{Bam20a} for more details), as introduced in Section 5 of Perelman's paper \cite{Per1} (the $\log(Z)$ function) and the paper \cite{HN} of Hein/Naber. The limit is described as a {\it metric flow}, and has a regular part $\curlR$ and a singular part $\curlS.$ On $\curlR$ the {\it metric flow} corresponds to a Ricci flow. The parabolic{*}-Minkowski dimension   of the singular space is $\leq n-2$ (see Definition 3.43 of \cite{Bam20b} for more details).
In the case that the limit can be written as a smooth Ricci flow,
then the limiting {\it metric  flow} has $\curlS = \emptyset.$
The limiting {\it metric flow} structure is  given by  ($I = (- T_{\infty},0],)$
$$(\curlX,   \curlt  , (d_t)_{t\in I},  (\nu_{x;s})_{x \in \curlX,s\in I,  s\leq \curlt(x)}),$$
where $\curlt:\curlX \to \R$ is a time function defined on the limiting space-time $\curlX$,  $(\nu_{x;s})\in  \curlP(\curlX_s)$ are  probability measures, $d_t$ is a complete, separable metric defined on the {\it time slices} $\curlX_t = \curlt^{-1}(t)$. In the limiting situation we considered above, 
$d_t$ are the limit of $d(g_i(t))$ and $\nu_{x;s}$ the limits of
 $\nu(i)_{x;s}$
 which are  weightings of  the volume forms with the conjugate heat kernel measured at 
$x=(p,t),$ that is
$$ d\nu(i)_{(p,t);s}(q) = K(g_i)(p,t;q,s)d\vol_{g_i(s)}(q),$$
where  $K(g_i)(p,t;q,s)$ is the heat kernel of the Ricci flow.
See Section 3.7 of \cite{Bam20b} for details.  
The space of {\it metric flows} considered by Bamler are quite general, the topology of the time slices is not necessarily constant in time, and for example, the singular flow of Kleiner/Lott can be seen as a {\it metric flow}  as shown by Bamler in Section 3 of \cite{Bam20b}.  
The metrics and the probability measures
are required to satisfy certain compatibility conditions, which always hold on a super Ricci flow
and are independent of its dimension.
See \cite{Bam-Video} and \cite{BamSurv} for an overview of some of the ideas/results/methods  of this  theory. 

Haslhofer/Naber defined a weak Ricci flow using notions from  Stochastic Calculus. 
In \cite{Has-Nab} 
they showed that a smooth family of Riemannian metrics
$(g(t))_{t\in I}$ evolves by Ricci flow if and only if
a sharp infinite dimensional gradient estimate 
$|\grad_p \E_{(p,t)}[F]| \leq \E_{(p,t)}[|\grad^{\|}F|]$ holds for  a certain class of functions $F$ in the path space of its space-time.
Here $\E_{(p,t)}$denotes the expectation with respect to the Wiener measure of Brownian motion starting at $(p,t)$ and $\grad^{ \|}$
denotes the parallel gradient transport, which is defined via a suitable stochastic parallel transport. The authors proposed this inequality be used to define a weak Ricci flow.
See \cite{Has-Nab} for details.

In the paper \cite{Has-Choi}, 
the authors show   that every non-collapsed {\it metric flow} limit of Ricci flows, as provided by
Bamler's precompactness theorem from the paper \cite{Bam20b}, as well as any  singular Ricci flow in the sense of   Kleiner-Lott, \cite{KleinerLott1},  is a weak solution in the sense of Haslhofer-Naber, \cite{Has-Nab}.
See \cite{Has-Choi} for details.

Sturm considered weak (super)  Ricci flows in the context of 
optimal transport in his paper \cite{sturm2018super}. Munn and Lakzian, \cite{LakzianMunn} looked further at  weak  Ricci flows in such a setting and in particular   Ricci flows  that  can be   extended (weakly)  through neck-pinch     singularities. 
See also \cite{topping2009L},\cite{topping2009ricci}, \cite{McTop} for earlier   related works, where   (Super) Ricci flows are studied in this context.

\section{Open problems}\label{open-problems}

\begin{itemize}
\item[(a)]
Is there an estimate of the type 
$|\Rm(\cdot,t)|\leq \frac{c^2_0}{t}$ if one starts with manifolds which are uniformly non-collapsed and have positive isotropic curvature?
\item[(b)] 
Prove versions of the Theorem \ref{main1_start} of    Lamm/Simon in the $n$-dimensional case: $g_0\in W^{2,\frac n 2}$ and $h$ is $a$-fair to $g(0),$ $h$ smooth, complete, connected with bounded geometry. 
\item[(c)]
Is it possible to prove that the non-negativity of the curvature conditions discussed at the beginning of the paper are preserved along the weak flow of Lamm/Simon? That is,  can one prove that  the condition   $\curlR    \in  \curlC$ at time zero in a weak sense, where $\curlC$ is  one of the curvature conditions  \eqref{curvature-conditions}, continues to hold along the flow constructed by Lamm/Simon ?
\item[(d)]  
Is it possible to prove    versions of the theorems of Simon \cite{SimonCrelle3D},  Bamler/Cabezas-Rivas/Wilking \cite{Bam-Cab-Riv-Wil}, in the setting of Lamm/Simon, Theorem \ref{main1_start} : If 
  $\curlR + \ep I  \in  \curlC$ for some weak metric  $g(0)\in W_{glob}^{2,\frac n 2}$ and $h$ is $a$-fair to $g(0)$,   then is 
   $\curlR(g(t)) + \ep C I  \in  \curlC$ for $t\in [0,S)$ for some uniform $S>0$ depending on $h$ for the Lamm/Simon, \cite{LammSimon}, solution ?
   For general $n\neq 4$ it would first be necessary to solve (b), before considering this question.  
 \item[(e)]  Consider the questions of (c) or (d) in the case that $g(0)\in W_{glob}^{1,n}$ and $g(0) $ is $a$-fair to $h,$ $h$ is $a$-fair to $g(0),$ $h$ smooth, complete, connected with bounded curvature,    and $g(t)$ is the solution constructed  by Chu/Lee \cite{ChuLee}, where  here curvature should be defined in some distributional or some other weak sense.   
\item[(f)] Consider the questions of (c) or (d),  
 in the case that $g(0)\in W^{1,p}(M),$ $M$ closed, smooth, $p>n$  and $g(t)$ is the solution constructed  by
\cite{JiShZh}. Here curvature should be defined in some distributional or some other weak sense.
\item[(g)] 
Let $(X,d)$ be an Alexandrov space of dimension $n =  3 $ with curvature 
not less than $-1,$ which is homeomorphic to a manifold.  Can one find a smooth Ricci flow $(M,g(t))_{t\in (0,T)}$  coming out of $(X,d)$ , that is with $(M,d(g(t))) \to (M,d_0)$ as $t\downto 0,$ $(M,d_0)$ being isometric to $(X,d_X)$ such that $\sec(g(t)) \geq -K$ for some $K\in \R_0^{+}$ (see notes below)?
\item[(h)] 
Assume that $(X,d)$ is an RCD  (${\rm RCD}^{*}$) space   , with   dimension $n\leq  3 $ (or some other   dimension not more than/equal to three)  with Ricci curvature 
not less than $-1,$ in the RCD (${\rm RCD}^{*}$) sense (see the papers  \cite{Gig}, \cite{Cav-Mil} and references therein), which is homeomorphic to a three dimensional manifold. 
Can one find a smooth Ricci flow $(M,g(t))_{t\in (0,T)}$  coming out of $(X,d)$ , that is with $(M,d(g(t))) \to (M,d_0)$ as $t\downto 0,$ $(M,d_0)$ being isometric to $(X,d_X)$ such that $\Ricci(g(t)) \geq -K^2$ for some $K\in \R $? \\
\item[(i)] Let $(M^3,g(0))$ be smooth, complete, connected,  with non-negative Ricci curvature, possibly collapsed at infinity.
 Lai, \cite{Lai2}, produced a smooth Ricci flow $(M^3,g(t))_{t\in [0,T)}$ starting with this initial data. However it is unknown till this date, whether$(M,g(t))$ is complete for $t>0$.
Problems: \\
Is the solution of  Lai  complete at positive  times? \\
Is there some   smooth solution   to Ricci flow with initial value  $(M,g(0))$ which is complete at positive  times?\\
\item[(j)] Let $(M,g(0))$ be smooth, complete, connected, 
with $\curlR(g(0)) -\si \Sc(g(0)) \in \curlC_{IC1}$ for some $\si >0.$ 
Prove the {\bf Generalised PIC1 conjecture of Hamilton:} $(M,g(0))$ is compact or flat.
 \item[(k)] 
 Can one  remove the condition $inj(M,h)>0$ in Theorem \ref{C0Ricciflow-Thm} of Lamm/Simon? Strongly related to this    question is the question :  can we remove the condition $ inj(M,g_i(0))>0 $ from  Theorem \ref{Schlichting} of this paper? 
 \item[(l)] Can one remove the condition $inj(M,h)>0$ in Theorems \ref{main1_start} and \ref{WeakRicciStart} of Lamm/Simon?  
 \item[(m)] Show in the theorem of Chan/Chen/Lee,  Theorem \ref{ChanChenLee}, that $d(g(t)) \to d_0$ locally uniformly as $t\downto 0.$ 
 \item[(n)] Which classes of low regularity metric spaces,  which are limits of spaces with scalar curvature uniformly bounded from below,  can be evolved by the Ricci flow? 
 \item[(o)] Which existing theorems on Ricci flow with non-smooth initial data can be generalised to Ricci flow with boundary? What boundary conditions are necessary for theses generalisations?
 
\end{itemize}
{\bf Notes on the open problems}:
\begin{enumerate}
\item  If one knew, in (g), or (h), that $(X,d_X)$ is compact, $3$ dimensional,   and is given by the Gromov-Hausdorff  limit of smooth, closed,  Riemannian manifolds with a uniform lower bound  on the sectional curvatures or Ricci curvatures,   then one could use Section  \ref{sec-ricci-flow-non-collapsed-non-neg} to construct such a solution.  \\
\item  If one knew, in (g) or (h), that $(X,d_X)$  is almost euclidean everywhere, one could possibly  construct a sort of  flow $(M,g(t))_{t\in (0,T)}$ coming out of $(X,d_X),$ using the Pseudolocality Theorem of Perelman \cite{Per1} : But there would be  no guarantee, to this date,  that $\sec(g(t)) \geq -K$ or $\Rc(g(t)) \geq -Kg(t)$ holds, for some $K\in \R^{+}$, for all $t\in (0,S)$. \\
 
\item  In case $n=2$, and $(X,d_X)$ is a closed  RCD space then a theorem of Lytcheck/Stadler \cite{LytSta}, shows that $(X,d_X)$  is an Alexandrov space of dimension two. Alexandrov theory then shows that 
there is a sequence of smooth Riemannian spaces $(M_i,g_i(0)$ with $\sec(g_i(0)) \ge -k$ which converge to  $(X,d_X)$  in the Gromov-Hausdorff sense as $i \to \infty.$
Hence, using 
the results from Section \ref{sec-ricci-flow-non-collapsed-non-neg}, there is a solution  $(M^2,g(t))_{t\in (0,T)}$ coming out of $(X,d_X)$. 
See  the paper of Richard \cite{Richard-T-Alex} for more  details and references to  theorems on  Alexandrov spaces required in this procedure.  
\item   In (j), the conjecture doesn't involve a Ricci flow.
However: In the three dimensional case, in \cite{Lott-Ricci-pinched}, Lott used   in the  non-flat, non-compact case, the Ricci pinched Ricci flow solution that  he constructed, and the estimates satisfied by it, in a fundamental way when showing   that $AVR(M,g(0))>0$.
 \item 
With respect to (i):  in the paper \cite{ChauMart} the authors showed that  if one  assumes a  collapsing of the type  
$\frac{\vol(B_{g(0)}(x,r))}{r^3}  \geq f(r)$ for a particular function $f:\R^+ \to \R^+$  where $\lim_{r\to \infty} f(r) = 0,$  then there is a complete solution to Ricci flow for a short time: See \cite{ChauMart} for details.
 
\item  Regarding  (k) and (l): The condition $inj(M,h)>0$ was not assumed in the paper of \cite{ChuLee}.
 \item  Regarding (n). \\
To flow such spaces it most likely will be  useful to understand more about  the regularity of  limits of spaces with scalar  curvature bounded from below, which is a field in itself. 
We refer the reader to the books edited by Gromov/Lawson  \cite{PerSc1}, \cite{PerSc2}, and the  
surveys   of Gromov \cite{GromFour}, Sormani/Tian/Wang \cite{tian2024oberwolfach}, 
Allen/  Bryden/Kazaras/Perales/Sormani  \cite{Allen},  Sormani \cite{Sormani},   the papers  of Lott \cite{LottSc}, \break   Munteanu / Wang \cite{MuntWang},
  Cecchini/Hanke/Schick\cite{CecHanSch}, Cecchini/Zeidler \cite{CecZei}, Cecchini/Frenck/Zeidler \cite{CecFrZei}, 
  B\"ar/ Hanke \cite{BaeHan}, Lee/Tam \cite{LeeSc}, as  well as the references/ articles  in all of these books/ papers/ surveys. The list of references we have given here is merely a starting list and is   by no means complete.  \\
  It is possible that flowing such spaces may lead to new insights into the regularity of such spaces. For example Ricci flowing the limits of    uniformly non-collapsed three manifolds   with Ricci curvature bounded from below, showed that such spaces are  indeed manifolds (see \cite{SimonCrelle3D}, \cite{SiTo2}). 
  \item regarding (o) : Some results already exist: see for example the references to Ricci flow with boundary given at the beginning of this paper.
\end{enumerate}

\newpage

\end{document}